\theoremstyle{definition}
\newtheorem{definition}{Definition}[section]
\newtheorem{remark}[definition]{Remark}
\theoremstyle{plain}
\newtheorem{theorem}[definition]{Theorem}
\newtheorem{lemma}[definition]{Lemma}
\newtheorem{corollary}[definition]{Corollary}
\newtheorem{problem}[definition]{Problem}
\begin{document}

\title{Membership problems in finite groups}

\author[M.~Lohrey]{Markus Lohrey}
\email{lohrey@eti.uni-siegen.de}
\author[A.~Rosowski]{Andreas Rosowski}
\email{rosowski@eti.uni-siegen.de}
\author[G.~Zetzsche]{Georg Zetzsche}
\email{zetzsche@mpi-sws.org}

\address[Markus~Lohrey, Andreas Rosowski]{Universit{\"a}t Siegen, Germany}
\address[Georg~Zetzsche]{Max Planck Institute for Software Systems, Kaiserslautern, Germany}
\thanks{This work has been supported by the DFG research project LO 748/12-2}

\begin{abstract}
We show that the subset sum problem, the knapsack problem and the rational subset membership problem for permutation groups are {\bf NP}-complete. 
Concerning the knapsack problem we obtain {\bf NP}-completeness for every fixed $n \geq 3$, where $n$ is the number of permutations in the knapsack equation. In other words: membership in products of three cyclic permutation groups is  {\bf NP}-complete.  This sharpens a result of Luks \cite{luks}, which states {\bf NP}-completeness of the membership problem for products of three abelian permutation groups. We also consider the context-free membership problem in permutation groups and prove that it is {\bf PSPACE}-complete but {\bf NP}-complete for a restricted class of context-free grammars where acyclic derivation trees must have constant Horton-Strahler number. Our upper bounds hold for black box groups. The results for context-free membership problems 
in permutation groups yield new complexity bounds for various intersection non-emptiness problems for DFAs and a single context-free grammar.
\end{abstract}

\maketitle

\section{Introduction}

\subparagraph*{\bf Membership problems in groups.}
The general problem that we study in this paper is the following:
Fix a class $\mathcal{C}$ of formal languages. We assume that members of $\mathcal{C}$ have a finite description;
typical choices are the class of regular or context-free languages, or a singleton class $\mathcal{C} = \{L\}$.
We are given a language $L \in \mathcal{C}$ with $L \subseteq \Sigma^*$, a group 
$G$ together with a morphism $h : \Sigma^* \to G$ from the free monoid
$\Sigma^*$ to the group $G$, and a word $w \in \Sigma^*$. 
The question that we want to answer is whether $w \in h^{-1}(h(L))$, i.e., whether 
the group element $h(w)$ belongs to $h(L)$.
One can study this problem under several settings, and each of these settings has a different motivation. First of all, one can 
consider the case, where $G$ is a fixed finitely generated group that is finitely generated by $\Sigma$, and the input consists of $L$.
One could call this problem the $\mathcal{C}$-membership problem for the group $G$.
The best studied case is the {\em rational subset membership problem}, where 
$\mathcal{C}$ is the class of regular languages. 
It generalizes the subgroup membership problem for $G$, a classical decision problem in group theory. Other special cases of the 
rational subset membership problem that have been studied in the past are the submonoid membership problem, the knapsack problem
and the subset problem, see e.g.~\cite{lohrey_2015,MyNiUs14}. It is a simple observation that for the rational subset membership problem
one can assume that the word $w$ (that is tested for membership in $h^{-1}(h(L))$ can be assumed to be the empty word, see \cite[Theorem~3.1]{KaSiSt06}.

In this paper, we study another setting of the above generic problem, where $G$ is a finite group that is part of the input (and $L$ still 
comes from a  languages class $\mathcal{C}$). For the rest of the introduction we restrict to the case, where $G$ is a finite symmetric group $S_m$ (the set of all permutations on $\{1,\ldots,m\}$) that is represented in the input by the integer $m$ in unary representation, i.e., by the word $\$^m$.\footnote{We could 
also consider the case where $G$ is a subgroup of $S_m$ that is given by a list of generators (i.e., $G$ is a permutation group), but this
makes no difference for our problems.}
Our applications only make use of this case, but we 
remark that our upper complexity bounds can be proven in the more general black box setting  \cite{babai} (in particular, one could replace
symmetric groups by matrix groups over a finite field and still obtain the same complexity bounds). Note that $|S_m| = m!$, hence the order
of the group is exponential in the input length. 

\bigskip

\subparagraph*{\bf Membership problems for permutation groups.}

One of the best studied membership problems for permutation groups is the {\em subgroup membership problem}: 
the input is a unary encoded number $m$ and a list of permutations $a,a_1,\dots,a_n \in S_m$, and it is
asked whether $a$ belongs to the subgroup of $S_m$ generated by $a_1,\dots,a_n$.
The well-known Schreier-Sims algorithm solves this problem in polynomial time \cite{Sims70}, and the problem
is known to be in {\bf NC} \cite{BaLuSe87}.

Several generalizations of the subgroup membership problem have been studied. 
Luks defined the $k$-membership problem ($k \geq 1$) as follows: The input is a unary encoded number $m$,
a permutation $a\in S_m$ and a list of $k$ permutation groups $G_1, G_2, \ldots, G_k \leq S_m$ (every $G_i$ is given
by a list of generators). The question is
whether $a$ belongs to the product $G_1 \cdot G_2 \cdots G_k$.
It is a famous open problem whether $2$-membership can be solved in polynomial time. This problem is equivalent to several
other important algorithmic problems in permutation groups: computing the intersection of permutation groups,
computing set stabilizers or centralizers, checking equality of double cosets, see \cite{luks}  for details.
On the other hand, Luks has shown in \cite{luks} that $m$-membership is {\bf NP}-complete for every $k \geq 3$. 
In fact, {\bf NP}-hardness of $3$-membership holds for the special case where $G_1 = G_3$ and $G_1$ and $G_2$
are both abelian. 

Note that the  $k$-membership problem is a special case of the rational subset membership for symmetric groups.
Let us define this problem again for the setting of symmetric groups (here, $1$ denotes the identity permutation and we 
identify a word over the alphabet $S_m$ with the permutation to which it evaluates):

\begin{problem}[rational subset membership problem for symmetric groups]\label{rationalsubsetmemproblem}~\\
Input: a unary encoded number $m \in \mathbb{N}$ and a nondeterministic finite automaton (NFA) $\mathcal{A}$ over the alphabet $S_m$. \\
Question: Does $1 \in L(\mathcal{A})$ hold?
\end{problem}
An obvious generalization of the rational subset membership problem for symmetric groups is the 
{\em context-free subset membership problem for symmetric groups}; it is obtained by replacing the NFA $\mathcal{A}$ 
in Problem~\ref{rationalsubsetmemproblem} by a context-free grammar $\mathcal{G}$.

Two restrictions of the rational subset membership problem that have been intensively studied for infinite groups in recent
years are the {\em knapsack problem} and {\em subset sum problem}, see e.g. \cite{babai2,bell,BergstrasserGZ21,FigeliusGLZ20,FrenkelNU14,KoenigLohreyZetzsche2015a,LOHREY2019,LohreyZ18,MyNiUs14}.
For symmetric groups, these problems are defined as follows (note that the number $n+1$ of permutations is part of the input):

\begin{problem}[subset sum problem for symmetric groups]\label{subsetsumproblem}~\\
Input: a unary encoded number $m \in\mathbb{N}$ and permutations $a,a_1,\dots,a_n \in S_m$.\\
Question: Are there $i_1,\dots,i_n \in \{0,1\}$ such that $a=a_1^{i_1} \cdots a_n^{i_n}$?
\end{problem}
The subset sum problem is the membership problem for the cubes from \cite{babai}.

\begin{problem}[knapsack problem for symmetric groups]\label{knapsackproblem}~\\
Input: a unary encoded number $m \in\mathbb{N}$ and permutations $a,a_1,\dots,a_n \in S_m$.\\
Question: Are there $i_1,\dots,i_n \in \mathbb{N}$ such that $a=a_1^{i_1} \cdots a_n^{i_n}$?
\end{problem}

We will also consider the following restrictions of these problems.

\begin{problem}[abelian subset sum problem for symmetric groups]\label{abeliansubsetsum}~\\
Input: a unary encoded number $m \in\mathbb{N}$ and pairwise commuting permutations $a,a_1,\dots,a_n \in S_m$.\\
Question: Are there $i_1,\dots,i_n \in \{0,1\}$ such that $a=a_1^{i_1} \cdots a_n^{i_n}$?
\end{problem}
The following problem is the special case of Luks' $k$-membership problem for cyclic groups. Note that
$k$ is a fixed constant here.

\begin{problem}[$k$-knapsack problem for symmetric groups]\label{nknapsackproblem}~\\
Input: a unary encoded number $m \in\mathbb{N}$ and $k+1$ permutations $a,a_1,\dots,a_k \in S_m$. \\
Question: Are there $i_1,\dots,i_k \in \mathbb{N}$ such that $a=a_1^{i_1} \cdots a_k^{i_k}$?
\end{problem}

\bigskip

\subparagraph*{\bf Main results.}

Our main result for the rational subset membership problem in symmetric groups is:
\begin{theorem}\label{theoremnpcomplete}
Problems~\ref{rationalsubsetmemproblem}--\ref{abeliansubsetsum} and Problem~\ref{nknapsackproblem} for $k \geq 3$
are {\bf NP}-complete. 
\end{theorem}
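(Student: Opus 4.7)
Problem~\ref{rationalsubsetmemproblem} is the most general of the five: each of Problems~\ref{subsetsumproblem}--\ref{abeliansubsetsum} and Problem~\ref{nknapsackproblem} is an instance of Problem~\ref{rationalsubsetmemproblem} via a small NFA (e.g.\ $a_1^* \cdots a_k^*$ for knapsack, $\{1,a_1\}\cdots\{1,a_n\}$ for subset sum, composed with a transition reading $a^{-1}$ so that the target becomes the identity). Hence the \textbf{NP} upper bound for all five problems follows once Problem~\ref{rationalsubsetmemproblem} is placed in \textbf{NP}. For the lower bounds, Problem~\ref{abeliansubsetsum} is a syntactic restriction of Problems~\ref{rationalsubsetmemproblem}, \ref{subsetsumproblem}, and \ref{knapsackproblem}, while Problem~\ref{nknapsackproblem} with $k=3$ is a restriction of Problems~\ref{rationalsubsetmemproblem}, \ref{knapsackproblem}, and --- by padding with $a_i = 1$ --- of Problem~\ref{nknapsackproblem} for every $k>3$. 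So it suffices to prove \textbf{NP}-hardness of (i) Problem~\ref{abeliansubsetsum} and (ii) Problem~\ref{nknapsackproblem} for $k=3$.

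\textbf{NP upper bound for Problem~\ref{rationalsubsetmemproblem}.} Fix an NFA $\mathcal{A}$ with $n$ states over alphabet $S_m$, initial state $q_0$, accepting state $q_f$. For each state $s$ let $H_s \leq S_m$ denote the subgroup generated by labels of closed walks at $s$ (every submonoid of a finite group is a subgroup). The core combinatorial lemma I would prove is that any accepting run can be reduced, by iteratively excising closed sub-walks at repeated vertices, to the form $L_0\, e_1\, L_1\, e_2 \cdots e_r\, L_r$, where $q_0 = s_0 \to s_1 \to \cdots \to s_r = q_f$ is a simple path (so $r<n$) with transition labels $\sigma_1, \ldots, \sigma_r$ and each $L_i$ is a closed walk at $s_i$; excisions at the same base vertex are absorbed into a single $H_{s_i}$-factor by the subgroup property. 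Consequently
\[
\pi(L(\mathcal{A})) \;=\; \bigcup_P H_{s_0}\, \sigma_1\, H_{s_1}\, \sigma_2 \cdots \sigma_r\, H_{s_r},
\]
with $P$ ranging over simple paths in $\mathcal{A}$. An \textbf{NP} witness then consists of such a $P$ together with permutations $h_0, \ldots, h_r \in S_m$ satisfying $h_0 \sigma_1 h_1 \cdots \sigma_r h_r = 1$, of polynomial total size because $m$ is unary. Verification requires checking $h_i \in H_{s_i}$ for each $i$, which is polynomial-time by Schreier--Sims once generators of $H_{s_i}$ are in hand: fix a BFS spanning tree of the strongly connected component of $s_i$ and for each non-tree edge $e = (u,v,\sigma)$ take $g_e := \pi(\rho(u))\, \pi(\sigma)\, \pi(\rho(v))^{-1}$, where $\rho(\cdot)$ denotes the tree path from $s_i$. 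A short calculation writing $g_e$ as the ratio of the two closed-walk products $\pi(\rho(u)\cdot\sigma\cdot\tau(v))$ and $\pi(\rho(v)\cdot\tau(v))$ (for any back-path $\tau(v)$) shows $g_e \in H_{s_i}$, and a Stallings-type telescoping verifies that the $g_e$ generate $H_{s_i}$.

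\textbf{NP-hardness.} For Problem~\ref{abeliansubsetsum} I reduce from integer \textsc{Subset-Sum}. Given $(a_1, \ldots, a_n, T)$, select distinct primes $p_1, \ldots, p_k$ with $m := \sum_j p_j$ polynomial in the input size and $N := \prod_j p_j > n \max_i a_i$ (achievable via Bertrand's postulate and standard prime-counting bounds). Let $\alpha \in S_m$ be a permutation consisting of disjoint cycles of lengths $p_1, \ldots, p_k$; by the Chinese Remainder Theorem $\langle \alpha \rangle \cong \mathbb{Z}/N$. Then $(\alpha^T;\, \alpha^{a_1}, \ldots, \alpha^{a_n})$ is a YES-instance of Problem~\ref{abeliansubsetsum} iff the original \textsc{Subset-Sum} has a solution, since $N > \sum_i a_i$ rules out wrap-around. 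For Problem~\ref{nknapsackproblem} with $k=3$, I sharpen Luks' \cite{luks} reduction showing \textbf{NP}-hardness of membership in $G_1 G_2 G_3$ for three abelian permutation groups: I would inspect Luks' construction and check that by choosing its internal prime parameters as pairwise coprime primes of polynomial size, each abelian factor $G_i$ can be made cyclic (CRT collapses a direct product of prime-order cyclic groups to a single cyclic group) and realized inside some $S_{m'}$ as $\langle a_i \rangle$ for an appropriate product-of-cycles permutation $a_i$.

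\textbf{Main obstacle.} The principal technical difficulty is the \textbf{NP} upper bound: both the simple-path-with-loops decomposition and, above all, producing a provably correct polynomial-size generating set of $H_s$ in polynomial time (the Stallings-type argument requires care since the inverse paths used to define $g_e$ are not themselves walks of the NFA). The hardness reductions, though demanding attention to the CRT-based encoding and to the exact structure of the abelian groups produced by Luks' reduction, are largely standard adaptations of known techniques.
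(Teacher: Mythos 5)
Your upper bound and your hardness proof for the abelian subset sum problem are essentially sound, though both differ from the paper. For the {\bf NP} membership of Problem~\ref{rationalsubsetmemproblem} you use a ``simple path with loop-subgroups'' double-coset decomposition plus Schreier--Sims; the paper instead normalizes to an NFA with a single initial-equal-final state, extracts generators of the resulting subgroup by the spanning tree technique, shrinks the generating set via Lemma~\ref{lemma-lagrange}, and invokes the reachability theorem --- which is why the paper's bound extends to black box groups while yours is tied to permutation groups (sufficient for the stated theorem). Your worry about inverse edges is in fact resolved by your own ratio argument, and with a BFS out-tree the telescoping uses no inverse traversals at all. For abelian subset sum, your CRT reduction from binary \textsc{Subset-Sum} into a single cyclic group $\langle\alpha\rangle\cong\mathbb{Z}_N$ is a correct classical alternative to the paper's reduction from X3HS into $\mathbb{Z}_3^m$.

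The genuine gap is the {\bf NP}-hardness of $3$-knapsack (Problem~\ref{nknapsackproblem}, $k=3$). You propose to ``inspect Luks' construction and check that by choosing its internal prime parameters as pairwise coprime primes, each abelian factor can be made cyclic via CRT.'' This does not work as stated. Luks' reduction uses abelian groups generated by many generators whose exponents are chosen \emph{independently}; a membership instance for $G_1G_2G_3$ with $G_i=\langle g_{i,1},\dots,g_{i,k}\rangle$ has $3k$ free exponents. If you replace $G_i$ by the cyclic group $\langle g_{i,1}g_{i,2}\cdots g_{i,k}\rangle$ (after arranging coprime orders on disjoint supports), the $k$ exponents in the $i$-th factor collapse to a single integer $z_i$, and the residues $z_i \bmod \mathrm{ord}(g_{i,j})$ are now CRT-synchronized rather than independent. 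The entire correctness argument must therefore be rebuilt so that a single exponent per factor suffices to encode an assignment and so that consistency between the three exponents is \emph{forced}. This is precisely what the paper's proof of Theorem~\ref{knapsackn3} does: it is a from-scratch reduction from X3HS in which each of $g_1,g_2,g_3$ carries one coordinate per element of $A$ and per set in $\mathcal{B}$, and the forcing is achieved through genuinely non-abelian identities about products of cycles of coprime prime lengths (Lemmas~\ref{lemmaba} and~\ref{lemmaequation}, e.g.\ that $[p]^{-x_2}[q]^{x_1}([p][q])^{x_2}=[q]$ pins $(x_1 \bmod q, x_2\bmod p)$ to $(1,0)$ or $(0,1)$). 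None of this is a ``standard adaptation'' of Luks; the paper explicitly presents the passage from abelian to cyclic factors as the result being proved. As written, your argument for this part is a statement of intent rather than a proof, and it is exactly the part where the new ideas are needed.
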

In contrast, we will show that the $2$-knapsack problem can be solved in polynomial time (Theorem~\ref{thm-2-knapsack}).
The {\bf NP} upper bound for the rational subset membership problem will be shown for black-box groups.

\begin{remark}
The abelian variant of the knapsack problem, i.e.,  Problem~\ref{knapsackproblem} with the additional restriction that the permutations
$s_1,\dots,s_n$ pairwise commute is of course the abelian subgroup membership problem and hence belongs to {\bf NC}.
\end{remark}

\begin{remark}
Analogously to the $k$-knapsack problem one might consider the $k$-subset sum problem, where the number $n$ in Problem~\ref{subsetsumproblem}
is fixed to $k$ and not part of the input. This problem can be solved in time $2^k \cdot m^{\mathcal{O}(1)}$ (check all $2^k$ assignments for exponents $i_1, \ldots, i_k$)
and hence in polynomial time for every fixed $k$.
\end{remark}

Finally, for the context-free subset membership problem for symmetric groups we show:
\begin{theorem}\label{theoremPSPACEcomplete}
The context-free membership problem for symmetric groups is {\bf PSPACE}-complete.
\end{theorem}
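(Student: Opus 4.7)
My approach is to establish the PSPACE upper bound and the matching lower bound in turn.

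For the upper bound, the plan is to recast the problem as intersection non-emptiness for a context-free grammar with polynomially many small DFAs. Indeed, $\sigma(w)=1$ in $S_m$ holds iff $\sigma(w)$ fixes every $j\in\{1,\ldots,m\}$; so, letting $D_j$ be the $m$-state DFA with start and accept state $j$ and transitions $k\mapsto\sigma(a)(k)$, we have $\sigma(w)=1$ iff $w\in\bigcap_{j=1}^m L(D_j)$. The membership question thus becomes whether $L(\mathcal{G})\cap L(D_1)\cap\cdots\cap L(D_m)$ is non-empty, and I would attack this via an alternating polynomial-time algorithm that implicitly navigates the (exponentially large) product grammar: for each triple $(A,\bar p,\bar q)$ consisting of a nonterminal and two state tuples, existentially guess a rule of $\mathcal{G}$ for $A$, and for a rule $A\to BC$ existentially guess an intermediate tuple $\bar r$ and universally verify $(B,\bar p,\bar r)$ and $(C,\bar r,\bar q)$.

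The principal obstacle is bounding the alternation depth polynomially, since the minimum-height parse tree in the product grammar can a priori be exponentially deep. To overcome this I would combine two ingredients: (i) a Ruzzo/Greibach-style balancing preprocessing of $\mathcal{G}$ with polynomial blow-up, after which every $w\in L(\mathcal{G})$ admits a parse tree of depth $O(\log|w|)$; and (ii) a singly-exponential bound $N=2^{\mathrm{poly}}$ on the length of a shortest positive witness $w$, extracted from the group structure rather than from the naive (exponential) product grammar. Together these give polynomial alternation depth, and hence a PSPACE algorithm via $\mathsf{APTIME}=\mathsf{PSPACE}$. Step (ii) is the delicate one: I expect one needs to summarise a subderivation by the single permutation in $S_m$ that it evaluates to (a polynomial-size object) rather than by a tuple of $m$ DFA state-pairs, and then argue via a pumping/group-theoretic shortcut that no subderivation of length exceeding the exponential threshold is ever required.

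For PSPACE-hardness I would reduce from acceptance of a polynomial-space-bounded Turing machine (equivalently, corridor tiling). The grammar $\mathcal{G}$ generates strings of the form $C_0\,\#\,\overline{C_0}\,\#\,C_1\,\#\,\overline{C_1}\,\#\,\cdots\,\#\,C_T$ encoding a sequence of machine configurations in a nested/palindromic format that syntactically pairs the $i$-th cell of $C_t$ with the $i$-th cell of $C_{t+1}$; the evaluation map $\sigma$ is then chosen so that $\sigma(w)=1$ iff every such pair is consistent with the local transition rule of the machine, $C_0$ is the initial configuration, and $C_T$ is accepting. The ability of a context-free grammar to align arbitrarily distant positions (unavailable in the rational case) is exactly what drives the complexity up from \textbf{NP} in Theorem~\ref{theoremnpcomplete} to \textbf{PSPACE} here.
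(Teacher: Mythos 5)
Your upper-bound route (reduce to CFG-plus-group-DFA intersection, balance the grammar to logarithmic derivation depth, and run an alternating polynomial-time search over pairs of a nonterminal and a permutation) is a genuinely different strategy from the paper's, but it stands or falls on the claim you yourself flag as delicate: that a shortest witness $w$ with $\phi_G(w)=1$ has length $2^{\mathrm{poly}}$. This bound is true, but the ``pumping/group-theoretic shortcut'' you gesture at does not deliver it. The natural summary of a segment of a root-to-leaf path in a derivation tree is the \emph{pair} of permutations $(\phi_G(x),\phi_G(y))\in S_m\times \hat{S_m}$ contributed to the left and right of the hole (this is exactly the role of $G\times\hat G$ in the paper); contracting a segment is sound only when this pair is $(1,1)$. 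A pumping argument therefore only bounds the height of a minimal derivation by $|N|\cdot(m!)^2$, i.e.\ exponentially, which bounds the yield doubly exponentially --- not enough for polynomial alternation depth. The actual proof of the singly-exponential bound goes through the paper's machinery: the fixpoint $(G_A)_{A\in N}=(\Gamma\Delta)^j(s_0)$ stabilizes after only $j\le 2|N|\log_2|G|$ rounds because subgroup chains in $G\times\hat G$ have length $O(\log|G|)$ (Lagrange), and each round blows up witness length by only a $2^{\mathrm{poly}}$ factor thanks to the reachability theorem. In other words, the missing lemma in your plan is essentially the whole content of the paper's Theorem~\ref{theoremPSPACEblack-box}, so the plan as written has a genuine gap rather than being an alternative proof.

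The lower bound is also problematic. In a corridor-tiling/configuration-sequence reduction you must enforce a conjunction of exponentially many local consistency conditions, and your plan delegates this to the requirement $\phi_G(w)=1$. But $\{w : \phi_G(w)=1\}$ is a group language, and such languages cannot express ``no factor violates a local rule'': a product of non-identity permutations can be the identity, and you cannot afford one independent direct factor of $G$ per tableau cell since there are exponentially many cells. (Consistently with this, intersection non-emptiness for group DFAs alone is only {\bf NP}-complete \cite{BlondinKM16}, so the permutation evaluation cannot carry a Kozen-style tableau check.) The paper sidesteps this entirely: it takes {\bf PSPACE}-hardness from Jerrum's result on the minimum generator sequence problem with binary length bound $\ell$ and observes that $\{1,a_1,\ldots,a_n\}^{\ell}$ has a context-free grammar of size $O(\log \ell)$ by repeated squaring. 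The genuinely group-theoretic hardness thus lives in Jerrum's theorem, and the context-free grammar is used only to count to an exponential $\ell$; I would recommend adopting that reduction rather than attempting a tiling encoding.
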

If we restrict the class of context-free grammars in Theorem~\ref{theoremPSPACEcomplete} we can improve
the complexity to {\bf NP}: A derivation tree of a context-free grammar is called {\em acyclic} if no nonterminal
appears twice on a path from the root to a leaf. Hence, the height of an acyclic derivation tree is bounded by
the number of nonterminals of the grammar. The {\em Horton-Strahler number} $\text{hs}(t)$ of a binary tree $t$ (introduced by Horton and Strahler in the context of hydrology \cite{Ho45,Stra52}; see 
 \cite{EsparzaLS14} for a good survey emphasizing the importance of Horton-Strahler numbers in computer science)  is recursively
defined as follows: If $t$ consists of a single node then $\text{hs}(t)=0$. Otherwise, assume that $t_1$ and $t_2$ are the subtrees 
rooted in the two children of the node. If $\text{hs}(t_1) = \text{hs}(t_2)$ then $\text{hs}(t) = 1 + \text{hs}(t_1)$, and if $\text{hs}(t_1) \neq \text{hs}(t_2)$ then 
$\text{hs}(t) = \max\{\text{hs}(t_1), \text{hs}(t_2)\}$.
For $k \geq 1$ let $\text{CFG}(k)$ be the set of all context-free grammars in Chomsky normal form (hence, derivation trees are
binary trees if we ignore the leafs labelled with terminal symbols) such that every acyclic derivation tree has Horton-Strahler number at most $k$.

\begin{theorem}\label{theoremStrahler}
For every $k \ge 1$, the context-free membership problem for symmetric groups restricted to context-free grammars from $\text{CFG}(k)$ is  {\bf NP}-complete.
\end{theorem}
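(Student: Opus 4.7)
The NP-hardness of the problem follows immediately from Theorem~\ref{theoremnpcomplete}: starting from any NFA $\mathcal A$ over $S_m$ for Problem~\ref{rationalsubsetmemproblem}, I construct an equivalent right-linear grammar in Chomsky normal form, whose derivation trees are right-leaning and in which every left child is a nonterminal that derives a single terminal (hence a leaf of the ``binary tree ignoring terminal leaves''). Every such tree has Horton--Strahler number~$1$, so the grammar lies in $\text{CFG}(1) \subseteq \text{CFG}(k)$, and the NP-hardness transfers.

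The bulk of the work is the NP upper bound. The plan rests on a structural lemma: in any grammar in $\text{CFG}(k)$, every acyclic derivation tree has at most $O(N^k)$ leaves, where $N$ is the number of nonterminals. I will prove by induction on the height $h$ that a binary tree of height at most $h$ and Horton--Strahler number at most $k$ has at most $g(h,k)=\sum_{j=0}^k \binom{h}{j}$ leaves, which satisfies the recurrence $g(h,k)=g(h-1,k)+g(h-1,k-1)$ forced by the two possibilities for the children's Strahler numbers at the root (two children of Strahler $k-1$, or one of Strahler $k$ and one of strictly smaller Strahler). Since every root-to-leaf path in an acyclic derivation tree contains pairwise distinct nonterminals, the height is bounded by $N$, giving the polynomial bound.

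Equipped with this lemma, the NP algorithm non-deterministically constructs a derivation tree whose yield evaluates to $1$ in $S_m$. Any derivation tree decomposes into an acyclic \emph{skeleton} plus a collection of \emph{primitive pump cycles}---acyclic partial derivations $A \Rightarrow^* \alpha A \beta$ with no repetition of $A$ on the path from the root to the distinguished $A$-leaf---inserted at skeleton nodes with positive integer multiplicities. The structural lemma guarantees that the skeleton and each pump cycle have polynomial size, and the resulting yield is a product of polynomially many terms of the form $h(\alpha)^n$ and $h(\beta)^n$ with $n$ encoded in binary. Checking that this product equals the identity in $S_m$ is then a knapsack-type equation, decidable in NP by Theorem~\ref{theoremnpcomplete}.

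The principal obstacle I anticipate is handling \emph{nested pumping}: primitive pump cycles may themselves contain nonterminals whose derivations require further pumping, yielding a recursive structure whose total size is not obviously polynomial. The key step will be to establish a normal form showing that every witnessing derivation admits a polynomial-size description---either by exploiting the bounded Horton--Strahler condition to cap the nesting depth by a function of $k$ and $N$, or by flattening nested pumps into a single layer of primitive pumps paired with a suitably extended system of knapsack constraints, so that the final verification remains in NP.
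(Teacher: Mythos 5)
Your lower bound is fine (a right-linear grammar in Chomsky normal form lies in $\text{CFG}(1)$, which is exactly how the paper derives hardness from the rational subset case), and your counting lemma for trees of bounded Horton--Strahler number is correct and essentially identical to Lemma~\ref{lemma-strahler} (your binomial bound $\sum_{j\le k}\binom{h}{j}$ even slightly sharpens the paper's $d^s$). The gap is in the upper bound, and it sits precisely at the point you flag as ``the principal obstacle'' --- but the difficulty is worse than nested pumping, and neither of your two proposed fixes can close it. The set of pairs arising from cycles at a nonterminal $A$, namely $G_A=\{(\phi_G(u),\phi_G(v)) \mid A\Rightarrow^*_{\mathcal G} uAv\}$, is a \emph{subgroup} of $G\times\hat G$ generated by the primitive pump cycles, of which there may be exponentially many. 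A witnessing derivation inserts different primitive cycles interleaved in an arbitrary order, so its yield is an arbitrary product of (conjugated) generators of $G_A$, \emph{not} a product of polynomially many powers $h(\alpha)^n\cdots h(\beta)^n$. Consequently there is no polynomial-size knapsack instance to reduce to; and even for a single pump the exponents of $\alpha$ and $\beta$ are correlated, which is not an instance of Problem~\ref{knapsackproblem}. ``Flattening nested pumps into a single layer of knapsack constraints'' therefore cannot work as stated, and capping the nesting depth by a function of $k$ and $N$ is not available either: the Horton--Strahler bound controls only \emph{acyclic} trees, not the recursion depth of pumps inside pumps.

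What is actually needed --- and what constitutes the bulk of the paper's proof of Theorem~\ref{theoremStrahler-black-box} --- is group-theoretic machinery in place of the knapsack reduction: (i) the reachability theorem (Theorem~\ref{reachability-theorem}) and Lemma~\ref{lemma-lagrange}, which let one represent each subgroup $G_A\le G\times\hat G$ by a nondeterministically guessed generating set of size $O(\log|G|)$ and decide membership in it in {\bf NP} (Lemma~\ref{lemma-gwp-black-box}); and (ii) a fixed-point iteration of two operators, $\Delta$ (evaluate polynomial-size acyclic derivation trees with the current cycle subgroups plugged in --- this is where your Horton--Strahler lemma is used) and $\Gamma$ (recompute the cycle subgroups via a spanning-tree argument on an implicitly given NFA), which handles arbitrary nesting because Lagrange's theorem bounds the number of iterations until stabilization by $2|N|\log_2|G|$. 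Without some substitute for this subgroup/fixed-point structure, your skeleton-plus-pumps decomposition does not yield a polynomial-size certificate, so the {\bf NP} upper bound is not established.
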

Note that this result generalizes the statement for the rational subset membership problem in Theorem~\ref{theoremnpcomplete} since 
every regular grammar (when brought into Chomsky normal form) belongs to $\text{CFG}(1)$. Also linear context-free grammars belong to $\text{CFG}(1)$.
We remark that Theorem~\ref{theoremStrahler} is a promise problem in the sense that {\bf coNP} is the best upper complexity bound 
for testing whether a given context-free
grammar belongs to $\text{CFG}(k)$ that we are aware of; see the appendix.

The upper bounds in Theorems~\ref{theoremnpcomplete}, \ref{theoremPSPACEcomplete}, and \ref{theoremStrahler} 
will be actually shown for black box groups.

\bigskip

\subparagraph*{\bf Application to intersection non-emptiness problems.}

We can apply Theorems~\ref{theoremPSPACEcomplete} and \ref{theoremStrahler} to intersection non-emptiness problems.
The {\em intersection non-emptiness problem for deterministic finite automata} (DFAs) is the following problem:

\begin{problem}[intersection non-emptiness problem for DFAs]~\\
Input: DFAs $\mathcal{A}_1, \mathcal{A}_2, \ldots, \mathcal{A}_n$ \\
Question: Is $\bigcap_{1 \le i \le n} L(\mathcal{A}_i)$ non-empty?
\end{problem}
Kozen \cite{Kozen77} has shown that this problem is {\bf PSPACE}-complete.
When restricted to group DFAs (see Section~\ref{sec-prel}) the 
intersection non-emptiness problem was shown to be {\bf NP}-complete by Blondin et al.~\cite{BlondinKM16}. 
Based on Cook's characterization of {\bf EXPTIME} by polynomially space bounded AuxPDAs \cite{Cook71},
Swernofsky and Wehar \cite{SwernofskyW15} showed that the intersection non-emptiness problem is {\bf EXPTIME}-complete\footnote{The intersection non-emptiness problem becomes undecidable if one allows more than one context-free grammar.}
for a list of general DFAs  and a single context-free grammar; see also \cite[p.~275]{HeussnerLMS10} and see \cite{EsparzaKS03} for a related
{\bf EXPTIME}-complete problem.
Using Theorems~\ref{theoremPSPACEcomplete} and \ref{theoremStrahler} we can easily show the following new results:
\begin{theorem}\label{theorem-intersection-CFL-DFA(k)}
The following problem is {\bf NP}-complete for every $k \geq 1$:

\medskip
\noindent
Input: A list of group DFAs $\mathcal{A}_1, \mathcal{A}_2, \ldots, \mathcal{A}_n$ and a context-free grammar $\mathcal{G} \in \text{CFG}(k)$.\\
Question: Is $L(\mathcal{G}) \cap \bigcap_{1 \le i \le n} L(\mathcal{A}_i)$ non-empty?
\end{theorem}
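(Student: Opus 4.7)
My plan is to establish the two bounds separately, reducing each direction to a previously established theorem.

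\textbf{NP-hardness.} I would reduce from the rational subset membership problem (Problem~\ref{rationalsubsetmemproblem}), which is \textbf{NP}-hard by Theorem~\ref{theoremnpcomplete}. Given an NFA $\mathcal{A}$ over $S_m$, view $\mathcal{A}$ as a right-linear (hence linear) grammar; by the remark following Theorem~\ref{theoremStrahler} this lies in $\text{CFG}(1) \subseteq \text{CFG}(k)$. To express ``the product in $S_m$ equals $1$'' as an intersection of group DFAs, introduce $m$ group DFAs $\mathcal{D}_1,\dots,\mathcal{D}_m$ over the alphabet $S_m$: $\mathcal{D}_i$ has state set $\{1,\dots,m\}$, both initial and unique final state $i$, and transitions given by the natural action (each letter $\pi$ sends $j$ to $\pi(j)$). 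A word $w \in S_m^*$ lies in $\bigcap_i L(\mathcal{D}_i)$ iff its product fixes every point, i.e.\ equals $1$. Hence $L(\mathcal{A}) \cap \bigcap_i L(\mathcal{D}_i) \neq \emptyset$ iff $1 \in h(L(\mathcal{A}))$, giving the reduction.

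\textbf{NP upper bound.} I would reduce to the context-free membership problem for symmetric groups restricted to $\text{CFG}(k)$, which lies in \textbf{NP} by Theorem~\ref{theoremStrahler}. Given $\mathcal{G} \in \text{CFG}(k)$ over $\Sigma$ and group DFAs $\mathcal{A}_i$ with transition morphisms $h_i\colon \Sigma^* \to S_{m_i}$, initial states $q_i$, and final state sets $F_i$, set $m = \sum_i m_i$ and embed $S_{m_1}\times\cdots\times S_{m_n}$ block-diagonally into $S_m$. The nondeterministic algorithm first guesses $f_i \in F_i$ for each $i$; then in polynomial time (Schreier--Sims, or Babai's algorithms in the black-box setting) it computes a permutation $\rho_i$ with $\rho_i(q_i) = f_i$ and generators of the point stabilizer $H_i = \text{Stab}_{G_i}(q_i)$, where $G_i = \langle h_i(\Sigma)\rangle$. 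I then build a grammar $\mathcal{G}''$ over the alphabet $S_m$ with top-level productions $S' \to \hat{\rho}^{-1} R$ and $R \to S H$: here $\hat{\rho}^{-1}$ is the single terminal acting as $\rho_i^{-1}$ on block $i$; the symbol $S$ keeps the productions of $\mathcal{G}$ with each original terminal $a$ replaced by its block-diagonal image $\hat{h}(a) \in S_m$; and $H$ is a right-linear nonterminal with productions $H \to \hat{g} H \mid \hat{1}$, where $\hat{g}$ ranges over block-diagonal embeddings of the generators of each $H_i$ and their inverses. A block-by-block computation shows that $1$ lies in the product image of $L(\mathcal{G}'')$ iff some $w \in L(\mathcal{G})$ satisfies $h_i(w)(q_i) = f_i$ for all $i$: the prefix $\hat{\rho}^{-1}$ cancels the shift from $q_i$ to $f_i$, and the trailing $H$ absorbs the coset slack given by the stabilizer.

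\textbf{Controlling the Horton--Strahler number.} The main obstacle is to verify that $\mathcal{G}'' \in \text{CFG}(k)$, so that Theorem~\ref{theoremStrahler} genuinely applies. Taking the wrapper nonterminals $S', R, H$ (and the auxiliary terminal-carrying nonterminals introduced during Chomsky-normalisation) fresh and disjoint from those of $\mathcal{G}$, any acyclic derivation tree of $\mathcal{G}''$ decomposes into (i) the fixed-size binary wrapper at the top, (ii) an acyclic derivation of $\mathcal{G}$ rooted at $S$ with Horton--Strahler number at most $k$, and (iii) the $H$-subtree. Crucially, after Chomsky-normalising the only non-recursive production at $H$ is $H \to \hat{1}$: any production of the form $H \to Y_g H$ would force $H$ to appear twice on some root-to-leaf path. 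Hence the $H$-subtree in any acyclic derivation is a single leaf of Horton--Strahler number $0$, and combining these contributions through the constant-size wrapper yields $\text{hs}(\mathcal{G}'') \leq k$, placing $\mathcal{G}''$ in $\text{CFG}(k)$ as required. Note that $L(\mathcal{G}'')$ itself is not confined to the acyclic regime: the cyclic (recursive) derivations of $H$ generate all of $H_1 \times \cdots \times H_n$, which is exactly what the correctness argument needs.
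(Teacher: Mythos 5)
Your proposal is correct in outline and both directions do reduce to results available in the paper, but the route is genuinely different from --- and considerably heavier than --- the paper's. For the upper bound the paper exploits one observation that your construction works hard to avoid: the witness permutation itself is a polynomial-size object. One replaces each terminal $a$ by the single permutation $\pi_a \in S_Q$ acting simultaneously on the disjoint union $Q = \bigcup_i Q_i$ of all state sets, obtaining a grammar $\mathcal{G}'$ over $S_Q$ that trivially has the same derivation trees as $\mathcal{G}$ and hence lies in $\text{CFG}(k)$; then the algorithm simply \emph{guesses} a permutation $\pi \in S_Q$, checks $q_{i,0}^{\pi} \in F_i$ directly, and verifies $\pi \in L(\mathcal{G}')$ via Theorem~\ref{theoremStrahler}. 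No point stabilizers, no Schreier--Sims, no grammar surgery, and no re-verification of the Horton--Strahler bound are needed. For the lower bound the paper merely observes that hardness already holds with $L(\mathcal{G}) = \Sigma^*$ by \cite{BlondinKM16}; your reduction from Problem~\ref{rationalsubsetmemproblem} via the point-action DFAs $\mathcal{D}_1,\dots,\mathcal{D}_m$ is also valid (it is essentially the reduction the paper uses for the {\bf PSPACE} lower bound of Theorem~\ref{theorem-intersection-CFL-DFA}), just less economical.

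Your stabilizer--coset construction can be made to work, and your Horton--Strahler analysis of $\mathcal{G}''$ is sound (in an acyclic tree the $H$-node is forced to use $H \to \hat 1$, so the wrapper adds nothing above $k$). But as written there is a coset-side slip. Under the paper's left-to-right multiplication convention ($q^{ab} = (q^a)^b$, matching how a DFA consumes a word), the condition ``reading $w$ from $q_i$ reaches $f_i$'' is $h_i(w) \in \mathrm{Stab}_{G_i}(q_i)\,\rho_i$, a \emph{right} coset of the stabilizer of the initial state. Your words $\hat\rho^{-1}\,\hat h(w)\,\hat\eta$ with $\eta_i \in \mathrm{Stab}_{G_i}(q_i)$ evaluate to $1$ iff $h_i(w) = \rho_i \eta_i^{-1}$, i.e.\ iff $h_i(w) \in \rho_i\,\mathrm{Stab}_{G_i}(q_i)$, which is a different coset in general ($q_i^{\rho_i\eta_i^{-1}} = f_i^{\eta_i^{-1}} \neq f_i$ unless $\eta_i$ also fixes $f_i$). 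The fix is a one-line change --- let the trailing $H$ generate the stabilizers of the target states $f_i$ rather than of the initial states, or move the $\rho^{-1}$ correction and the stabilizer factor to opposite ends --- but the ``block-by-block computation'' as stated does not yield the claimed equivalence.
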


\begin{theorem}\label{theorem-intersection-CFL-DFA}
The following problem is {\bf PSPACE}-complete:

\medskip
\noindent
Input: A list of group DFAs $\mathcal{A}_1, \mathcal{A}_2, \ldots, \mathcal{A}_n$ and a context-free grammar $\mathcal{G}$.\\
Question: Is $L(\mathcal{G}) \cap \bigcap_{1 \le i \le n} L(\mathcal{A}_i)$ non-empty?
\end{theorem}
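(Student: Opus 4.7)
I would handle both bounds by reducing to/from the context-free membership problem for symmetric groups, which is {\bf PSPACE}-complete by Theorem~\ref{theoremPSPACEcomplete}. For the lower bound, reduce from that problem: given a unary $m \in \mathbb{N}$ and a CFG $\mathcal{G}$ over the alphabet $S_m$, build, for each $i \in \{1, \ldots, m\}$, a group DFA $\mathcal{A}_i$ over the same alphabet $S_m$ with state set $\{1, \ldots, m\}$, initial and unique accepting state $i$, and transitions $\delta_i(q, \sigma) = \sigma(q)$. Each letter acts by permutation, so $\mathcal{A}_i$ is a group DFA, and $\mathcal{A}_i$ accepts $w$ iff the permutation obtained by applying the letters of $w$ in sequence fixes $i$. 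Consequently $\bigcap_{i=1}^m L(\mathcal{A}_i)$ is exactly the set of words evaluating to the identity of $S_m$, and $L(\mathcal{G}) \cap \bigcap_i L(\mathcal{A}_i) \neq \emptyset$ iff $1 \in L(\mathcal{G})$.

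For the upper bound, reduce in polynomial time to the context-free membership problem for symmetric groups. Given group DFAs $\mathcal{A}_i = (Q_i, \Sigma, \delta_i, q_{0,i}, F_i)$ for $i = 1, \ldots, n$ and a CFG $\mathcal{G}$ over $\Sigma$, set $m = \sum_i |Q_i|$ and combine the per-DFA morphisms $\pi_i : \Sigma^* \to S_{Q_i}$ into a single morphism $\pi : \Sigma^* \to S_m$ that acts blockwise on the disjoint union of the $Q_i$. A word $w$ lies in $\bigcap_i L(\mathcal{A}_i)$ iff $\pi(w) \in R \cdot H$, where $H = \prod_i \mathrm{Stab}_{S_{Q_i}}(q_{0,i})$ and $R = \{\prod_i (q_{0,i}\ f_i) : f_i \in F_i\}$. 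The trick is to absorb this union-of-cosets condition into the grammar: add fresh terminals for a polynomial generating set of $H$ (for instance, all transpositions in $Q_i \setminus \{q_{0,i}\}$ for each $i$) and for every transposition $(q_{0,i}\ f)$ with $f \in F_i$, extending $\pi$ to send each new terminal to its corresponding permutation in $S_m$. Let $\mathcal{G}'$ be the CFG whose language is $L(\mathcal{G})$ concatenated with the Kleene-star of the $H$-generators followed by one choice per block $i$ among the transpositions $(q_{0,i}\ f)$. Then $\pi(L(\mathcal{G}')) = \pi(L(\mathcal{G})) \cdot H \cdot R$, and since both $H$ and $R$ are closed under inversion (each element of $R$ is a product of commuting involutions), $1 \in \pi(L(\mathcal{G}'))$ iff $\pi(L(\mathcal{G}))$ meets $R \cdot H$, i.e., iff the original intersection is non-empty. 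Applying Theorem~\ref{theoremPSPACEcomplete} to $\mathcal{G}'$ then yields the {\bf PSPACE} upper bound.

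The main obstacle is this coset mismatch in the upper bound: acceptance by a group DFA corresponds to landing in a (possibly exponentially large) union of cosets of the stabilizer subgroup, while the target problem of Theorem~\ref{theoremPSPACEcomplete} asks only whether the identity is reachable. Handling this via a polynomial-size regular tail whose $\pi$-image is exactly $H \cdot R$, rather than by guessing the $f_i$ and appealing to {\bf NPSPACE} = {\bf PSPACE}, keeps the reduction a clean polynomial-time many-one reduction.
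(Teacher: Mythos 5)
Your lower bound is essentially the paper's: the paper likewise builds $m$ group DFAs sharing one transition function, with $\mathcal{A}_i$ having initial and unique final state $i$, so that the intersection of their languages is exactly the set of words evaluating to the identity. One small caution: you say the DFAs are "over the same alphabet $S_m$", but $|S_m|=m!$, so to keep the reduction polynomial you must (as the paper does) restrict the alphabet to the polynomially many permutations actually occurring as terminals in $\mathcal{G}$, replacing each by a fresh letter.

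Your upper bound takes a genuinely different route. The paper does not produce a many-one reduction: it forms the blockwise grammar $\mathcal{G}'$ over $S_Q$, then \emph{nondeterministically guesses} a permutation $\pi$ with $\pi(q_{i,0})\in F_i$ for all $i$ and checks $\pi\in L(\mathcal{G}')$ via Theorem~\ref{theoremPSPACEblack-box}, relying on {\bf NPSPACE} $=$ {\bf PSPACE} to absorb the guess. You instead absorb the entire union-of-cosets acceptance condition into the grammar by appending a regular tail whose image is the stabilizer product $H$ times the transposition set $R$, which yields a clean polynomial-time many-one reduction to the membership problem of Theorem~\ref{theoremPSPACEcomplete}; this is a nice strengthening (and would also give an alternative, guess-free proof of the upper bound in Theorem~\ref{theorem-intersection-CFL-DFA(k)}). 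There is, however, a coset-side slip to repair. Under the paper's left-to-right convention $q^{\sigma\tau}=(q^{\sigma})^{\tau}$, the map $\sigma\mapsto q_{0,i}^{\sigma}$ is constant on \emph{right} cosets of the stabilizer, so the acceptance set is $H\cdot R$, not $R\cdot H$ (and these differ: e.g.\ for $Q=\{1,2,3\}$, $q_0=1$, $F=\{2\}$, the element $(1\,2)(2\,3)$ lies in $R\cdot H$ but sends $1$ to $3$). Since $1\in\pi(L(\mathcal{G}))\cdot Y$ iff $\pi(L(\mathcal{G}))\cap Y^{-1}\neq\emptyset$, you need $Y=(H\cdot R)^{-1}=R\cdot H$, i.e.\ the tail must produce the transpositions from $R$ \emph{first} and the generators of $H$ \emph{second}, the reverse of the order you give. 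With that reordering (and the convention that the transposition for $f=q_{0,i}$ is the identity), your argument goes through.
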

Table~\ref{table} gives an overview on the complexity of intersection non-emptiness problems. For the intersection non-emptiness problem
for arbitrary DFAs and one grammar from $\text{CFG}(k)$ one has to notice that in the {\bf EXPTIME}-hardness proof from \cite{SwernofskyW15} 
one can choose a fixed context-free grammar. Moreover, every fixed context-free grammar belongs to $\text{CFG}(k)$ for some $k \geq 1$.

\begin{table}[t]
\begin{center}
  \begin{tabular}{ r | c | c | c |  }
                    & no CFG                      & one CFG$(k)$      & one CFG \\ \hline
DFAs           & {\bf PSPACE}-c.~\cite{Kozen77} & {\bf EXPTIME}-c.~for $k$ large enough          & {\bf EXPTIME}-c.~\cite{SwernofskyW15} \\ \hline
group DFAs & {\bf NP}-c.~\cite{BlondinKM16}        & {\bf NP}-c.~for all $k \geq 1$ & {\bf PSPACE}-c. \\
    \hline
  \end{tabular}
\end{center}
\caption{\label{table} Complexity of various intersection non-emptiness problems}
\end{table}

\bigskip

\subparagraph*{\bf Related work.}
Computationally problems for permutation groups
have a long history (see e.g.~the text book \cite{seress03}), and have applications, e.g. for graph isomorphism testing \cite{Babai16,Luks82}.
A problem that is similar to subset sum is the {\em minimum generator sequence problem} (MGS) \cite{EvenG81}: The input consists of
unary encoded numbers $m, \ell$ and a list of permutations $a,a_1,\dots,a_n \in S_m$. The question is, whether
$a$ can be written as a product $b_1 b_2 \cdots b_k$ with $k \leq \ell$ and $b_1, \ldots, b_k \in \{a_1, \ldots, a_n\}$.
The problem MGS was shown to be {\bf NP}-complete in \cite{EvenG81}. For the case, where the number $\ell$ is given
in binary representation, the problem is {\bf PSPACE}-complete \cite{Jerrum85}. This yields in fact the {\bf PSPACE}-hardness in 
Theorem~\ref{theoremPSPACEcomplete}.

Intersection nonemptiness problems for finite automata have been studied intensively in recent years, see e.g. \cite{ArrighiF00JO021,OliveiraW20}.
The papers \cite{birget2000pspace,jack2021complexity} prove {\bf PSPACE}-hardness of the intersection nonemptiness problem
for inverse automata (DFAs, where the transition monoid is an inverse monoid).

Horton-Strahler numbers have been used in the study of context-free languages before, see \cite{EsparzaLS14} for further information and references.

\section{Preliminaries} \label{sec-prel}

\subparagraph*{\bf Groups.}
Let $G$ be a finite group and let $G^*$ be the free monoid of all finite words over the alphabet $G$.
There is a canonical morphism $\phi_G : G^* \to G$ that is the identity mapping on $G \subseteq G^*$.
Throughout this paper we suppress applications of $\phi_G$ and identify words over the alphabet $G$ with the 
corresponding group elements.
For a subset $S \subseteq G$ we denote with $\left\langle S \right\rangle$ the subgroup generated by $S$.
The following folklore lemma is a straightforward consequence of Lagrange's theorem
(if $A$ and $B$ are subgroups of $G$ with $A < B$, then $|B| \geq 2 \cdot |A|$).

\begin{lemma} \label{lemma-lagrange}
Let $G$ be a finite group and $S \subseteq G$ a generating set for $G$. Then, there exists a subset $S' \subseteq S$
such that $\left\langle S' \right\rangle = G$ and  $|S'| \leq \log_2 |G|$.
\end{lemma}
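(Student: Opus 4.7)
The plan is to build $S'$ greedily. Start with $S'_0 = \emptyset$, so that $\langle S'_0 \rangle = \{1\}$ has order $1 = 2^0$. At step $i \geq 0$, if $\langle S'_i \rangle = G$, stop and output $S' := S'_i$. Otherwise, since $S$ generates $G$, we cannot have $S \subseteq \langle S'_i \rangle$ (else $G = \langle S \rangle \subseteq \langle S'_i \rangle$, a contradiction). So pick any $s \in S \setminus \langle S'_i \rangle$ and set $S'_{i+1} := S'_i \cup \{s\}$.

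The key observation is the strict chain of inclusions $\langle S'_0 \rangle < \langle S'_1 \rangle < \cdots$, which is strict by construction (each new $s$ lies outside the previous subgroup). Applying the folklore consequence of Lagrange's theorem recalled just before the lemma (if $A < B$ are subgroups then $|B| \geq 2|A|$), an easy induction on $i$ gives $|\langle S'_i \rangle| \geq 2^i$. Since the orders are bounded by $|G|$, the process must terminate at some step $i^\ast$ with $\langle S'_{i^\ast} \rangle = G$, and $|S'| = i^\ast \leq \log_2 |G|$.

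There is no real obstacle here; the only thing to check carefully is that the selection step is always possible as long as $\langle S'_i \rangle \neq G$, which follows immediately from the assumption that $S$ generates $G$. The argument is purely combinatorial and uses nothing beyond Lagrange's theorem, so it goes through with a single short paragraph of exposition.
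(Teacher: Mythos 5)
Your greedy argument is correct and is exactly the "straightforward consequence of Lagrange's theorem" that the paper alludes to (the paper gives no explicit proof, only the doubling hint $|B| \geq 2|A|$ for $A < B$, which you apply along your strictly increasing chain). Nothing to add.
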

Assume that $G = \langle S \rangle$.
A {\em straight-line program} over the generating set $S$ is a sequence of definitions $\mathcal{S} = (x_i := r_i)_{1 \leq i \leq n}$ where the $x_i$
are variables and every right-hand side $r_i$ is either from $S$ or of the form $x_j x_k$ with $1 \le j,k < i$.
Every variable $x_i$ evaluates to a group element $g_i \in G$ in the obvious way: if $r_i \in S$ then $g_i = r_i$ and if 
$r_i = x_j x_k$ then $g_i = g_j g_k$. We say that $\mathcal{S}$ produces $g_n$. The size of $\mathcal{S}$ is $n$.
The following result is known as the reachability theorem from \cite[Theorem~3.1]{babai}.

\begin{theorem}[reachability theorem] \label{reachability-theorem}
Let $G$ be a finite group, $S \subseteq G$ a generating set for $G$, and $g \in G$.
Then there exists a straight-line program over $S$ of size at most $(1+\log_2 |G|)^2$ that produces the element $g$.
\end{theorem}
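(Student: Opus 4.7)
The plan follows the standard proof of the reachability theorem and proceeds by a two-phase ``cube doubling'' construction. A \emph{cube} over atoms $b_1, \ldots, b_\ell \in G$ is the product set $\{1, b_1\}\{1, b_2\}\cdots\{1, b_\ell\}$.

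In the first phase, I would inductively construct atoms $c_1, c_2, \ldots, c_k \in G$ maintaining the invariant that $|C_i| = 2^i$, where $C_i := \{1, c_1\} \cdots \{1, c_i\}$. This invariant immediately forces $k \leq \log_2 |G|$, and I continue until $C_k = G$. To extend from $C_i$ to $C_{i+1}$ (assuming $C_i \neq G$), I would look for a new atom $c_{i+1} \notin C_i^{-1} C_i$---this is precisely the condition $C_i \cap C_i c_{i+1} = \emptyset$ that makes the cube double in size. Ideally $c_{i+1}$ takes the short form $s \cdot u$ with $s \in S$ and $u$ a cube-word over $c_1, \ldots, c_i$, so that it can be appended to the SLP in at most $i$ new lines (up to $i-1$ multiplications to build $u$ from the earlier atoms, plus one for the product $s \cdot u$). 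Summed over the $k$ atoms, Phase~1 contributes at most $\binom{k+1}{2}$ SLP lines.

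In the second phase, since $C_k = G$, the target element has a cube representation $g = c_1^{\epsilon_1} c_2^{\epsilon_2} \cdots c_k^{\epsilon_k}$ with $\epsilon_i \in \{0, 1\}$, which I would assemble using at most $k - 1$ further multiplications. Adding this to Phase~1 (and a handful of lines that introduce the needed generators from $S$) bounds the overall SLP size by $(1 + \log_2 |G|)^2$, as claimed.

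The one nontrivial point is the extension step of Phase~1: showing that, as long as $C_i \neq G$, an extending atom of cost $O(\log |G|)$ always exists. The starting observation is that if the naive candidate $c_{i+1} = s \cdot u$ (with $s \in S$, $u \in C_i$) fails to escape $C_i^{-1} C_i$ for every pair $(s, u)$, then in particular $S \subseteq C_i^{-1} C_i$ (taking $u = 1 \in C_i$), so $\langle C_i^{-1} C_i \rangle = \langle S \rangle = G$; combined with the trivial bound $|C_i^{-1} C_i| \leq |C_i|^2$ and the flexibility to let $u$ be a longer cube-word, one extracts a witness that escapes $C_i^{-1} C_i$ at a controlled cost. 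Pinning down this combinatorial step and verifying the constants against the sharp $(1 + \log_2 |G|)^2$ bound is where the real work lies; everything else is essentially bookkeeping.
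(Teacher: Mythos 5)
The paper does not prove this theorem at all --- it is quoted verbatim from Babai--Szemer\'edi \cite[Theorem~3.1]{babai} --- so the only question is whether your proposal stands on its own. It does not: the step you yourself flag as ``where the real work lies'' is exactly the content of the theorem, and the scaffolding you build around it is set up in a way that cannot be completed as described. First, the extension step. You propose to find $c_{i+1}\notin C_i^{-1}C_i$ of the form $s\cdot u$ with $s\in S$ and $u\in C_i$. There is no reason such an element exists: $S\cdot C_i$ can easily be contained in $C_i^{-1}C_i$ while $C_i^{-1}C_i\neq G$. The correct form in the standard argument is $c_{i+1}=hs$ with $h\in C_i^{-1}C_i$ and $s\in S\cup S^{-1}$, and the existence proof is a one-line closure argument: $C_i^{-1}C_i$ contains $1$, so if it were closed under right multiplication by $S\cup S^{-1}$ it would contain $\langle S\rangle=G$. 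Your substitute observation --- that failure for all $(s,u)$ gives $S\subseteq C_i^{-1}C_i$ and hence $\langle C_i^{-1}C_i\rangle=G$ --- leads nowhere, because $C_i^{-1}C_i$ is not a subgroup and generating $G$ is no contradiction with $C_i^{-1}C_i\neq G$.

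Second, your termination condition and Phase~2 are wrong. You stop when $C_k=G$, but the doubling can only be continued while $C_k^{-1}C_k\neq G$; it is perfectly possible that $C_k^{-1}C_k=G$ while $C_k\subsetneq G$ (e.g.\ $G=\mathbb{Z}_3$ with $C_1=\{0,1\}$), at which point no further atom outside $C_k^{-1}C_k$ exists. Consequently $g$ need not have a cube representation $c_1^{\epsilon_1}\cdots c_k^{\epsilon_k}$; what one gets is $g=u^{-1}v$ with $u,v\in C_k$. This also changes the cost accounting: each atom costs roughly $2i$ lines (two subproducts of the cube plus an inversion and a product), not $i$, though the total still fits under $(1+\log_2|G|)^2$. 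Finally, note that both the correct extension step and Phase~2 require inversions, which the paper's definition of straight-line program does not provide (only right-hand sides from $S$ or products $x_jx_k$); Babai--Szemer\'edi's SLPs do allow inversion, and any honest proof here must either use that model or explain how to avoid inverses without degrading the bound. None of these points is bookkeeping; they are the theorem.
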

For a set $Q$ let $S_Q$ be the symmetric group on $Q$, i.e., the set of all permutations on $Q$ with composition of permutations as the group operation.
If $Q = \{1,\ldots,m\}$ we also write $S_m$ for $S_Q$.
Let $a \in S_Q$ be a permutation and let $q \in Q$. We also write $q^a$ for $a(q)$.
We multiply permutations from left to right, i.e., for $a,b \in S_Q$, $ab$ is the permutation with
$q^{ab} = (q^a)^b$ for all $q \in Q$. 
A permutation group is a subgroup of some $S_Q$.

Quite often, the permutation groups we consider are actually
direct products $\prod_{1 \leq i \leq d} S_{m_i}$ for small numbers $m_i$.
Clearly, we have $\prod_{1 \leq i \leq d} S_{m_i} \leq S_m$ for $m = \sum_{1 \leq i \leq d} m_i$ and an embedding of
$\prod_{1 \leq i \leq d} S_{m_i}$ into $S_m$ can be computed in polynomial time.

\bigskip

\subparagraph*{\bf Horton-Strahler number.}
Recall the definition of the Horton-Strahler number $\text{hs}(t)$ of a binary tree $t$ from the introduction. We need the following 
simple fact, where we define the height of a binary tree as the maximal number of edges on a path from the root to a leaf.

\begin{lemma} \label{lemma-strahler}
Let $t$ be a binary tree of height $d$ and let $s = \text{hs}(t)$. Then, $t$ has at most $d^s$ many leaves and therefore at most $2 \cdot d^s$
many nodes.
\end{lemma}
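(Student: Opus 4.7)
The plan is to prove the leaf bound by induction on the height $d$, since the recursive structure of both $\text{hs}$ and the notion of height naturally splits through the two subtrees. The base case is $d = 0$: then $t$ is a single node, so it has $1$ leaf and $\text{hs}(t) = 0$, and the claimed bound $d^s = 0^0 = 1$ (with the usual convention) is tight. A parallel degenerate case is $s = 0$, which forces $t$ to be a single node; again the bound holds.

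For the inductive step, I would let $t$ have height $d \ge 1$ with two subtrees $t_1, t_2$ rooted at the children; both subtrees have height at most $d-1$. I then case-split according to the definition of the Horton--Strahler number. If $\text{hs}(t_1) = \text{hs}(t_2)$, then $\text{hs}(t_1) = \text{hs}(t_2) = s - 1$, so both subtrees have Strahler number at most $s-1$. Otherwise $\text{hs}(t_1) \ne \text{hs}(t_2)$ and $\max\{\text{hs}(t_1), \text{hs}(t_2)\} = s$; without loss of generality $\text{hs}(t_1) \le s$ and $\text{hs}(t_2) \le s - 1$. In both cases one subtree has Strahler number at most $s$ and the other at most $s-1$, and both have height at most $d-1$. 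Applying the induction hypothesis to each subtree and adding, the number of leaves of $t$ is at most
\[
(d-1)^s + (d-1)^{s-1} = (d-1)^{s-1}\bigl((d-1) + 1\bigr) = (d-1)^{s-1}\cdot d \le d^{s-1}\cdot d = d^s,
\]
which closes the induction.

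For the second statement, I would invoke the fact that in a binary tree in which every internal node has exactly two children (which matches the definition given for $\text{hs}$, where non-leaf nodes have exactly two subtrees), the total number of nodes equals $2\ell - 1$ where $\ell$ is the number of leaves. Combining this with the leaf bound already proved gives at most $2 d^s - 1 \le 2 \cdot d^s$ nodes.

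There is no real obstacle here: the argument is a straightforward double-bookkeeping induction, and the only thing one has to be careful about is the case analysis in the definition of $\text{hs}$ (making sure that whichever case applies, one subtree drops in Strahler number) and the small algebraic manipulation $(d-1)^s + (d-1)^{s-1} \le d^s$. The boundary convention $0^0 = 1$ and the implicit assumption that internal nodes are binary are the only subtle points.
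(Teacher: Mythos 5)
Your induction is on the height $d$ with the natural two--subtree decomposition, whereas the paper inducts on $s = \text{hs}(t)$ along a root-to-leaf ``spine'' $v_1,\dots,v_k$ chosen so that every subtree hanging off the spine has Strahler number at most $s-1$; these are genuinely different decompositions, though both rest on the same observation that at least one of the two subtrees must drop in Strahler number. However, your inductive step has a concrete failure at $d=1$: there the tree is a root with two leaf children, each subtree has height $0$ and Strahler number $0$, so the induction hypothesis gives $0^0=1$ leaf for each and the sum is $2$, yet you bound it by $(d-1)^s+(d-1)^{s-1}=0^1+0^0=1$. The step that breaks is the implicit monotonicity $d_1^{s_1}\le (d-1)^s$ for a subtree of height $d_1\le d-1$ and Strahler number $s_1\le s$, which is false when $d_1=s_1=0$ and $d=1$. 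For $d\ge 2$ your computation is fine.

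This gap cannot be closed, because the stated bound is actually false for $s=1$: the height-$1$ cherry has $2>1^1$ leaves, and more generally the caterpillar of height $d$ (at each internal node one child is a leaf) has Strahler number $1$ and $d+1>d^1$ leaves. The paper's own proof shares the blemish --- its closing inequality $1+d\cdot(d-1)^{s-1}\le d^s$ also fails for $s=1$ --- so you are in good company. The sharp bound is $\sum_{i=0}^{s}\binom{d}{i}$ leaves, and your induction proves, verbatim and without boundary trouble, the slightly weaker but correct bound of $(d+1)^s$ leaves (base case $(0+1)^0=1$; monotonicity $(d_1+1)^{s_1}\le d^{s_1}\le d^{s}$; step $d^s+d^{s-1}\le (d+1)^s$), which suffices for every use of the lemma in the paper, changing only the constant in the node bound $2|N|^k$. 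Your reduction of the node count to the leaf count via the identity $(\text{number of nodes})=2\ell-1$ for full binary trees is correct and matches what the paper does.
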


\begin{proof}
We prove the statement by induction on $s = \text{hs}(t)$. If $s=0$ then $t$ must consist of a single leaf and the statement holds (we define $0^0 = 1$).
Otherwise take a path $v_1, v_2, \ldots, v_k$ in $t$, where $v_1$ is the root, $v_k$ is a leaf, and for every $2 \le i \le k$, if $t_i$ is the subtree rooted in $v_i$
and $t'_i$ is the subtree rooted in the sibling node of $v_i$, then $\text{hs}(t_i) \geq \text{hs}(t'_i)$.  Let $t_1 = t$. Then we must have $\text{hs}(t'_{i+1}) < \text{hs}(t_i) \leq s$ 
for every $1 \leq i \leq k-1$. Moreover, every $t'_i$ has height at most $d-1$. Using induction, we can bound the number of leaves in $t$
by
$$
1 + \sum_{i=2}^k  (d-1)^{s-1} \leq 1 + d \cdot (d-1)^{s-1} \leq  d^s .
$$
This shows the lemma.
\end{proof}

\bigskip

\subparagraph*{\bf Formal languages.} We assume that the reader is familiar with basic definitions from automata theory. Our definitions
of deterministic finite automata (DFA), nondeterministic finite automata (NFA), and context-free grammars are the standard ones.

Consider a DFA $\mathcal{A} = (Q, \Sigma, q_{0}, \delta, F)$, where $q_0 \in Q$ is the initial state, $\delta : Q \times \Sigma \to Q$ is the transition
mapping and $F \subseteq Q$ is the set of final states. The {\em transformation monoid} of $\mathcal{A}$ is the submonoid of $Q^Q$ (the set of all
mappings on $Q$ and composition of functions as the monoid operation) generated by all mappings $q \mapsto \delta(q,a)$ for $a \in \Sigma$.
A {\em group DFA} is a DFA whose transformation monoid is a group.

Context-free grammars will be always in Chomsky normal form.
When we speak of a {\em derivation tree} of a context-free grammar, we always assume that the root of the tree is labelled with the start nonterminal
and every leaf is labelled with a terminal symbol. When we talk about the Horton-Strahler number of such a tree, we remove all terminal-labelled
leafs so that the resulting tree is a binary tree (due to the Chomsky normal form).  In a {\em partial derivation tree}, we also allow leafs labelled with
nonterminals (but we still assume that the root is labelled with the start nonterminal).

\section{Black box groups}

More details on black box groups can be found in \cite{babai,seress03}.
Roughly speaking, in the black box setting group elements are encoded by bit strings of a certain length $b$ and there exist oracles for multiplying two group elements, computing the inverse of a group element, checking whether a given group element is the identity, and checking whether a given bit string of length $b$ is a valid encoding of a group element.\footnote{The latter operation is not 
allowed in \cite{babai}.}
As usual, each execution of an oracle operation counts one time unit, but the parameter $b$ enters the input length
additively.

Formally, a {\em black box} is a tuple 
$$
B=(b,c,\mathsf{valid},\mathsf{inv},\mathsf{prod},\mathsf{id},G,f),
$$
such that $G$ is a finite group (the group in the box), $b, c \in \mathbb{N}$,
and the following properties hold:
\begin{itemize}
\item $f : \{ 0,1\}^b \to G \uplus \{\ast\}$ is a mapping with $$G \subseteq f(\{ 0,1\}^b)$$
($f^{-1}(g) \neq \emptyset$ is the set names of the group element $g$).
\item $\mathsf{valid} : \{0,1\}^b \to  \{ \mathsf{yes}, \mathsf{no}\}$ is a mapping such that
$$\forall x \in \{0,1\}^b : f(x) \in G \ \Longleftrightarrow  \ \mathsf{valid}(x)=\mathsf{yes}.$$
\item $\mathsf{inv} : \{0,1\}^b \to \{0,1\}^b$ is a mapping such that for all $x \in f^{-1}(G)$:  $$f(\mathsf{inv}(x)) = f(x)^{-1}.$$
\item $\mathsf{prod} : \{0,1\}^b \times \{0,1\}^b \to \{0,1\}^b$ is a mapping such that for all $x,y \in f^{-1}(G)$: $$f(\mathsf{prod}(x,y)) = f(x) f(y).$$
\item $\mathsf{id} : \{0,1\}^b \times \{0,1\}^{c} \to \{ \mathsf{yes}, \mathsf{no}\}$ is a mapping such that
for all $x \in f^{-1}(G)$: 
$$f(x) = 1 \ \Longleftrightarrow \ \exists y \in \{0,1\}^{c} : \mathsf{id}(x,y) = \mathsf{yes}$$
 (such a $y$ is called a witness for $f(x)=1$).
\end{itemize}
We call $b$ the code length of the black box. 

A black box Turing machine is a deterministic or nondeterministic oracle Turing machine $T$ that has four special oracle
query states $q_{\mathsf{valid}}$, $q_{\mathsf{inv}}$,  $q_{\mathsf{prod}}$, $q_{\mathsf{id}}$, together with a special oracle tape, on which
a binary string is written.  The input for $T$ consists of two unary encoded numbers $b$ and $c$ and some additional
problem specific input. In order to determine the behavior of $T$ on the four special states $q_{\mathsf{valid}}$, $q_{\mathsf{inv}}$,  $q_{\mathsf{prod}}$, $q_{\mathsf{id}}$,
$T$  must be coupled with 
a black box $B = (b,c,\mathsf{valid},\mathsf{inv},\mathsf{prod},\mathsf{id},G,f)$ (where $b$ and $c$ must match the first part of the input of $T$). 
Then $T$ behaves as follows:
\begin{itemize}
\item After entering $q_{\mathsf{valid}}$ the oracle tape is overwritten by $\mathsf{valid}(x)$ where $x \in  \{0,1\}^b$ is the bit string consisting of the first
$b$ bits on the oracle tape.
\item After entering $q_{\mathsf{inv}}$ the oracle tape is overwritten by $\mathsf{inv}(x)$ where $x \in  \{0,1\}^b$ is the bit string consisting of the first
$b$ bits on the oracle tape.
\item After entering $q_{\mathsf{prod}}$ the oracle tape is overwritten by $\mathsf{prod}(x,y)$ where $x,y \in \{0,1\}^{b}$ and
$xy$ is the bit string consisting of the first $2b$ bits on the oracle tape.
\item After entering $q_{\mathsf{id}}$ the oracle tape is overwritten by $\mathsf{id}(x,y)$ where $x \in \{0,1\}^{b}$, $y \in \{0,1\}^{c}$ and
$xy$ is the bit string consisting of the first $b+c$ bits on the oracle tape.
\end{itemize}
As usual with oracle Turing machines, each of these four operations happens instantaneously and counts time $\mathcal{O}(1)$ for the total running time. 
We denote the machine with the above behaviour by $T_B$. Note that the black box 
$$B = (b,c,\mathsf{valid},\mathsf{inv},\mathsf{prod},\mathsf{id},G,f)$$ is not part of the input of $T$, only the unary encoded numbers $b$ and $c$
are part of the input.

Assume that $\mathcal{P}$ is an algorithmic problem for finite groups. The input for $\mathcal{P}$ is a finite group $G$ and some additional data $X$ (e.g. a context-free 
grammar with terminal alphabet $G$ in the next section). We do not specify exactly, how $G$ is represented. The additional input $X$ may contain elements of $G$.
We will say that $\mathcal{P}$  belongs to {\bf NP} for black box groups
if there is a nondeterministic black box Turing machine $T$, whose input is of the form $b,c,X$ with unary encoded numbers $b$ and $c$, 
such that for every black box 
$B = (b,c,\mathsf{valid},\mathsf{inv},\mathsf{prod},\mathsf{id},G,f)$ the following holds: $T_B$ accepts the input $b,c,X$ 
(where $X$ denotes the additional input for $\mathcal{P}$ and group elements in $X$ are represented by bit strings from $f^{-1}(G)$) if and only if $(G,X)$ belongs to $\mathcal{P}$.
The running time of $T_B$ is polynomial in $b+c+|X|$. Note that since $G$ may have order $2^b$, the order of $G$ may be exponential in the input length.
We will use the analogous definition for other complexity classes, in particular for {\bf PSPACE}.

For the rest of the paper we prefer a slightly more casual handling of black box groups. We always identify bits strings from $x \in f^{-1}(G)$ with 
the corresponding group elements.  So, we will never
talk about bit strings $x \in f^{-1}(G)$, but instead directly deal with elements of $G$. The reader should notice that we cannot directly verify whether
a given element $g \in G$ is the identity. This is only possible in a nondeterministic way by guessing a witness $y \in \{0,1\}^{c}$.
The same applies to the verification of an identity $g = h$, which is equivalent to $g h^{-1} = 1$.
This allows to cover also quotient groups by the black box setting; see \cite{babai}.

We need the following well-known fact from \cite{babai}:
\begin{lemma} \label{lemma-gwp-black-box}
The subgroup membership problem for  black box groups (given group elements $g, g_1, \ldots, g_n$, does $g \in \langle g_1, \ldots, g_n \rangle$ hold?)
belongs to {\bf NP}.
\end{lemma}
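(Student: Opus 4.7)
The plan is a direct application of the reachability theorem (Theorem~\ref{reachability-theorem}) combined with the nondeterministic identity-check built into the black box model. Let $g, g_1, \ldots, g_n \in G$ be the input and set $H = \langle g_1, \ldots, g_n \rangle$. Recall that in the black box setting, group elements are encoded by bit strings of length $b$, given in unary, and in particular $|G| \leq 2^b$, so $\log_2|H| \leq b$.

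First I would apply Lemma~\ref{lemma-lagrange} (conceptually; it is not strictly necessary, but clarifies sizes) to note that some subset of $\{g_1,\ldots,g_n\}$ of size at most $b$ generates $H$. Then, by Theorem~\ref{reachability-theorem} applied to $H$ and its generating set $\{g_1,\ldots,g_n\}$, we have $g \in H$ if and only if there exists a straight-line program $\mathcal{S} = (x_i := r_i)_{1 \le i \le \ell}$ of size $\ell \le (1+b)^2$ over $\{g_1,\ldots,g_n\}$ whose produced element equals $g$. This gives a succinct witness.

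The nondeterministic black box Turing machine proceeds as follows. It guesses $\ell \le (1+b)^2$ instructions, each of which is either an index $i \in \{1,\ldots,n\}$ (a reference to a generator) or a pair $(j,k)$ with $j,k < i$ indicating a product. Each instruction can be written in $\mathcal{O}(\log n + \log b)$ bits, so the guess is of polynomial size. The machine then evaluates $\mathcal{S}$ step by step on the oracle tape using $\mathsf{prod}$, obtaining at the end a bit string $x \in \{0,1\}^b$ intended to encode $g$. Finally, it computes $\mathsf{prod}(x, \mathsf{inv}(\tilde g))$, where $\tilde g$ is the bit string encoding $g$ from the input, guesses a witness $y \in \{0,1\}^c$, and accepts iff $\mathsf{id}(\mathsf{prod}(x, \mathsf{inv}(\tilde g)), y) = \mathsf{yes}$.

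Correctness follows from the reachability theorem: if $g \in H$, a suitable SLP exists and $\mathsf{prod}(x,\mathsf{inv}(\tilde g))$ encodes the identity, so a valid witness $y$ exists; conversely, if the machine accepts, then $x$ and $\tilde g$ encode the same element of $G$, and $x$ was built from products of the $g_i$, so the element lies in $H$. The running time is polynomial in $b + c + |X|$ because $\ell$ is polynomial in $b$ and each instruction costs one oracle call. The main subtlety, and really the only one, is that equality of group elements is not directly testable in the black box model; this is precisely what the nondeterministic guess of an identity witness $y$ resolves, which is why the problem lands in {\bf NP} rather than {\bf P}.
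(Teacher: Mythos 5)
Your proof is correct and follows essentially the same route as the paper: guess a straight-line program of size at most $(1+b)^2$ over the generators via the reachability theorem, evaluate it with $\mathsf{prod}$, and verify $g'g^{-1}=1$ nondeterministically with the $\mathsf{id}$ oracle. The extra detail on witness encoding and the aside about Lemma~\ref{lemma-lagrange} are harmless but not needed.
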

This is a consequence of the reachability theorem: Let $b$ be the code length of the black box. Hence there are at most $2^b$ group elements.
By the reachability theorem (Theorem~\ref{reachability-theorem}) it suffices to  
guess a straight-line program over $\{ g_1, \ldots, g_n \}$ of size at most $(1+\log_2 2^b)^2 = (b+1)^2$, evaluate it using the oracle for $\mathsf{prod}$ (let $g'$
be the result of the evaluation) and check whether $g' g^{-1} = 1$. The later can be done nondeterministically using the oracle for $\mathsf{id}$.

\section{Context-free membership in black box groups} \label{sec-cfg}

The goal of this section is to prove the following two results. Recall the definition of the class $\text{CFG}(k)$ from the introduction.

\begin{theorem}\label{theoremPSPACEblack-box}
The context-free subset membership problem for black box groups is in {\bf PSPACE}.
\end{theorem}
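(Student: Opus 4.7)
The plan is to reformulate the question as productivity in an implicitly-defined product grammar and to decide that productivity by an alternating polynomial-time algorithm; invoking the Chandra--Kozen--Stockmeyer identity $\mathrm{AP} = \mathrm{PSPACE}$ then gives the claimed upper bound. Concretely, for the input CFG $\mathcal{G} = (V, G, P, S)$ in Chomsky normal form, I consider the implicit product grammar $\mathcal{G}'$ whose nonterminals are pairs $(A, g) \in V \times G$, with binary rules $(A, g) \to (B, g_1)(C, g_2)$ for each production $A \to BC$ of $\mathcal{G}$ and every factorization $g = g_1 g_2$ in $G$, and terminal rules $(A, g) \to t$ for every terminal rule $A \to t$ of $\mathcal{G}$ with $\phi_G(t) = g$. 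The question becomes productivity of $(S, 1)$ in $\mathcal{G}'$, which the alternating algorithm decides as follows: from a current pair $(A, g)$, existentially guess a rule of $\mathcal{G}$ for $A$; on a terminal rule nondeterministically guess a $c$-bit identity witness and verify $\phi_G(t) = g$ via the $\mathsf{prod}$ and $\mathsf{id}$ oracles, accepting; on a binary rule $A \to BC$ existentially guess $g_1 \in G$, set $g_2 := g_1^{-1} g$ using $\mathsf{inv}/\mathsf{prod}$, then universally pick one of $(B, g_1)$ or $(C, g_2)$ and recurse.

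The principal difficulty is bounding the alternation depth. A minimum-height certificate tree has no pair $(A', g')$ repeated along any root-to-leaf path---otherwise one could splice the upper copy and get a shorter tree with the same value---so naively the depth is at most $|V| \cdot |G| \leq N \cdot 2^b$, which is exponential and would give only $\mathrm{APSPACE} = \mathrm{EXPTIME}$. To bring the depth down to polynomial, I plan to use a balanced-separator decomposition of the derivation tree: every binary tree with $L$ leaves has a subtree with leaf-count in $[\lceil L/3 \rceil, \lfloor 2L/3 \rfloor]$. For this I introduce an auxiliary context predicate $\mathrm{Ctx}_n(A, g; B, g'; x, y)$, meaning ``there exists a partial derivation tree from $A$ with a single $B$-hole, of total leaf-count $\leq n$, whose left and right terminal yields evaluate respectively to $x$ and $y$ with $x\,g'\,y = g$'', and decompose both $R_n$ (plain productivity with $\leq n$ leaves) and $\mathrm{Ctx}_n$ along separators, so that each recursive step replaces a size-$n$ goal by goals of size at most $2n/3$. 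The resulting recursion has depth $O(\log n)$, and each frame stores only $\mathrm{poly}(N, b, c)$ bits (the current pair, the guessed splits and separator data, and an $O(\log n)$-bit size counter).

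The step I expect to require the most care, and which is the real content of the proof, is establishing that it suffices to take $n \leq 2^{\mathrm{poly}(N, b, c)}$; i.e., that the shortest productive derivation tree has only single-exponential yield length. The elementary CFG bound applied to $\mathcal{G}'$ gives only $n \leq 2^{|V| \cdot |G|}$, which is double-exponential in $b$ and would still leave the algorithm at $\mathrm{EXPTIME}$. I intend to obtain a single-exponential bound via a group-theoretic compression in the spirit of the reachability theorem (Theorem~\ref{reachability-theorem}): when a nonterminal $A$ of $\mathcal{G}$ occurs with many distinct group-element labels $g_1, \ldots, g_r$ along a single path in a minimum tree, one can express the element the subderivation must produce by a straight-line program of length $O(b^2)$ over $\{g_1, \ldots, g_r\}$ and thread this program back into the derivation to shorten the tree. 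Once this bound is in place the alternation depth becomes $\mathrm{poly}(N, b, c)$, all oracle calls run in polynomial time, so the algorithm is alternating polynomial time and the problem lies in $\mathrm{PSPACE}$.
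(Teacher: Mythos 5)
Your architecture (product grammar on $V\times G$, alternating polynomial time via a $1/3$--$2/3$ separator recursion with a context predicate) is a reasonable route to \textbf{PSPACE}, but it stands or falls with the claim you defer to the end: that a minimal derivation tree whose yield evaluates to the target element has at most $2^{\mathrm{poly}(|V|,b)}$ leaves. That claim is true, but it is essentially the entire content of the theorem, and the argument you sketch for it does not work as stated. If a nonterminal $A$ occurs along a path with labels $g_1,\dots,g_r$, the objects that compose along the path are not the produced values $g_i$ but the contexts $(\phi_G(u_i),\phi_G(v_i))\in G\times\hat G$ with $A\Rightarrow^* u_iAv_i$ between consecutive occurrences; an SLP over $\{g_1,\dots,g_r\}$ producing ``the element the subderivation must produce'' is vacuous (each $g_i$ already lies in that set) and gives you nothing to splice. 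Even after repairing this to an SLP over the context pairs (via Theorem~\ref{reachability-theorem} applied in $G\times\hat G$), ``threading the program back'' means re-nesting contexts according to the \emph{unfolded} SLP, whose length is itself $2^{O(b^2)}$ and each of whose letters costs a fresh copy of a context subtree --- so the local exchange does not obviously produce a strictly smaller tree, you have no decreasing measure, and paths interact, so a path-by-path surgery does not assemble into a global bound. This is a genuine gap, not a routine detail.

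The paper sidesteps length bounds entirely. It associates to each nonterminal the subgroup $G_A=\{(\phi_G(u),\phi_G(v)) : A\Rightarrow^*_{\mathcal{G}} uAv\}\le G\times\hat G$ of context effects, shows that the tuple $(G_A)_{A\in N}$ is the limit of the monotone iteration $(\Gamma\Delta)^i$ reached after $2\cdot|N|\cdot\log_2|G|$ steps (a Lagrange chain bound, Lemma~\ref{lemma-lagrange}), maintains nondeterministically guessed generating sets of size $O(\log_2|G|)$ for these subgroups (Lemmas~\ref{lemmacfg1} and~\ref{lemmacfg3} together with Lemma~\ref{lemma-gwp-black-box}), and copes with acyclic derivation trees of possibly exponential size by a depth-$|N|$ recursion that reuses space (Lemma~\ref{lemmacfg2-PSPACE}). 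If you want to salvage your approach, the cleanest fix is to derive your single-exponential yield bound \emph{from} such a fixed-point analysis --- but at that point you have reproved the paper's argument and the separator machinery no longer buys you anything.
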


\begin{theorem}\label{theoremStrahler-black-box}
For every $k \ge 1$, the context-free membership problem for black box groups restricted to context-free grammars from $\text{CFG}(k)$ is in  {\bf NP}.
\end{theorem}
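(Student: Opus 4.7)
I would give a nondeterministic polynomial-time algorithm (for fixed $k$) certifying $1 \in T_S := \pi(L(\mathcal{G}))$ with a polynomial-size witness. The key structural observation is that every derivation tree $t$ from a nonterminal $A$ in $\mathcal{G} \in \text{CFG}(k)$ has a canonical \emph{top skeleton}, namely its maximal acyclic partial derivation subtree rooted at $A$. Interpreting the $\text{CFG}(k)$ hypothesis to also cover acyclic partial derivation trees (whose height is bounded by $|N|$ since no nonterminal can repeat on any root-to-leaf path) and invoking Lemma~\ref{lemma-strahler}, this skeleton has at most $|N|^k$ internal nodes, polynomial for fixed $k$. At each nonterminal leaf $C$ of the skeleton, the full derivation continues with a sub-derivation from $C$ yielding some element of $T_C$.

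\textbf{Pump subgroups.} For each nonterminal $A$, the context set $H_A = \{(\pi(u'), \pi(v')) : A \Rightarrow^* u' A v',\ u',v' \in \Sigma^*\}$ with composition $(u_1,v_1)(u_2,v_2) = (u_1 u_2, v_2 v_1)$ is a non-empty subsemigroup of the finite group $G \times G^{op}$---hence a subgroup---and it acts on $T_A$ via $(u,v)\cdot g = ugv$. By Lemma~\ref{lemma-lagrange}, $H_A$ admits a generating set of size at most $2b$, and by the reachability theorem (Theorem~\ref{reachability-theorem}), every element of $H_A$ is representable by an SLP of size $O(b^2)$ over such generators.

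\textbf{The NP algorithm.} Nondeterministically guess, simultaneously for every nonterminal $B$, a generating set of at most $2b$ pump elements of $H_B$ together with, for each one, a \emph{simple pump witness}: an acyclic path of length at most $|N|$ from root $B$ back to an inner $B$-leaf, whose branch-off sub-derivations have yielded group elements expressed as SLPs over the other guessed pump generators acting on the group elements of polynomial-size acyclic derivations from the corresponding nonterminals. Additionally guess a top-level witness: an acyclic skeleton from $S$ and, at each nonterminal leaf $C$, a sub-goal element computed in the same manner, so that the top-level product evaluates to $1$. The total data size is polynomial in $|\mathcal{G}|$ and $b$ for fixed $k$, and verification uses the black box group oracles (for product, inverse, and identity) in polynomial time.

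\textbf{Main obstacle.} The principal technical difficulty is the structural lemma behind the witness scheme: that every element of every $T_C$ encountered is indeed expressible in the described form. This is subtle because $T_C$ is not in general equal to $H_C \cdot U_C$ (where $U_C$ denotes the set of acyclic-derivation yields from $C$); the grammar $B \to CC,\ C \to CC \mid a$ illustrates that $T_B$ can be driven by pumps of $C$ rather than $B$ itself. The resolution is to thread the recursion through the skeleton hierarchy, with sub-goals themselves decomposed via their own skeletons and pump subgroups, while the simultaneous guessing across all nonterminals resolves the mutual recursion among pump subgroups; the $\text{CFG}(k)$ bound on skeleton size together with the reachability-based SLPs on each $H_A$ keep the total witness polynomial.
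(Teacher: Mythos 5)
You have assembled the right ingredients---the pump subgroups $H_A \le G \times \hat{G}$, acyclic skeletons of polynomial size via the $\text{CFG}(k)$ hypothesis and Lemma~\ref{lemma-strahler}, Lagrange-bounded generating sets (Lemma~\ref{lemma-lagrange}), and SLP certificates from the reachability theorem---and these are exactly the ingredients of the paper's proof. But the step you yourself flag as the ``main obstacle'' is not resolved by your proposal, and it is precisely where the paper does its real work. The sets $L(A)$ and the groups $H_A$ are \emph{mutually} recursive: a pump witness for $H_B$ branches off into elements of $L(C)$, whose skeleton certificates insert pump pairs from $H_D$, whose witnesses branch off into $L(E)$, and so on, with no a priori bound on the depth of this regress. ``Simultaneous guessing across all nonterminals'' does not resolve it: if the witness for a guessed generator of $H_B$ may refer to guessed generators of the other $H_D$ without any stratification, the certificate becomes circular and soundness fails. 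Concretely, for the grammar $A \to AB \mid a$, $B \to BA \mid b$ with $b = a^{-1}$ in $G$, one has $H_A = \{1\} \times \langle a \rangle$; yet for \emph{every} $x \in G$ one can guess $(1,x)$ as a generator of $H_A$ ``witnessed'' by the branch-off value $by$ with $(1,y)$ a guessed generator of $H_B$, and $(1,y)$ ``witnessed'' by the branch-off value $ax$---all local checks pass since $x = b(ax) = x$. So your verifier either accepts false instances or, if you instead insist on fully grounded witnesses, the recursion has unbounded depth and the certificate is no longer polynomial.

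The missing idea is to replace the circular scheme by a monotone fixed-point iteration. The paper packages the two halves of your recursion as operators $\Delta$ (plug a tuple of subgroups into acyclic skeletons to get sets $L_A \subseteq G$) and $\Gamma$ (close off pump paths over such sets to get subgroups of $G \times \hat{G}$), starts from the tuple of \emph{trivial} subgroups, and iterates $\Gamma\Delta$. Starting at the bottom gives soundness at every stage, and---this is the quantitative point you would still need---the iteration stabilizes at $(G_A)_{A \in N}$ after at most $2\cdot|N|\cdot\log_2|G|$ rounds, because each component carries an increasing chain of subgroups of $G \times \hat{G}$ and Lagrange bounds the length of any such chain. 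Each round is realized in {\bf NP} by guessing fresh $O(\log|G|)$-element generating sets and verifying them relative to the previous round's sets via subgroup membership (Lemma~\ref{lemma-gwp-black-box}); this is where your SLP certificates actually enter. Without this stratification and the chain-length bound your witness scheme is not a valid {\bf NP} certificate. (A secondary issue: your skeleton is the maximal acyclic \emph{prefix} of a derivation tree, whereas $\text{CFG}(k)$ only bounds the Horton--Strahler number of acyclic \emph{derivation} trees; the paper sidesteps this by contracting maximal pump paths, which produces a genuine acyclic derivation tree to which the bound applies.)
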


Before we prove these results, let us derive some corollaries. Theorem~\ref{theoremStrahler} is a direct corollary of Theorem~\ref{theoremStrahler-black-box}.
Restricted to regular grammars (which are in  $\text{CFG}(1)$ after bringing them to Chomsky normal form) we get:

\begin{corollary} \label{thm-rat-black-box}
The rational subset membership problem for black box groups is in {\bf NP}.
In particular, the rational subset membership problem for symmetric groups is in {\bf NP}.
\end{corollary}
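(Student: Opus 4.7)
The plan is to deduce both statements from Theorem~\ref{theoremStrahler-black-box} applied with $k=1$. For the second statement, we must also check that $S_m$ admits a polynomial-time black box representation when $m$ is given in unary: encode permutations of $\{1,\ldots,m\}$ as bit strings of length $b = O(m \log m)$, implement $\mathsf{valid}$, $\mathsf{inv}$, and $\mathsf{prod}$ in polynomial time in the obvious way, and observe that the identity can be checked deterministically, so one can choose $c$ to be a trivial constant and $\mathsf{id}$ accepts exactly when the input encodes the identity permutation. The main content is thus the reduction of the rational subset membership problem to the context-free membership problem restricted to $\text{CFG}(1)$.

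Given an NFA $\mathcal{A} = (Q, G, q_0, \delta, F)$ over the alphabet $G$, I would first construct the standard right-linear grammar with nonterminals $Q$, start symbol $q_0$, productions $q \to a q'$ for each $(q,a,q') \in \delta$, and $q \to \varepsilon$ for each $q \in F$. Then eliminate $\varepsilon$-productions by adding $q \to a$ whenever $q \to a q'$ is a production and $q' \in F$, keeping only $q_0 \to \varepsilon$ if $q_0 \in F$ (which is allowed in Chomsky normal form). Finally, for each $a \in G$ appearing as a terminal, introduce a fresh nonterminal $A_a$ with $A_a \to a$ and replace each $q \to a q'$ by $q \to A_a q'$. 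The resulting grammar $\mathcal{G}$ is in Chomsky normal form, has size polynomial in that of $\mathcal{A}$, and satisfies $L(\mathcal{G}) = L(\mathcal{A})$.

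It remains to verify that $\mathcal{G} \in \text{CFG}(1)$. After pruning the terminal leaves, as specified in the Horton-Strahler definition for CNF grammars, every derivation tree of $\mathcal{G}$ is a right caterpillar: each internal node corresponds to a production $q \to A_a q'$, its left child $A_a$ becomes a leaf, and its right child $q'$ is either a leaf (coming from a production $q' \to a$) or the root of another such caterpillar. A short induction on the spine length shows that any right caterpillar has Horton-Strahler number at most $1$: a single leaf has number $0$, and attaching a leaf as the sibling of a subtree of Horton-Strahler number in $\{0,1\}$ produces a tree of Horton-Strahler number exactly $1$. Therefore every acyclic derivation tree of $\mathcal{G}$ (indeed, every derivation tree) has Horton-Strahler number at most $1$, so $\mathcal{G} \in \text{CFG}(1)$, and Theorem~\ref{theoremStrahler-black-box} yields the claimed {\bf NP} upper bound. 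There is no serious obstacle; the only point requiring care is the shape analysis of the derivation trees, which is essentially immediate once the CNF is chosen to preserve the right-linear structure.
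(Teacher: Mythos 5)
Your proposal is correct and follows exactly the paper's route: the paper derives this corollary in one line from Theorem~\ref{theoremStrahler-black-box} by observing that regular grammars, once in Chomsky normal form, lie in $\text{CFG}(1)$, which is precisely the reduction and caterpillar-shape analysis you spell out. The extra details you supply (the CNF conversion preserving right-linearity and the Horton-Strahler induction) are routine verifications the paper leaves implicit.
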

Also Theorem~\ref{theoremPSPACEcomplete} can be easily obtained now: 
The upper bound follows directly from Theorem~\ref{theoremPSPACEblack-box}.
The lower bound can be obtained from a result of Jerrum \cite{Jerrum85}. In the introduction we mentioned that 
Jerrum proved the  {\bf PSPACE}-completeness of the MGS problem for the case where the number $\ell$ is give in binary notation. Given
permutations $a_1, \ldots, a_n \in S_m$ and a binary encoded number $\ell$ one can easily construct a context-free grammar for 
$\{1,a_1,\ldots,a_n\}^{\ell} \subseteq S_m$. Hence, the MGS problem with $\ell$ given in binary notation reduces to the context-free membership problem for symmetric groups,
showing that the latter is {\bf PSPACE}-hard. 

In the rest of the section we prove Theorems~\ref{theoremPSPACEblack-box} and \ref{theoremStrahler-black-box}.
We fix a finite group $G$ that is only accessed via a black box. 

\subparagraph*{The spanning tree technique.}

We start with subgroups of $G$ that are defined by finite nondeterministic automata (later, we will apply the following construction to a different group 
that is also given via a black box).
Assume that $\mathcal{A} = (Q, G, \{q_0\}, \delta, \{q_0\})$ is a finite nondeterministic automaton with terminal alphabet $G$. Note 
that $q_0$ is the unique initial and the unique final state. This ensures that the language $L(\mathcal{A})$ defined by $\mathcal{A}$
(which, by our convention, is identified with a subset of the group $G$) is a subgroup of $G$: the set $L(\mathcal{A})$ is clearly a submonoid
and every submonoid of a finite group is a subgroup. We now show a classical technique for finding a generating set for $L(\mathcal{A})$.

In a first step we remove from $\mathcal{A}$ all states $p \in Q$ such that there is no path from $q_0$ to $p$ as well as all
states $p$ such that there is no path from $p$ to $q_0$.  Let $\mathcal{A}_1$ be the resulting NFA.
We have $L(\mathcal{A}) = L(\mathcal{A}_1)$. 

In the second step we add for every transition $(p,g,q)$ the inverse transition $(q,g^{-1},p)$ (unless it already exists).
Let $\mathcal{A}_2$ be the resulting NFA. We claim that $L(\mathcal{A}_1) = L(\mathcal{A}_2)$. 
Note that by the first step, there must be a path from $q$ to $p$ in $\mathcal{A}_1$.
Let $h \in G$ be the group element produced by this path.
Take a $k > 0$ such that $(gh)^k = 1$ in $G$. Hence, $g^{-1} = h (gh)^{k-1}$. Moreover, there is a path in $\mathcal{A}_1$
from $q$ to $p$ which produces
the group element $h (gh)^{-1} = g^{-1}$. This shows that $L(\mathcal{A}_1) = L(\mathcal{A}_2)$. 

In the third step we compute the generating set for $L(\mathcal{A}_2) = L(\mathcal{A})$ using the spanning tree technique (see \cite{KM02} for an application in the context of free groups).
Consider the automaton $\mathcal{A}_2$ as an undirected multi-graph $\mathcal{G}$. The nodes of $\mathcal{G}$ are the states of $\mathcal{A}_2$.
Moreover, every undirected pair $\{ (p,g,q), (q,g^{-1},p) \}$ of transitions in the  NFA $\mathcal{A}_2$ is 
an undirected edge in $\mathcal{G}$ connecting the nodes $p$ and $q$. Note that there can be several
edges between two nodes (as well as loops); hence $\mathcal{G}$ is indeed a multi-graph.
We then compute a spanning tree $\mathcal{T}$ of $\mathcal{G}$. For every state of $p$ of $\mathcal{A}_2$ we fix
a  directed simple path $\pi_p$ in $\mathcal{T}$ from $q_0$ to $p$. We can view this path $\pi_p$ as a path in $\mathcal{A}_2$.
Let $g_p$ be the group element produced by the path $\pi_p$.
For every undirected edge $e = \{ (p,g,q), (q,g^{-1},p) \}$ in $\mathcal{G} \setminus \mathcal{T}$ 
let $g_e := g_p g g_q^{-1}$ (we could also take $g_q g^{-1} g_p^{-1}$).
A standard argument shows that the set $\{ g_e \mid e \text{ is an edge in } \mathcal{G} \setminus \mathcal{T}\}$
indeed generates $L(\mathcal{A})$.

The above construction can be carried out in polynomial time for black box groups. This is straightforward. The only detail that we 
want to emphasize is that we have to allow multiple copies of undirected edges $\{ (p,g,q), (q,g^{-1},p) \}$ in the black box setting.
The reason is that we may have several names (bit strings) denoting the same group element and we can only 
verify nondeterministically whether two bit strings represent the same group element. But this is not a problem;
it just implies that we may output copies of the same generator.

\subparagraph*{The operations $\Delta$ and $\Gamma$.}
Let $\mathcal{G}=(N,G,P,S)$ be a context-free grammar in Chomsky normal form that is part of the input, whose terminal
alphabet is the finite group $G$.  When we speak of the input size in the following, we refer to $|\mathcal{G}|+b+c$, where $b$ and $c$ are the two
unary encoded numbers from the black box for $G$ and the size $|\mathcal{G}|$ is defined 
as the number of productions of the grammar.

With $L(A)$ we denote the set of all words $w \in G^*$ that are derived from the nonterminal $A \in N$ and, as usual,
we identify $L(A)$ with $\phi_G(L(A)) \subseteq G$. 
Let $\hat{G}$ be the dual group of $G$: it has the same underlying set as $G$ and 
if $g \cdot h$ denotes the product in $G$ then the multiplication $\circ$ in $\hat{G}$
is defined by $g \circ h = h \cdot g$. The direct product $G \times \hat{G}$ will be important for the following
construction. Note that it is straightforward to define a black box for $G \times \hat{G}$ from a black box for $G$.

Recall from the introduction that  a derivation tree is acyclic if in every path from the root to a leaf every nonterminal appears at most once. 
The height of an acyclic derivation tree is bounded by $|N|$. 
We now define two important operations $\Delta$ and $\Gamma$. The operation $\Delta$ maps a tuple $s = (H_A)_{A \in N}$ of subgroups $H_A \leq G \times \hat{G}$ 
to a tuple $\Delta(s) = (L_A)_{A \in N}$ of subsets $L_A \subseteq G$ (not necessarily subgroups), whereas $\Gamma$ maps a tuple
$t = (L_A)_{A \in N}$ of subsets $L_A \subseteq G$ to a tuple $\Gamma(t) = (H_A)_{A \in N}$ of subgroups $H_A \leq G \times \hat{G}$.

We start with $\Delta$. Let $s=(H_A)_{A \in N}$ be a tuple of subgroups $H_A \leq G \times \hat{G}$. The tuple $\Delta(s) = 
(L_A)_{A \in N}$ of subsets $L_A \subseteq G$ is obtained as follows: Let $T$ be an acyclic derivation tree with root $r$ labelled by $A \in N$. 
We assign inductively a set $L_v \subseteq G$ to every inner node $v$: Let $B$ the label of $v$.
If $v$ has only one child it must be a leaf since our grammar is in Chomsky normal form. 
Let $g \in G$ be the label of this leaf. Then we set $L_v = \{h_1 g h_2 \mid (h_1,h_2) \in H_B \}$. If $v$ has two children $v_1,v_2$ (where $v_1$ is the left child and $v_2$ the right child),
then the sets $L_{v_1} \subseteq G$ and $L_{v_2} \subseteq G$ are already determined and we set 
$$
L_v = \{h_1g_1g_2h_2 \mid (h_1,h_2) \in H_B, g_1 \in L_{v_1}, g_2 \in L_{v_2}\}.
$$ 
We set $L(T) = L_r$ and finally define $L_A$ as the union of all sets $L(T)$ where $T$ is an acyclic derivation whose root is labelled with $A$.

The second operation $\Gamma$ is defined as follows: Let $t = (L_A)_{A \in N}$ be a tuple of subsets $L_A \subseteq G$. 
Then we define the tuple $\Gamma(t) = (H_A)_{A \in N}$ with $H_A \leq G \times \hat{G}$ as follows: Fix a nonterminal $A \in N$.
Consider a sequence $p = (A_i \to A_{i,0} A_{i,1})_{1 \le i \le m}$ of productions $(A_i \to A_{i,0} A_{i,1}) \in P$ and a sequence
$d = (d_i)_{1 \le i \leq m}$ of directions $d_i \in \{0,1\}$ such that 
$A_{i+1} = A_{i,d_i}$ for all $1 \leq i \leq m$, $A_1 = A = A_{m,d_m}$.
Basically, $p$ and $d$ define a path from $A$ back to $A$.
For every $1 \le i \leq m$ we define the sets
$$
M_i = \begin{cases} 
     L_{A_{i,0}} \times \{1\} \text{ if } d_i = 1 \\
     \{1\} \times L_{A_{i,1}}  \text{ if } d_i = 0
     \end{cases}
$$
We view $M_i$ as a subset of $G \times \hat{G}$ and define
$$
M(p,d) = \prod_{1 \le i \le m} M_i,
$$
where $\prod$ refers to the product in  $G \times \hat{G}$. If $p$ and $d$ are the empty sequences ($m=0$) then $M(p,d) = \{ (1,1) \}$.
Finally we define $H_A$ as the set of all $M(p,d)$, where $p = (A_i \to A_{i,0} A_{i,1})_{1 \le i \le m}$ and $d = (d_i)_{1 \le i \leq m}$ are as above (including
the empty sequences).
This set $H_A$ is a subgroup of $G \times \hat{G}$. To see this, it suffices to argue that $H_A$ is a monoid.
The latter follows from the fact that two pairs of sequences $(p,d)$ and $(p',d')$ of the above form can be composed to a single pair $(pp', dd')$.

In the following, we will speak of ${\bf NP}$ algorithms with oracles.
Here, we mean non-deterministic polynomial-time Turing machines with oracles.
However, the oracle can only be queried positively: There is an instruction
that succeeds if the oracle answers ``yes'', but cannot be executed if the
oracle would answer ``no''. This implies that if there is an ${\bf NP}$
(resp.\ ${\bf PSPACE}$ algorithm) for the oracle queries, there exists
an ${\bf NP}$ (resp.\ ${\bf PSPACE}$) algorithm for the entire
problem. Likewise, we will use the notion of oracle ${\bf PSPACE}$
algorithms.
\begin{lemma}\label{lemmacfg1}
	        For tuples $t=(L_A)_{A\in N}$, there is an ${\bf NP}$ algorithm for
        membership to the entries of $\Gamma(t)$, with access to an oracle for
        the entries of $t$.
\end{lemma}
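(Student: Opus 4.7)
The plan is to realize $H_A$ as the language of a finite NFA over $G \times \hat{G}$, extract from it a generating set of ``short'' products of elements of the $L_B$ via the spanning tree technique, and then combine this with Lemma~\ref{lemma-lagrange} and the reachability theorem (Theorem~\ref{reachability-theorem}) to obtain a polynomial-size NP certificate. Concretely, I would set up the NFA $\mathcal{A}'$ with state set $N$ and unique initial/final state $A$, and, for every production $B\to B_0 B_1$ and every $g\in L_{B_1}$ (resp.\ $g\in L_{B_0}$), a transition $B\to B_0$ labelled $(1,g)$ (resp.\ $B\to B_1$ labelled $(g,1)$). Comparing with the definition of $M(p,d)$ directly gives $L(\mathcal{A}') = H_A$ as subsets of $G\times\hat G$, in particular $H_A$ is a subgroup.

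Next I would apply the spanning tree technique to $\mathcal{A}'$: symmetrise by adding inverse transitions, pick a spanning tree of the underlying undirected multigraph, and read off the fundamental-cycle generators $\gamma_e = g_p \cdot \ell_e \cdot g_q^{-1}$ for the non-tree edges $e$. This yields a generating set of $H_A$ in which every generator is a product of at most $2|N|$ factors, each of the form $(g,1)^{\pm 1}$ or $(1,g)^{\pm 1}$ with $g \in L_B$ for some $B \in N$, and whose consecutive source/target states trace out a legal closed walk from $A$ to $A$ in the symmetrised NFA $\mathcal{A}'_2$. By Lemma~\ref{lemma-lagrange} some subset $S'$ of these generators with $|S'| \le \log_2|H_A| \le 2b$ still generates $H_A$, and Theorem~\ref{reachability-theorem} then supplies, for every element of $H_A$, a straight-line program of size at most $(1+2b)^2$ over $S'$.

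The NP algorithm for testing $(g,h) \in H_A$ would then proceed as follows. First, guess up to $2b$ purported generators; each one is given by an explicit sequence of at most $2|N|$ factors, where every factor specifies a source state, a target state, a production of $\mathcal{G}$ justifying the step, a forward/inverse bit, and an element $g' \in \{0,1\}^b$. For every factor, positively query the oracle for the appropriate $L_B$ (the sibling nonterminal in the chosen production) to confirm $g' \in L_B$, and use the black box to assemble each generator. Then guess an SLP of size at most $(1+2b)^2$ over these generators, evaluate it with the $\mathsf{prod}$ oracle to obtain $r \in G \times \hat G$, and finally guess a witness from $\{0,1\}^c$ and positively query $\mathsf{id}$ to certify $r \cdot (g,h)^{-1} = 1$. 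All guesses have total bit length polynomial in $b+c+|\mathcal{G}|$ and all verifications take polynomial time.

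Completeness follows immediately from the three ingredients above; for soundness, every verified generator is, by construction, the product of the labels along a legal closed walk from $A$ to $A$ in $\mathcal{A}'_2$ and hence lies in $L(\mathcal{A}'_2) = H_A$, so the SLP output, being in the subgroup generated by these generators, also lies in $H_A$. The main subtlety to handle is that $\mathcal{A}'$ has potentially exponentially many transitions (one per element of each $L_B$), so neither $\mathcal{A}'$ nor its spanning tree can be constructed explicitly; the spanning tree argument is therefore invoked purely existentially to guarantee that a small set of short walk-based generators exists, while the algorithm sidesteps the construction by letting the prover guess short $A$-to-$A$ walks directly and checking their labels through the oracle.
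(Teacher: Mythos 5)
Your proposal is correct and follows essentially the same route as the paper: realize $H_A$ as the language of the NFA with state set $N$, invoke the spanning tree technique purely existentially (since the NFA has exponentially many transitions), use Lemma~\ref{lemma-lagrange} to guess only $O(b)$ closed-walk generators whose labels are verified through the oracle for the $L_B$, and then decide membership in the generated subgroup via the reachability theorem, which is exactly the content of Lemma~\ref{lemma-gwp-black-box} that the paper cites at this point. The only cosmetic difference is that you explicitly bound each guessed walk's length by $2|N|$ via the spanning tree structure, where the paper simply guesses loops of polynomial size.
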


\begin{proof}
Let $\Gamma(t) = (H_A)_{A \in N}$.
For every nonterminal $A \in N$ we define the NFA \[ \mathcal{A}_A = (N, (G \times \hat{G}), \{A\}, \delta, \{A\}),\] whose input alphabet is the finite group $G \times \hat{G}$.
The NFAs $\mathcal{A}_A$ only differ in the initial and final state.
The transition relation $\delta$ contains all triples $(B, (g,h), C) \in N \times (G \times \hat{G}) \times N$ such that for some $D \in N$
either $(B \to CD) \in P$, $g=1$, and $h \in L_D$ or $(B \to DC) \in P$, $h=1$, and $g \in L_D$.  Then we have $L(\mathcal{A}_A) = H_A$.
As in the spanning tree approach we add for every transition $(B,(g,h),C)$ in the NFA $\mathcal{A}_A$ also the inverse transition $(C,(g^{-1},h^{-1}),B)$.
In the following, $\mathcal{A}_A$ refers to this NFA. The number of transitions of the NFA $\mathcal{A}_A$ can be exponential in the input size, so we
cannot afford to construct $\mathcal{A}_A$ explicitly. But this is not necessary, since we only aim to come up with a nondeterministic polynomial time
algorithm.

Recall the spanning tree technique, which yields a generating set for the subgroup $L(\mathcal{A}_A) = H_A$.
This generating set will be in general of exponential size.
On the other hand, Lemma~\ref{lemma-lagrange} guarantees that the generating set produced by the spanning tree approach contains a subset of size at most
$\log_2 |G \times \hat{G}| = 2 \cdot \log_2 |G|$ that still generates $L(\mathcal{A}_A)$.
Note that $2 \cdot \log_2 |G|$ is linearly bounded in the input size. We can therefore nondeterministically produce a set of at most $2 \cdot \log_2 |G|$ loops in
the NFA $\mathcal{A}_A$. We do not even have to produce a spanning tree before: every generator produced by the spanning tree approach is a loop in
$\mathcal{A}_A$ and every such loop certainly yields an element of $H_A$. For every transition that appears on one of the guessed loops we guess
a transition label (either a pair $(1,h)$ or a pair $(g,1)$) and verify, using the oracle for membership to $L_B$,
that we guessed a transition in the NFA $\mathcal{A}_A$.

Let us denote with $S_A \subseteq G \times \hat{G}$ the set produced by the above nondeterministic algorithm.
For all nondeterministic choices we have $S_A \subseteq H_A$ and there exist nondeterministic choices for which
$\langle S_A \rangle = H_A$. By Lemma~\ref{lemma-gwp-black-box} we can finally check in {\bf NP} whether
a given pair $(g,h)$ belongs to $\langle S_A \rangle$.
\end{proof}

\begin{lemma}\label{lemmacfg2-NP}
Assume that the input grammar $\mathcal{G}$ is restricted to the class
$\text{CFG}(k)$ for some fixed $k$.  For tuples $s=(H_A)_{A\in N}$ of subgroups
$H_A\leq G\times\hat{G}$, there exists an ${\bf NP}$ algorithm for membership to
entries in $\Delta(s)$, with access to an oracle for membership to each entry
of $s$.
\end{lemma}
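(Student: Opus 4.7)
The plan is to guess a small acyclic derivation tree $T$ that witnesses $g \in L_A$, together with the relevant elements of $H_B$ at each inner node, and then to verify everything in polynomial time via the oracle and the black-box operations. Unfolding the definition of $L(T)$, one sees that $g \in L_A$ iff there exists an acyclic derivation tree $T$ with root $r$ labelled $A$ and, for every inner node $v$ of $T$ labelled with some nonterminal $B_v$, a pair $(h_1^v, h_2^v) \in H_{B_v}$, such that the element $g_v \in G$ defined bottom up by $g_v = h_1^v g_0 h_2^v$ when $v$ has a unique leaf child labelled $g_0 \in G$, and $g_v = h_1^v g_{v_1} g_{v_2} h_2^v$ when $v$ has two children $v_1, v_2$, satisfies $g_r = g$.

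The key size bound comes from the CFG$(k)$ restriction combined with acyclicity. Since no nonterminal repeats on any root-to-leaf path of an acyclic derivation tree, its height is at most $|N|$. Since $\mathcal{G} \in \text{CFG}(k)$, we moreover have $\text{hs}(T) \le k$. Applying Lemma~\ref{lemma-strahler} to the binary tree obtained from $T$ after deleting the terminal leaves, the number of leaves is at most $|N|^k$, so $T$ has at most $2|N|^k$ nodes, which is polynomial in the input size for fixed $k$.

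This yields the following oracle $\mathbf{NP}$ algorithm for checking $g \in L_A$: nondeterministically guess an acyclic derivation tree $T$ of the above polynomial size with root labelled $A$; for every inner node $v$ labelled $B_v$, guess a pair $(h_1^v, h_2^v) \in G \times \hat{G}$ and use the oracle for the $B_v$-entry of $s$ to verify $(h_1^v, h_2^v) \in H_{B_v}$; then compute $g_r$ bottom up using the $\mathsf{prod}$ oracle of the black box; finally guess an identity witness $y \in \{0,1\}^c$ and use $\mathsf{id}$ to verify $g_r \cdot g^{-1} = 1$. Since $T$ has polynomially many nodes, the total number of guesses, oracle queries, and black-box operations is polynomial in $|\mathcal{G}| + b + c$.

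The main obstacle — and the whole point where the CFG$(k)$ hypothesis is used — is the polynomial bound on the size of the witnessing tree $T$. Acyclicity alone only gives height $\le |N|$, which in principle allows up to $2^{|N|}$ nodes; without a further structural restriction, a nondeterministic guess of $T$ would be infeasible. The Horton–Strahler bound on acyclic derivation trees, together with Lemma~\ref{lemma-strahler}, is exactly what converts this potentially exponential search into a polynomial-size certificate, which is what the $\mathbf{NP}$ upper bound requires.
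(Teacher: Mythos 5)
Your proof is correct and follows essentially the same route as the paper's: bound the size of an acyclic derivation tree by $2|N|^k$ via Lemma~\ref{lemma-strahler}, guess the tree together with a pair from $H_{B_v}$ at each inner node (verified via the oracle), and evaluate bottom up, comparing the result with $g$. Your version is in fact slightly more careful than the paper's in two small respects: you write the oracle query as membership in $H_{B_v}$ for the label $B_v$ of the node $v$ (the paper's text says $H_A$, which is evidently a slip), and you spell out that the final equality test $g_r = g$ is done nondeterministically via the $\mathsf{id}$ oracle in the black-box model.
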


\begin{proof}
By assumption, the Horton-Strahler number of every acyclic derivation tree of $\mathcal{G}$ is bounded by the constant $k$. Since the height of an acyclic derivation tree is bounded by $|N|$ the total number of nodes in the tree is bounded by $2 |N|^k$ by Lemma~\ref{lemma-strahler}. Let $\Delta(s) = (L_A)_{A \in N}$. Fix an $A \in N$ and a group element $g \in G$.
We want to verify whether $g \in L_A$. For this we guess an acyclic derivation tree $T$ with root $A$. This can be done by a nondeterministic polynomial time machine.
Moreover we guess for every inner node $v$ of $T$ that is labelled with the nonterminal $B$ a pair $(h_{v,1}, h_{v,2}) \in G \times \hat{G}$ and verify using the oracle for membership to $H_A$ that
$(h_{v,1}, h_{v,2}) \in H_A$. If the verification is successful,
we evaluate every inner node $v$ to a group element $g_v \in G$. If $v$ has a single child, it must be labelled with a group element $h \in G$ (due to a production $B \to h$)
and we set $g_v = h_{v,1} h h_{v,2}$. If $v$ has two children $v_1$ (the left child) and $v_2$ (the right child) then we set $g_v = h_{v,1} g_{v_1} g_{v_2} h_{v,2}$.
At the end, we check whether $g = g_r$, where $r$ is the root of the tree $T$.
\end{proof}

\begin{lemma}\label{lemmacfg2-PSPACE}
For tuples $s = (H_A)_{A \in N}$ of subgroups $H_A \leq G \times \hat{G}$, there is
a ${\bf PSPACE}$ algorithm for membership to $\Delta(s)$, using an oracle for membership to the entries of $s$.
\end{lemma}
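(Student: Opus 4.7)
The plan is to design a \emph{nondeterministic} polynomial-space algorithm that, given a nonterminal $A \in N$ and a group element $g \in G$, decides whether $g \in L_A$; since $\mathbf{NPSPACE} = \mathbf{PSPACE}$ by Savitch's theorem, this suffices. Unlike the situation in Lemma~\ref{lemmacfg2-NP}, we cannot afford to guess the entire acyclic derivation tree $T$ witnessing $g \in L_A$, because without the Horton--Strahler restriction $T$ may contain exponentially many nodes. The key observation I would exploit is that the \emph{height} of any acyclic derivation tree is at most $|N|$, which makes a depth-first traversal using only polynomial space feasible.

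Concretely, I would implement a recursive procedure $\mathsf{Check}(B, g, U)$ whose arguments are a nonterminal $B \in N$, a target element $g \in G$, and the set $U \subseteq N$ of nonterminals already labelling strict ancestors on the current root-to-node path. The procedure first checks $B \notin U$ (otherwise the branch under construction would violate acyclicity), then nondeterministically guesses a pair $(h_1, h_2) \in G \times \hat{G}$ and verifies $(h_1, h_2) \in H_B$ via the oracle. Next it guesses a production of $\mathcal{G}$ with left-hand side $B$: for a unit production $B \to a$, it verifies $g = h_1 a h_2$ using the black-box operations; for a binary production $B \to CD$, it guesses $g_1, g_2 \in G$ with $g = h_1 g_1 g_2 h_2$ and then recursively invokes $\mathsf{Check}(C, g_1, U \cup \{B\})$ followed by $\mathsf{Check}(D, g_2, U \cup \{B\})$. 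The top-level call is $\mathsf{Check}(A, g, \emptyset)$. By construction, the accepting computations of this procedure are in bijection with triples consisting of an acyclic derivation tree $T$ rooted at $A$, a labelling of its internal nodes by pairs $(h_{v,1}, h_{v,2}) \in H_{\text{label}(v)}$, and the induced values $g_v \in G$ witnessing $g = g_r \in L(T) \subseteq L_A$, so soundness and completeness are immediate from the definition of $\Delta(s)$.

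Because $B \notin U$ is enforced at every call and $|U|$ strictly grows with each recursive descent, the recursion depth is at most $|N|$; each stack frame stores only the set $U$, a constant number of $b$-bit black-box names ($h_1,h_2,g,g_1,g_2$), and some bookkeeping, giving $O(|N|+b)$ bits per frame, hence polynomial total space. Oracle queries to the entries of $s$ cost unit space, and black-box operations are polynomial. The only point requiring a bit of care—and what I expect to be the main, though mild, technical obstacle—is that an equality test of the form $g = h_1 a h_2$ is not directly available in the black-box setting: it must be implemented nondeterministically by forming $(h_1 a h_2)\,g^{-1}$ via $\mathsf{prod}$ and $\mathsf{inv}$ and then guessing an identity witness for the $\mathsf{id}$ oracle. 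These auxiliary guesses are polynomially bounded in $b+c$, so they are absorbed into the overall nondeterministic polynomial-space budget, and the desired $\mathbf{PSPACE}$ bound follows by Savitch's theorem.
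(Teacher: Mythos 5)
Your proposal is correct and follows essentially the same route as the paper's proof: a nondeterministic depth-first recursion that guesses a production and a pair $(h_1,h_2)\in H_B$ at each node, guesses the split $g=h_1g_1g_2h_2$ for binary productions, and bounds the recursion depth by $|N|$ by tracking the nonterminals on the current path to enforce acyclicity. Your explicit remarks on reusing stack space, on the nondeterministic identity test via the $\mathsf{id}$ oracle, and on invoking $\mathbf{NPSPACE}=\mathbf{PSPACE}$ are all consistent with (and slightly more detailed than) the paper's argument.
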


\begin{proof}
	The proof is similar to Lemma~\ref{lemmacfg2-NP}. However, without the restriction that the input grammar belongs to $\text{CFG}(k)$ for a fixed constant $k$,
an acyclic derivation tree of the grammar $\mathcal{G}$ may be of size exponential in the input length. But we will see that we never have to store the whole tree but only a polynomial sized part of the tree. To check $g \in L_A$ we do the following: We guess a production for $A$
and a pair $(h_1,h_2) \in G \times \hat{G}$ and verify using our oracle that $(h_1,h_2) \in H_A$.
If the guessed production for $A$ is of the form $A \rightarrow h$ for a group element $h \in G$ then we only have to check $h_1 h h_2 = g$ and we are done.
If the production is of the form $A \rightarrow BC$ for nonterminals $B,C \in N$ then we guess additional group elements $g_1,g_2 \in G$ and check that $g = h_1g_1g_2h_2$. If this holds, we continue with two recursive calls for  $g_1 \in L_B$ and $g_2 \in L_C$. We have to make sure that this eventually terminates. In order to ensure termination for every computation path we store the nonterminals that we already have seen. By this the recursion depth is bounded by $|N|$. This also ensures that we traverse an acyclic derivation tree.
We obtain a  nondeterministic polynomial space algorithm since the recursion depth is bounded by $|N|$ and the space used for the first recursive call can be reused for the second one.
\end{proof}

\begin{lemma}\label{lemmacfg3}
For tuples $s = (H_A)_{A \in N}$ subgroups $H_A \leq G \times \hat{G}$, there
exists an ${\bf NP}$ algorithm, with access to an oracle for membership to each
entry of $s$, with the following properties:
\begin{itemize}
\item On every computation path the machine outputs a tuple $(S_A)_{A \in N}$ of subsets $S_A \subseteq H_A$.
\item There is at least one computation path on which the machine outputs a tuple $(S_A)_{A \in N}$ such that every $S_A$ generates $H_A$.
\end{itemize}
\end{lemma}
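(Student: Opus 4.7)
The plan is to apply Lemma~\ref{lemma-lagrange} to each $H_A$ and then simply guess the small generating sets whose existence it guarantees. Since the black-box code length is $b$, we have $|G \times \hat{G}| \leq 2^{2b}$, so Lemma~\ref{lemma-lagrange} (applied with generating set $S = H_A$) produces a generating subset of $H_A$ of size at most $\log_2 |G \times \hat{G}| \leq 2b$, which is polynomial in the input length. This is the only nontrivial structural fact we need.

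The nondeterministic algorithm then operates as follows. For each nonterminal $A \in N$ independently, guess up to $2b$ candidate pairs in $G \times \hat{G}$ (each encoded by two bit strings of length $b$), and for every guessed pair $(g,h)$ invoke the positive oracle for $s$ to verify $(g,h) \in H_A$. Collect the verified pairs into $S_A$ and, after doing this for every $A$, output the tuple $(S_A)_{A \in N}$.

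Checking the two requirements is immediate. On every surviving computation path the oracle has certified every element of $S_A$ to lie in $H_A$, so $S_A \subseteq H_A$. On the distinguished branch that guesses, for each $A$, precisely the generating subset of size at most $2b$ supplied by Lemma~\ref{lemma-lagrange}, all positive queries succeed and $\langle S_A \rangle = H_A$ for every $A$ simultaneously. The total number of guessed bits is $O(|N| \cdot b^2)$ and the total number of oracle queries is $O(|N| \cdot b)$, both polynomial in the input length, so the procedure runs in nondeterministic polynomial time.

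I do not anticipate any real obstacle: the content of the lemma is essentially a nondeterministic repackaging of Lemma~\ref{lemma-lagrange}, whose purpose is to convert opaque membership oracles for the $H_A$ into explicit short generating sets that downstream steps (in particular iterations combining $\Gamma$ and $\Delta$) can handle as actual data rather than as oracles.
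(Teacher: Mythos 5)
Your proposal is correct and follows essentially the same argument as the paper: apply Lemma~\ref{lemma-lagrange} to each $H_A$ to get a generating set of size at most $2\log_2|G| \leq 2b$, nondeterministically guess such sets, and verify membership of each guessed element via the positive oracle. The only cosmetic difference is that you phrase the size bound in terms of the code length $b$ rather than $\log_2|G|$, which is immaterial.
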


\begin{proof}
By Lemma~\ref{lemma-lagrange} we know that every subgroup $H_A \leq G \times \hat{G}$ is generated by a set of at most $\log_2|G \times \hat{G}| = 2 \cdot \log_2|G|$ generators.
The machine simply guesses for every $A \in N$ a subset $R_A \subseteq G \times \hat{G}$ of size at most $2 \cdot \log_2|G|$. Then it verifies, using the oracle, for every $A \in N$ and every $(g,h) \in R_A$  $(g,h) \in H_A$. If all these verification steps succeed, the machine outputs the set $R_A$ for every $A \in N$.
\end{proof}
If membership for $H_A$ is in {\bf PSPACE} for every $A \in N$, then we could actually compute deterministically in polynomial space a generating set for every
$H_A$ by iterating over all elements of $G \times \hat{G}$. But we will not need this stronger fact.

\begin{proof}[Proofs of Theorem \ref{theoremPSPACEblack-box} and \ref{theoremStrahler-black-box}.]
Recall that $\phi_G : G^* \to G$ is the canonical morphism from Section~\ref{sec-prel}.
For every nonterminal $A \in N$ we define the subgroup $G_A \leq G \times \hat{G}$ by
$$
G_A = \{(\phi_G(u), \phi_G(v)) \mid u,v\in G^*, A  \Rightarrow^*_{\mathcal{G}} uAv\}.
$$
Recall that $L(A)$ is the set of group elements that can be produced from the nonterminal $A$.

\medskip
\noindent
{\em Claim 1.} $\Delta((G_A)_{A \in N}) = (L(A))_{A \in N}$.

\medskip
\noindent
To see this, let $\Delta((G_A)_{A \in N}) = (L_A)_{A \in N}$. The inclusion $L_A \subseteq L(A)$ is clear: the definition of $\Delta$ and $G_A$ 
directly yields a derivation tree with root labelled by $A$ for every element in $L_A$. For the inclusion $L(A) \subseteq L_A$ take an arbitrary
derivation tree $T$ for an element $w \in L(A)$ with root labelled by $A$. We can get an acyclic derivation tree from $T$ by contracting paths
from a $B$-labelled node down to another $B$-labelled node in $T$ for an arbitrary $B \in N$. If we choose these paths maximal, then they will
not overlap, which means that we can contract all chosen paths in parallel and thereby obtain an acyclic derivation tree.
 Each path produces a pair from $G_B$ for some $B \in N$. This shows that $w \in L_A$ and proves Claim 1.
 
\medskip
\noindent 
Let $s_0 =(H_A)_{A \in N}$ with $H_A = \{(1,1)\}$ for all $A \in N$
be the tuple of trivial subgroups of $G \times \hat{G}$.  For two tuples $s_1 = (H_{A,1})_{A \in N}$ 
and $s_2 = (H_{A,2})_{A \in N}$  of subgroups of $G \times \hat{G}$ we write $s_1 \leq s_2$ if $H_{A,1} \leq H_{A,2}$ for every $A \in N$.
By induction over $i \geq 0$ we show that $(\Gamma \Delta)^i(s_0) \leq (\Gamma \Delta)^{i+1}(s_0)$ for all $i$:
For $i=0$ this is clear and the induction step holds since $\Gamma$ as well as $\Delta$ are monotone with respect to componentwise
inclusion. Hence, we can define $\lim_{i \rightarrow \infty}(\Gamma \Delta)^i(s_0)$.

\medskip
\noindent
{\em Claim 2.}  $\displaystyle(G_A)_{A \in N} = \lim_{i \rightarrow \infty}(\Gamma \Delta)^i(s_0) = (\Gamma \Delta)^j(s_0)$ where $j = 2 \cdot |N| \cdot \lfloor \log_2 |G| \rfloor$.

\medskip
\noindent 
From the definition of $\Gamma$ and $\Delta$ we directly get 
$(\Gamma \Delta)^i(s_0) \le (G_A)_{A \in N}$ for every $i \ge 0$. Let us next show that
$(G_A)_{A \in N} \leq \lim_{i \rightarrow \infty}(\Gamma \Delta)^i(s_0)$. Let 
$\lim_{i \rightarrow \infty}(\Gamma \Delta)^i(s_0) = (H_A)_{A \in N}$ and $(g,h) \in G_A$. Hence, there exists a derivation $A  \Rightarrow^*_{\mathcal{G}} uAv$ such that $g = \phi_G(u)$
and $h = \phi_G(v)$. We prove $(g,h) \in H_A$ by induction on the length of this derivation.
Let $T$ be the derivation tree corresponding to the derivation $A  \Rightarrow^*_{\mathcal{G}} uAv$.
From the derivation $A  \Rightarrow^*_{\mathcal{G}} uAv$ we obtain a sequence
$p = (A_i \to A_{i,0} A_{i,1})_{1 \le i \le m}$ of productions $(A_i \to A_{i,0} A_{i,1}) \in P$ and a sequence
$d = (d_i)_{1 \le i \leq m}$ of directions $d_i \in \{0,1\}$ such that 
$A_{i+1} = A_{i,d_i}$ for all $1 \leq i \leq m$, $A_1 = A = A_{m,d_m}$.   
Assume that $A_{i,1-d_i}$ derives to $w_i \in G^*$ in the derivation $A  \Rightarrow^*_{\mathcal{G}} uAv$
 for all $1 \leq i \leq m$ and define 
 $$
 (u_i, v_i) = \begin{cases}
   (w_i, 1) & \text{ if } d_i = 1 \\
   (1, w_i) & \text{ if } d_i = 0 .
 \end{cases}
 $$
Then we obtain 
$$
(g,h) = \prod_{1 \le i \le m} (\phi_G(u_i), \phi_G(v_i))
$$
where the product is computed in the group $G \times \hat{G}$.
Let $T_i$ be the subtree of $T$ that corresponds to the derivation $A_{i,1-d_i} \Rightarrow^*_{\mathcal{G}} w_i$.
We now apply the same argument that we used for the proof of  Claim 1 to each of the trees $T_i$, i.e.,
we contract maximal subpaths from a $B$-labelled node down to a $B$-labelled node (for $B \in N$ arbitrary). 
Each of these subpaths corresponds to a derivation $B  \Rightarrow^*_{\mathcal{G}} u' B v'$ that is of course shorter
than the derivation $A  \Rightarrow^*_{\mathcal{G}} uAv$. By induction, we get $(\phi_G(u'), \phi_G(v')) \in G_B$.
Moreover, from the construction, it follows that (i) $\phi_G(w_i)$ belongs to the $A_{i,1-d_i}$-component of $\Delta((H_B)_{B \in N})$ and (ii) $(g,h)$
belongs to the $A$-component of $\Gamma(\Delta((H_B)_{B \in N}))$, which is $H_A$.
The construction is shown in Figure~\ref{fig-lemma-claim2}. All the paths between identical nonterminals in the subtrees below 
$A'_{1,1}, \ldots, A'_{6,0}$ are contracted and replaced by their``effects'', which by induction are already in the corresponding groups $G_X$ ($X = B, \ldots, G$).
This makes the subtrees acyclic.
This concludes the proof of the first equality in Claim 2.

\tikzset{snake it/.style={decorate, decoration={snake, amplitude =.2mm, segment length=1mm}}}
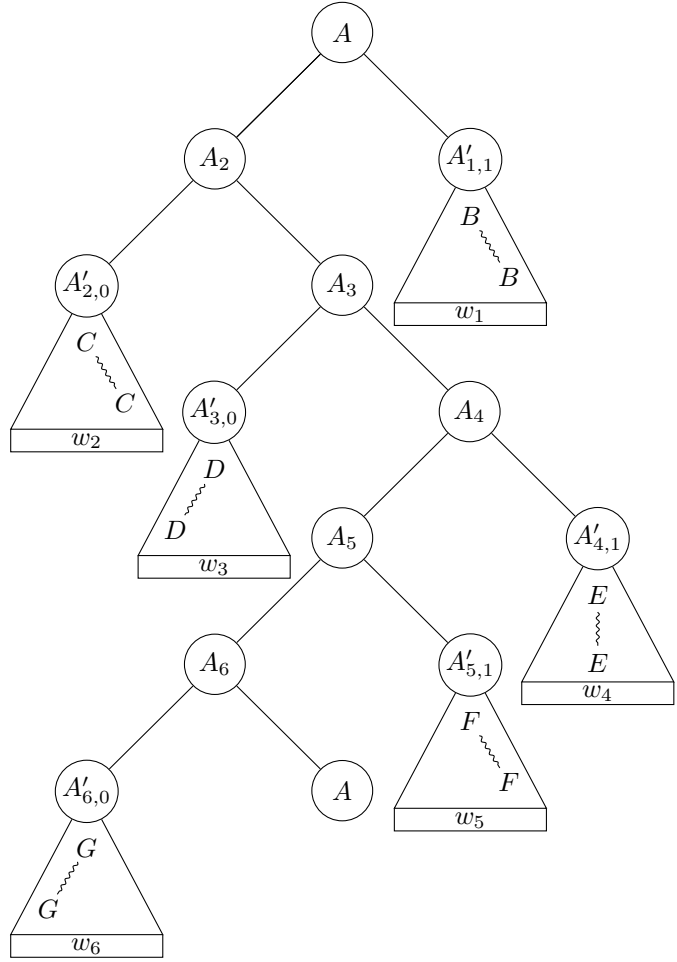
\begin{figure}[t]
\pgfkeys{/pgf/inner sep=.07em}  
  \centering
    \begin{tikzpicture}  
      \node[circle,draw,minimum width=8mm] (A1) {$A$} ;
      \node[circle,draw, below left = 1.1 cm and 1.1 cm of A1,minimum width=8mm] (A2) {$A_2$} ;
      \node[circle,draw, below right = 1.1 cm and 1.1 cm of A1,minimum width=8mm] (A'1) {$A'_{1,1}$} ;
      \node[circle,draw, below right = 1.1 cm and 1.1 cm of A2,minimum width=8mm] (A3) {$A_3$} ;
      \node[circle,draw, below left = 1.1 cm and 1.1 cm of A2,minimum width=8mm] (A'2) {$A'_{2,0}$} ;
      \node[circle,draw, below right = 1.1 cm and 1.1 cm of A3,minimum width=8mm] (A4) {$A_4$} ;
      \node[circle,draw, below left = 1.1 cm and 1.1 cm of A3,minimum width=8mm] (A'3) {$A'_{3,0}$} ;
      \node[circle,draw, below left = 1.1 cm and 1.1 cm of A4,minimum width=8mm] (A5) {$A_5$} ;
      \node[circle,draw, below right = 1.1 cm and 1.1 cm of A4,minimum width=8mm] (A'4) {$A'_{4,1}$} ;
      \node[circle,draw, below left = 1.1 cm and 1.1 cm of A5,minimum width=8mm] (A6) {$A_6$} ;
      \node[circle,draw, below right = 1.1 cm and 1.1 cm of A5,minimum width=8mm] (A'5) {$A'_{5,1}$} ;
      \node[circle,draw, below right = 1.1 cm and 1.1 cm of A6,minimum width=8mm] (A7) {$A$} ;
      \node[circle,draw, below left = 1.1 cm and 1.1 cm of A6,minimum width=8mm] (A'6) {$A'_{6,0}$} ;
            
     \draw (A1) -- (A2);
     \draw (A1) -- (A2);
     \draw (A1) -- (A'1);
     \draw (A2) -- (A3);
     \draw (A2) -- (A'2);
     \draw (A3) -- (A4);
     \draw (A3) -- (A'3);
      \draw (A4) -- (A5);
     \draw (A4) -- (A'4);
     \draw (A5) -- (A6);
     \draw (A5) -- (A'5);
     \draw (A6) -- (A7);
     \draw (A6) -- (A'6);
     
     \draw (A'1) -- ($(A'1)+(-1,-1.9)$) -- ($(A'1)+(1,-1.9)$) -- (A'1) ; 
     \draw ($(A'1)+(-1,-1.9)$) -- ($(A'1)+(-1,-2.2)$) -- node[pos=.5, label={[yshift=-.2mm]$w_1$}] {} ($(A'1)+(1,-2.2)$) -- ($(A'1)+(1,-1.9)$) ; 
     \draw (A'2) -- ($(A'2)+(-1,-1.9)$) -- ($(A'2)+(1,-1.9)$) -- (A'2) ; 
     \draw ($(A'2)+(-1,-1.9)$) -- ($(A'2)+(-1,-2.2)$) -- node[pos=.5, label={[yshift=-.2mm]$w_2$}] {} ($(A'2)+(1,-2.2)$) -- ($(A'2)+(1,-1.9)$) ; 
     \draw (A'3) -- ($(A'3)+(-1,-1.9)$) -- ($(A'3)+(1,-1.9)$) -- (A'3) ; 
     \draw ($(A'3)+(-1,-1.9)$) -- ($(A'3)+(-1,-2.2)$) -- node[pos=.5, label={[yshift=-.2mm]$w_3$}] {} ($(A'3)+(1,-2.2)$) -- ($(A'3)+(1,-1.9)$) ; 
     \draw (A'4) -- ($(A'4)+(-1,-1.9)$) -- ($(A'4)+(1,-1.9)$) -- (A'4) ; 
     \draw ($(A'4)+(-1,-1.9)$) -- ($(A'4)+(-1,-2.2)$) -- node[pos=.5, label={[yshift=-.2mm]$w_4$}] {} ($(A'4)+(1,-2.2)$) -- ($(A'4)+(1,-1.9)$) ; 
     \draw (A'5) -- ($(A'5)+(-1,-1.9)$) -- ($(A'5)+(1,-1.9)$) -- (A'5) ; 
     \draw ($(A'5)+(-1,-1.9)$) -- ($(A'5)+(-1,-2.2)$) -- node[pos=.5, label={[yshift=-.2mm]$w_5$}] {} ($(A'5)+(1,-2.2)$) -- ($(A'5)+(1,-1.9)$) ; 
     \draw (A'6) -- ($(A'6)+(-1,-1.9)$) -- ($(A'6)+(1,-1.9)$) -- (A'6) ; 
     \draw ($(A'6)+(-1,-1.9)$) -- ($(A'6)+(-1,-2.2)$) -- node[pos=.5, label={[yshift=-.2mm]$w_6$}] {} ($(A'6)+(1,-2.2)$) -- ($(A'6)+(1,-1.9)$) ; 
     
     \node[circle, below = 1 mm of A'1,minimum width=0mm] (B) {$B$} ;
     \node[circle, below right = 1.1 cm and .5mm of A'1,minimum width=1mm] (B') {$B$} ; 
     \path (B) edge[snake it] (B');
     
     \node[circle, below = 1 mm of A'2,minimum width=0mm] (C) {$C$} ;
     \node[circle, below right = 1.1 cm and .5mm of A'2,minimum width=1mm] (C') {$C$} ; 
     \path (C) edge[snake it] (C');
     
    \node[circle, below = 1 mm of A'3,minimum width=0mm] (D) {$D$} ;
     \node[circle, below left = 1.1 cm and .5mm of A'3,minimum width=1mm] (D') {$D$} ; 
     \path (D) edge[snake it] (D');
     
      \node[circle, below = 1 mm of A'4,minimum width=0mm] (E) {$E$} ;
     \node[circle, below = 1 cm of A'4,minimum width=1mm] (E') {$E$} ; 
     \path (E) edge[snake it] (E');
     
     \node[circle, below = 1 mm of A'5,minimum width=0mm] (F) {$F$} ;
     \node[circle, below right = 1.1 cm and .5mm of A'5,minimum width=1mm] (F') {$F$} ; 
     \path (F) edge[snake it] (F');
     
      \node[circle, below = 1 mm of A'6,minimum width=0mm] (G) {$G$} ;
     \node[circle, below left = 1.1 cm and .5mm of A'6,minimum width=1mm] (G') {$G$} ; 
     \path (G) edge[snake it] (G');
     \end{tikzpicture}
\caption{\label{fig-lemma-claim2} The situation in the proof of $(G_A)_{A \in N} \leq \lim_{i \rightarrow \infty}(\Gamma \Delta)^i(s_0)$.}
\end{figure}

Since all $G_A$ are finite groups there is a smallest number $j \geq 0$ such that
\begin{displaymath}
(\Gamma \Delta)^j(s_0) = (\Gamma \Delta)^{j+1}(s_0) .
\end{displaymath}
We then have $(\Gamma \Delta)^j(s_0) = \lim_{i \rightarrow \infty}(\Gamma \Delta)^i(s_0)$. It remains to show that $j \leq 2 \cdot |N| \cdot \log_2 |G|$.
In each component of the $|N|$-tuples $(\Gamma \Delta)^i(s_0)$ ($0 \leq i \leq j$)
we have a chain of subgroups of $G \times \hat{G}$. By Lagrange's theorem, any chain 
$\{ (1,1) \} = H_0 < H_1 < \cdots < H_{k-1} < H_k \leq G \times \hat{G}$ satisfies $k \leq 2 \cdot \log_2 |G|$. This shows that $j \leq 2 \cdot |N| \cdot \log_2 |G|$
and proves Claim 2.

\medskip
\noindent 
We can now prove Theorem~\ref{theoremStrahler-black-box}. 
By Claim 1 and Lemma~\ref{lemmacfg2-NP} it suffices to show that membership for the subgroups $G_A$ is in {\bf NP}.
For this, we construct a nondeterministic polynomial time machine that computes on every computation path a subset $S_A \subseteq G_A$ for every $A\in N$
such that on at least one computation path it computes a generating set for groups $G_A$ for all $A\in N$. Then we can decide membership for the $\langle S_A \rangle$ in {\bf NP}
by Lemma~\ref{lemma-gwp-black-box}.

The set $S_A$ is computed by initializing $S_A = \{ (1,1) \}$ for 
every $A \in N$ and then doing $2 \cdot |N| \cdot \log_2 |G|$ iterations of the following procedure: Assume that we have already produced the subsets $(S_A)_{A \in N}$.
Membership in $\left\langle S_A \right\rangle$ can be decided in {\bf NP} by Lemma~\ref{lemma-gwp-black-box}.
Hence, by Lemmas~\ref{lemmacfg1} and \ref{lemmacfg2-NP} one can decide membership in every entry of the tuple $\Gamma(\Delta((\left\langle S_A \right\rangle)_{A \in N}))$ in {\bf NP}.
Finally, by Lemma~\ref{lemmacfg3} we can produce nondeterministically in polynomial time a subset $S'_A \subseteq G \times \hat{G}$ for every $A \in N$ such that
for every computation path we have $(\left\langle S'_A \right\rangle)_{A \in N} \le \Gamma(\Delta((\left\langle S_A \right\rangle)_{A \in N}))$ and for at least one computation
path the machine produces subsets $S'_A$ with $(\left\langle S'_A \right\rangle)_{A \in N} = \Gamma(\Delta((\left\langle S_A \right\rangle)_{A \in N}))$.
With the sets $S'_A$ we go into the next iteration.
By Claim 2 there will be at least one computation path on which after 
$2 \cdot |N| \cdot \log_2 |G|$ iterations we get generating set for all the groups $G_A$.
This concludes the proof of Theorem~\ref{theoremStrahler-black-box}.

The proof of Theorem~\ref{theoremPSPACEblack-box} is identical except that we have to use Lemma~\ref{lemmacfg2-PSPACE} instead of 
Lemma~\ref{lemmacfg2-NP}.
\end{proof}

\section{Restrictions of  rational subset membership in symmetric groups}

In this section, we want to contrast the general upper bounds from the previous sections with lower bounds for symmetric groups and restricted versions
of the rational subset membership problem. We start with the subset sum problem.

\subsection{Subset sum in permutation groups}

The following result refers to the abelian group $\mathbb{Z}_3^m$, for which we use the additive notation.
\begin{theorem}
The following problem is {\bf NP}-hard:\\
Input: unary encoded number $m$ and a list of group elements $g,g_1, \ldots, g_n \in \mathbb{Z}_3^m$.\\
Question: Are there $i_1,\dots,i_n \in \{0,1\}$ such that $g= \sum_{1 \le k \le n} i_k \cdot g_k$?
\end{theorem}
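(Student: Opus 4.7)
I would reduce from the \textsc{1-in-3-SAT} problem (with clauses of exactly three literals), which is \textbf{NP}-complete by Schaefer's dichotomy. Let $\varphi$ be an instance with variables $x_1, \ldots, x_N$ and clauses $C_1, \ldots, C_K$. Set $m := N+K$, so that $\mathbb{Z}_3^m = \mathbb{Z}_3^N \times \mathbb{Z}_3^K$, where the first block is indexed by variables and the second block by clauses.

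For each variable $x_i$ I introduce two vectors $v_i^T, v_i^F \in \mathbb{Z}_3^m$. Both have a $1$ in the $i$-th coordinate of the variable block and $0$ elsewhere in that block. In the clause block, $v_i^T$ has a $1$ at coordinate $j$ iff the literal $x_i$ occurs in $C_j$, and $v_i^F$ has a $1$ at coordinate $j$ iff the literal $\neg x_i$ occurs in $C_j$. The target vector $g$ is the all-ones vector. The list of vectors is $(v_i^T, v_i^F)_{1 \le i \le N}$, so the reduction is polynomial time and $m$ is bounded by the instance size (hence is produced in unary).

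The correctness check is the main step. At the $i$-th coordinate of the variable block, the selection sum equals $\varepsilon_i^T + \varepsilon_i^F$ with $\varepsilon_i^T, \varepsilon_i^F \in \{0,1\}$, and must equal $1 \bmod 3$; since $\varepsilon_i^T + \varepsilon_i^F \in \{0,1,2\}$, this forces exactly one of the two vectors to be chosen. Any solution therefore encodes a truth assignment $\alpha$: set $x_i$ true iff $v_i^T$ is selected. At the $j$-th coordinate of the clause block, the selection sum equals the number $t_j$ of literals of $C_j$ satisfied by $\alpha$, and we need $t_j \equiv 1 \pmod 3$. Because $t_j \in \{0,1,2,3\}$, this is equivalent to $t_j = 1$. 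So the reduction has a solution iff $\varphi$ has a 1-in-3 satisfying assignment.

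I do not anticipate a real obstacle here, the construction is a direct linear-algebraic encoding. The only point that needs to be noted is the arithmetic coincidence in $\mathbb{Z}_3$ that makes the reduction work: among the possible literal counts $\{0,1,2,3\}$ of a 3-clause, only $1$ is congruent to $1$ modulo $3$, and this is exactly what pins $t_j$ to the 1-in-3 condition. An analogous reduction from \textsc{3-SAT} (using an extra clause-slack variable per clause) would be slightly heavier but would also go through; I prefer 1-in-3-SAT because the slack machinery is absorbed into the modulus.
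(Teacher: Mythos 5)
Your proposal is correct and follows essentially the same route as the paper: the paper reduces from \emph{positive} 1-in-3-SAT (phrased as the exact 3-hitting set problem), assigns to each variable a single vector whose clause-block coordinates record its occurrences, and targets the all-ones vector, exploiting exactly the arithmetic coincidence you identify (among the possible literal counts $\{0,1,2,3\}$ only $1$ is $\equiv 1 \bmod 3$). Your only deviation is starting from 1-in-3-SAT with negations, which forces the extra variable block and the pair $v_i^T, v_i^F$ per variable; the paper's choice of the monotone variant lets it dispense with that gadget entirely.
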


\begin{proof}
We prove the theorem by a reduction from the problem {\em exact 3-hitting set problem} (X3HS):
\begin{problem}[X3HS]\label{1-in-3-SAT+problem}~\\
Input: a finite set $A$ and a set $\mathcal{B} \subseteq 2^A$ of subsets of $A$, all of size 3.\\
Question: Is there a subset $A' \subseteq A$ such that $|A' \cap C|=1$ for all $C \in \mathcal{B}$? 
\end{problem}
X3HS is the same problem as positive 1-in-3-SAT, which is {\bf NP}-complete \cite[Problem LO4]{gareyjohnson}. 

Let $A$ be a finite set and $\mathcal{B} \subseteq 2^A$ be a set of subsets of $A$, all of size $3$.
W.l.o.g. assume that $A = \{1,\ldots,n\}$ and let $\mathcal{B} = \{C_1, C_2, \ldots, C_m\}$.
We work in the group $\mathbb{Z}_3^m$. 
For every $1 \leq i \leq n$ let
$$
X_i = (a_{i,1}, a_{i,2}, \ldots, a_{i,m}) \in \mathbb{Z}_3^m,
$$
where 
$$
a_{i,j} = \begin{cases}
  0 \text{ if } i \notin C_j \\
  1 \text{ if } i \in C_j.
\end{cases}
$$
Then there exists $A' \subseteq A$ such that $|A' \cap C_j| = 1$ for every $1 \leq j \leq m$ 
if and only if the following equation has a solution $y_1, \ldots, y_n \in \{0,1\}$:
$$
\sum_{i=1}^n y_i \cdot X_i  = (1,1,\ldots,1).
$$
This proves the theorem.
\end{proof}
Clearly $\mathbb{Z}_3^m \leq S_{3m}$. We obtain the following corollary:

\begin{corollary} \label{coro-abelian-subsetsum}
The abelian subset sum problem for symmetric groups is {\bf NP}-hard.
\end{corollary}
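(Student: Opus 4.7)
The plan is to obtain the corollary by a straightforward reduction from the problem solved in the preceding theorem, using the fact that $\mathbb{Z}_3^m$ embeds into $S_{3m}$ as a subgroup of pairwise commuting permutations. Given an instance $(g, g_1, \ldots, g_n)$ of the subset sum problem over $\mathbb{Z}_3^m$, I would construct an instance $(a, a_1, \ldots, a_n)$ of the abelian subset sum problem for symmetric groups with the same Yes/No answer.

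Concretely, I would fix an embedding $\varphi : \mathbb{Z}_3 \hookrightarrow S_3$ by sending the generator $1$ to the $3$-cycle $(1\;2\;3)$, and then take $\varphi^m : \mathbb{Z}_3^m \hookrightarrow \prod_{i=1}^m S_3$, acting coordinate-wise on $m$ disjoint blocks of size $3$. As noted in Section~\ref{sec-prel}, the direct product $\prod_{i=1}^m S_3$ embeds into $S_{3m}$ (by letting each factor act on its own block of $\{1, \ldots, 3m\}$), and this embedding is computable in polynomial time. Setting $a := \varphi^m(g)$ and $a_k := \varphi^m(g_k)$ for $1 \le k \le n$ gives the desired instance with $m' := 3m$ in unary.

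Two facts make the reduction work. First, since the $a_k$ all live inside the image of the abelian group $\mathbb{Z}_3^m$, they pairwise commute, so the constructed instance is indeed a valid input for Problem~\ref{abeliansubsetsum}. Second, because $\varphi^m$ is an injective group homomorphism, the equation $g = \sum_{k} i_k g_k$ in $\mathbb{Z}_3^m$ holds if and only if $a = a_1^{i_1} \cdots a_n^{i_n}$ holds in $S_{3m}$, for any choice of $i_k \in \{0,1\}$. The reduction is clearly polynomial-time, and combined with the theorem stated just above, this yields {\bf NP}-hardness of the abelian subset sum problem for symmetric groups. There is no real obstacle here; the only thing to make precise is that all sizes and the encoding of $m' = 3m$ in unary remain polynomial in the input length of the X3HS instance.
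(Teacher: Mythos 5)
Your reduction is exactly the paper's argument: the paper derives the corollary from the preceding theorem via the observation that $\mathbb{Z}_3^m \leq S_{3m}$, which is precisely the coordinate-wise embedding by $3$-cycles that you spell out. Your write-up just makes explicit the injectivity and commutativity points that the paper leaves implicit; the proof is correct.
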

Let us remark that the subset sum problem for $\mathbb{Z}_2^m$ (with $m$ part of the input) is equivalent
to the subgroup membership problem for $\mathbb{Z}_2^m$ (since every element of $\mathbb{Z}_2^m$ has order two)
and therefore can be solved in polynomial time.

\subsection{Knapsack in permutation groups}

We now come to the knapsack problem in permutation groups.
{\bf NP}-hardness of the general version of knapsack can be easily deduced from a result of Luks:

\begin{theorem}[\cite{luks}]\label{knapsackluks}
The knapsack problem for symmetric groups is {\bf NP}-hard.
\end{theorem}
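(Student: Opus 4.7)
The plan is to derive the theorem directly from the refined form of Luks's result recalled in the introduction: $3$-membership is already {\bf NP}-hard when $G_1 = G_3$ and both $G_1$ and $G_2$ are abelian permutation groups given by lists of generators. I would reshape such an instance into an equivalent knapsack equation in $S_m$.

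First I would take the $3$-membership input: a unary-encoded $m$, a target permutation $a \in S_m$, and abelian groups $G_1 = G_3 \leq S_m$ and $G_2 \leq S_m$ specified by polynomially many generators. Since both groups are abelian, their given generators pairwise commute; write $G_1 = \langle a_1,\ldots,a_k\rangle$ and $G_2 = \langle b_1,\ldots,b_\ell\rangle$. Because the $a_i$ commute and each has finite order, every element of $G_1$ is expressible as $a_1^{i_1}\cdots a_k^{i_k}$ with exponents $i_j \in \mathbb{N}$ (one may take $i_j \in \{0,1,\ldots,\mathrm{ord}(a_j){-}1\}$), and analogously for $G_2$.

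Combining these parameterisations, $a \in G_1 G_2 G_3$ if and only if there exist natural numbers $i_1,\ldots,i_k,\, j_1,\ldots,j_\ell,\, i'_1,\ldots,i'_k$ with
$$
a \;=\; a_1^{i_1}\cdots a_k^{i_k}\; b_1^{j_1}\cdots b_\ell^{j_\ell}\; a_1^{i'_1}\cdots a_k^{i'_k},
$$
which is precisely a knapsack equation in $S_m$ over the sequence $a_1,\ldots,a_k, b_1,\ldots,b_\ell, a_1,\ldots,a_k$ with target $a$. Since $2k+\ell$ is polynomial in the input size, this is a polynomial-time many-one reduction, so {\bf NP}-hardness of knapsack follows. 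There is no serious obstacle: the two occurrences of $G_1$ in Luks's $G_1 G_2 G_1$ product are each linearised via commutativity, and natural-number exponents suffice because every generator has finite order. The substance of the theorem is entirely borne by Luks's result; sharper statements, in particular {\bf NP}-hardness already for the $3$-knapsack problem, are established in the later sections of the paper.
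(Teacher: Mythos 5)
Your reduction is correct and is essentially identical to the paper's proof: both invoke Luks's {\bf NP}-hardness of membership in $G H G$ for abelian $G, H \leq S_m$ and linearise each abelian factor as a product of powers of its generators, yielding the knapsack equation $a = a_1^{i_1}\cdots a_k^{i_k} b_1^{j_1}\cdots b_\ell^{j_\ell} a_1^{i'_1}\cdots a_k^{i'_k}$. No further comment is needed.
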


\begin{proof}
Recall from the introduction that Luks \cite{luks} proved {\bf NP}-completeness of 3-membership for the special case of membership in a product $G H G$ where 
$G$ and $H$ are abelian subgroups of $S_m$.

Let us now assume that $G, H \leq S_m$ are abelian. Let $g_1,g_2,\dots,g_k$ be the given generators of $G$ and let $h_1,h_2,\dots,h_l$ be the given generators of $H$. 
Then $s \in GHG$ is equivalent to the solvability of the equation
\begin{displaymath}
s = g_1^{x_1}g_2^{x_2} \cdots g_k^{x_k}h_1^{y_{1}}h_2^{y_{2}} \cdots h_l^{y_{l}}g_1^{z_{1}}g_2^{z_{2}} \cdots g_k^{z_{k}}
\end{displaymath}
This is an instance of the knapsack problem, which is therefore {\bf NP}-hard.
\end{proof}
We next want to prove that already $3$-knapsack is {\bf NP}-hard.
In other words: the $k$-membership problem is {\bf NP}-hard for every $k \geq 3$ even if the groups are cyclic.
We prove this by a reduction from X3HS; see Problem~\ref{1-in-3-SAT+problem}.
For this, we need two lemmas.

Let $p>0$ be an integer.
For the rest of the section we write $[p]$ for the cycle $(1,2,\dots,p)$ mapping $p$ to $1$ and
$i$ to $i+1$ for $1 \leq i \leq p-1$.

\begin{lemma}\label{lemmaba}
Let $p,q \in \mathbb{N}$ such that $q$ is odd and $p > q > 0$ holds. 
Then the products $[p] [q]$ and $[q] [p]$ are cycles of length $p$.
\end{lemma}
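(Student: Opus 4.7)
The plan is to prove both statements by reading off the one-line form of the products and tracing the orbit of the point $1$. For concreteness I view both $[p]$ and $[q]$ as elements of $S_p$, so $[q]$ acts as the shift $i\mapsto i+1\pmod q$ on $\{1,\dots,q\}$ and as the identity on $\{q+1,\dots,p\}$, while $[p]$ is the shift $i\mapsto i+1\pmod p$ on $\{1,\dots,p\}$. Combining these gives the piecewise description
\[
i^{[p][q]} \;=\;
\begin{cases}
i+2 & \text{if } 1\le i\le q-2,\\
1   & \text{if } i=q-1,\\
i+1 & \text{if } q\le i\le p-1,\\
2   & \text{if } i=p,
\end{cases}
\]
and symmetrically $i^{[q][p]}=i+2$ for $1\le i\le q-1$, $q^{[q][p]}=2$, $i^{[q][p]}=i+1$ for $q+1\le i\le p-1$, and $p^{[q][p]}=1$.

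Next I would trace the orbit of $1$ under $[p][q]$. It runs through the odd elements of $\{1,\dots,q\}$ as $1\to 3\to\cdots\to q-2\to q$ (the hypothesis that $q$ is odd is what guarantees that we land on $q$ rather than on $q-1$ here), then unit-steps up via $q\to q+1\to\cdots\to p$, then jumps $p\mapsto 2$ and walks through the even elements $2\to 4\to\cdots\to q-1$, and finally closes the loop via $q-1\mapsto 1$. A simple count gives $\tfrac{q+1}{2}+(p-q)+\tfrac{q-1}{2}=p$, so every point of $\{1,\dots,p\}$ is visited exactly once and $[p][q]$ is a $p$-cycle. Under $[q][p]$ the analogous trace of the orbit of $1$ reads $1\to 3\to\cdots\to q-2\to q\to 2\to 4\to\cdots\to q-1\to q+1\to\cdots\to p\to 1$, once again of length $p$.

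The only delicate point is the parity bookkeeping. The oddness of $q$ is essential: it is exactly what ensures that the $+2$-walk inside $\{1,\dots,q\}$ fills the odd residues during one half of the orbit and the even residues during the other half, so that the two halves are glued together into a single cycle by the transit through the fixed segment $\{q+1,\dots,p\}$. If $q$ were even, the same calculation would decompose into two shorter cycles. Beyond that there is no real obstacle; the whole argument is a careful case analysis of the shifts followed by a short orbit count.
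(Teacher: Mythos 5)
Your proof is correct and follows essentially the same route as the paper: both arguments compute the one-line form of the product and read off that the orbit of $1$ runs through the odd residues of $\{1,\dots,q\}$, then the even ones, glued by the segment $\{q+1,\dots,p\}$, giving a single cycle of length $\tfrac{q+1}{2}+(p-q)+\tfrac{q-1}{2}=p$. The only difference is cosmetic: the paper verifies this for $[q][p]$ and then obtains $[p][q]$ as the conjugate $[q]^{-1}([q][p])[q]$, whereas you trace both orbits directly, which is equally valid.
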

\begin{proof}
Let $p$ and $q$ be as in the lemma. It is easy to verify that 
\begin{displaymath}
[q] [p] = (1,3,5,\dots,q-2,q,2,4,6,\dots,q-1,q+1,q+2,q+3,\dots,p),
\end{displaymath}
which is a cycle of length $p$. Because of
$[p] [q] = [q]^{-1}([q] [p]) [q]$, also $[p] [q]$ is a cycle of length $p$.
\end{proof}

\begin{lemma}\label{lemmaequation}
Let $p,q \in \mathbb{N}$ be primes such that $2 < q < p$ holds. Then 
\begin{equation}\label{equationlemmaequation}
[p]^{-x_2}[q]^{x_1}([p][q])^{x_2} = [q] = [q]^{x_1} [p]^{-x_2}([p][q])^{x_2}
\end{equation}
if and only if 
$(x_1 \equiv 1 \bmod q$ and $x_2 \equiv 0 \bmod p)$ or $(x_1 \equiv 0 \bmod q$ and $x_2 \equiv 1 \bmod p)$.
\end{lemma}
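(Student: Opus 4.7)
My plan is to first verify the ``if'' direction by direct substitution of $(x_1,x_2) \equiv (1,0)$ and $(x_1,x_2) \equiv (0,1)$ (modulo $q$ and $p$ respectively) into both equations, observing that each reduces to $[q] = [q]$ using that $[q]$ has order $q$ and that $[p]$ and $[p][q]$ both have order $p$ (the latter by Lemma~\ref{lemmaba}).

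For the ``only if'' direction, I would work in $S_p$ and proceed in three steps. First, equating the left-hand sides of the two equations and canceling $([p][q])^{x_2}$ on the right yields
\[
[p]^{-x_2}[q]^{x_1} = [q]^{x_1}[p]^{-x_2},
\]
so $[p]^{-x_2}$ and $[q]^{x_1}$ commute. Second, I split on whether $x_2 \equiv 0 \pmod p$. If so, then $[p]^{-x_2} = 1$ and $([p][q])^{x_2} = 1$, and the first equation collapses to $[q]^{x_1} = [q]$, giving $x_1 \equiv 1 \pmod q$. If not, then $[p]^{-x_2}$ is a $p$-cycle (using primality of $p$), so its centralizer in $S_p$ is exactly $\langle [p]\rangle$; hence $[q]^{x_1}\in\langle[p]\rangle$. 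But every non-identity element of $\langle[p]\rangle$ is a $p$-cycle, which fixes no point, whereas a non-trivial power of $[q]$ (a $q$-cycle with $q<p$) fixes $q+1,\dots,p$. So $[q]^{x_1}=1$, i.e.\ $x_1 \equiv 0 \pmod q$.

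In the remaining sub-case, plugging $x_1\equiv 0 \pmod q$ into the first equation gives $[p]^{-x_2}([p][q])^{x_2}=[q]$. Using the rewriting $([p][q])^{x_2} = [p]\cdot([q][p])^{x_2-1}\cdot[q]$, this simplifies to
\[
([q][p])^{x_2-1} = [p]^{x_2-1}.
\]
The key remaining observation is that $\langle[p]\rangle\cap\langle[q][p]\rangle=\{1\}$: both are cyclic of prime order $p$ in $S_p$, and if they shared any non-identity element they would coincide; but then $[q][p]\in\langle[p]\rangle$ would force $[q]\in\langle[p]\rangle$, which is false since $[q]$ fixes $q+1$ while no non-identity power of $[p]$ does. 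Hence both sides above equal $1$ and $x_2 \equiv 1 \pmod p$.

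The only non-routine point is the trivial-intersection argument for $\langle[p]\rangle$ and $\langle[q][p]\rangle$; everything else is standard manipulation of cycle orders and centralizers, combined with the primality assumptions on $p$ and $q$.
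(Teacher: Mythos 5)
Your proof is correct, and its overall skeleton matches the paper's: both derive the commutation relation $[p]^{-x_2}[q]^{x_1}=[q]^{x_1}[p]^{-x_2}$ by cancelling $([p][q])^{x_2}$, both show that exactly one of $x_1\equiv 0 \bmod q$, $x_2 \equiv 0 \bmod p$ must hold, and both then pin the remaining exponent to $1$ by reducing the first equation to $[q]^{x_1}=[q]$ resp.\ to an identity of the form $(\text{$p$-cycle})^{x_2-1}=[p]^{x_2-1}$. Where you genuinely diverge is in how the two nontrivial sub-steps are discharged. The paper argues computationally: assuming $x_1\not\equiv 0 \bmod q$ and $x_2\not\equiv 0\bmod p$, it raises both sides to the powers $x_1^{-1}\bmod q$ and $x_2^{-1}\bmod p$ to reduce to $[q][p][q]^{-1}=[p]$ and refutes that by evaluating at the point $p$; in the last step it again raises to the power $(x_2-1)^{-1}\bmod p$ to reduce to $[p]=[p][q]$. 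You instead invoke two structural facts: that the centralizer of a $p$-cycle in $S_p$ is $\langle[p]\rangle$ (so $[q]^{x_1}\in\langle[p]\rangle$, and a fixed-point count forces $[q]^{x_1}=1$), and that $\langle[p]\rangle$ and $\langle[q][p]\rangle$ are distinct subgroups of prime order $p$ and hence intersect trivially (so $[p]^{x_2-1}=([q][p])^{x_2-1}=1$). Both routes use the primality of $p$ and $q$ in an essential way and both lean on Lemma~\ref{lemmaba}; yours is slightly more conceptual and avoids the explicit point-chasing computation, at the cost of importing the standard (but unproved in the paper) centralizer fact, while the paper's version is entirely self-contained manipulation of cycles. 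Your variant with $[q][p]$ in place of the paper's $[p][q]$ in the final identity is immaterial, since both are $p$-cycles by Lemma~\ref{lemmaba}.
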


\begin{proof}
Let $p$ and $q$ be as in the lemma. By Lemma~\ref{lemmaba}, $[p][q]$ is a cycle of length $p$.
Therefore, $(x_1 \equiv 1 \bmod q$ and $x_2 \equiv 0 \bmod p)$ or $(x_1 \equiv 0 \bmod q$ and $x_2 \equiv 1 \bmod p)$ 
ensures that \eqref{equationlemmaequation} holds.

For the other direction, assume that $x_1$ and $x_2$ are such that \eqref{equationlemmaequation}  holds.
We obtain
\begin{equation} \label{eq-lemmaequation}
[p]^{-x_2}[q]^{x_1} = [q]^{x_1}[p]^{-x_2} .
\end{equation}
First of all we show that $x_1 \not \equiv 0 \bmod q$ implies $x_2 \equiv 0 \bmod p$. Assume that $x_1 \not \equiv 0 \bmod q$ and $x_2 \not\equiv 0 \bmod p$.
We will deduce a contradiction.
We first multiply both sides of \eqref{eq-lemmaequation} by $[p]^{x_2}$ and obtain
\begin{displaymath}
[q]^{x_1} = [p]^{x_2}[q]^{x_1} [p]^{-x_2} .
\end{displaymath}
Since $q$ is prime and $x_1 \not \equiv 0 \bmod q$ we can raise both sides to the power of $x_1^{-1} \bmod q$ and get
\begin{displaymath}
[q] = [p]^{x_2}[q] [p]^{-x_2}, 
\end{displaymath}
from which we obtain 
\begin{displaymath}
[q][p]^{x_2}[q]^{-1} = [p]^{x_2}
\end{displaymath}
by multiplying with $[p]^{x_2}[q]^{-1}$.
Since $x_2 \not \equiv 0 \bmod p$ and $p$ is prime, we can raise both sides to the power of $x_2^{-1} \bmod p$ which finally gives us
\begin{displaymath}
[q] [p] [q]^{-1} = [p] .
\end{displaymath}
By evaluating of both sides at position $p$ (recall that $p>q$) we get the contradiction
\begin{displaymath}
p^{[q] [p] [q]^{-1}} = p^{[p] [q]^{-1}} = 1^{[q]^{-1}} = q \neq 1 = p^{[p]} ,
\end{displaymath}
which shows that $x_1 \not \equiv 0 \bmod q$ implies $x_2 \equiv 0 \bmod p$.
Obviously $x_1 \equiv 0 \bmod q, x_2 \equiv 0 \bmod p$ is not a solution of  \eqref{equationlemmaequation}. This shows that $x_1 \not \equiv 0 \bmod q$ if and only if $x_2 \equiv 0 \bmod p$. It remains 
to exclude the cases $x_1 \equiv \gamma_1 \bmod q$ for $2 \leq \gamma_1 \leq q-1$ and $x_2 \equiv \gamma_2 \bmod p$ for $2 \leq \gamma_2 \leq p-1$. The equation
\begin{displaymath}
[p]^{-0} [q]^{x_1}([p] [q])^0 = [q] = [q]^{x_1} [p]^{-0}([p] [q])^0
\end{displaymath}
can only be true if $x_1 \equiv 1 \bmod q$. Hence it remains to show that the equation
\begin{displaymath}
[p]^{-x_2}[q]^0([p] [q])^{x_2} = [q] = [q]^0 [p]^{-x_2} ([p] [q])^{x_2}
\end{displaymath}
can only be true if $x_2 \equiv 1 \bmod p$. First we multiply with $([p] [q])^{-x_2}$ and get
\begin{displaymath}
[p]^{-x_2} = [q]([p][q])^{-x_2} .
\end{displaymath}
We obtain
\begin{equation*}
[p]^{-x_2} = [q][q]^{-1}[p]^{-1}([p][q])^{-x_2+1} = [p]^{-1}([p][q])^{-(x_2-1)} .
\end{equation*}
We multiply with $[p]$ and invert both sides:
\begin{displaymath}
[p]^{x_2-1} = ([p][q])^{x_2-1}
\end{displaymath}
Assume that this equation holds for some $x_2 \not \equiv 1 \bmod p$. By Lemma \ref{lemmaba} $[p][q]$ is a cycle of length $p$. Hence we can raise both sides to the power of $(x_2-1)^{-1} \bmod p$ and obtain $[p] = [p][q]$,  which is a contradiction since $[q] \neq 1$. This concludes the proof of the lemma.
\end{proof}

\begin{theorem}\label{knapsackn3}
The problem $3$-knapsack for symmetric groups is {\bf NP}-hard.
\end{theorem}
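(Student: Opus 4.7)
The plan is to reduce from the exact 3-hitting set problem X3HS (Problem~\ref{1-in-3-SAT+problem}). Given an instance with $A = \{1, \ldots, n\}$ and $\mathcal{B} = \{C_1, \ldots, C_m\}$, I would construct in polynomial time permutations $a, a_1, a_2, a_3 \in S_M$ (with $M$ polynomial in $n+m$) such that $a = a_1^{x_1} a_2^{x_2} a_3^{x_3}$ is solvable for some $x_1,x_2,x_3 \in \mathbb{N}$ iff the X3HS instance is a yes-instance. The heart of the construction is Lemma~\ref{lemmaequation}, which isolates a boolean choice from a single cyclic-group equation involving two suitably chosen primes.

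The first step is to set up ``element blocks''. For each $i \in A$, I pick two distinct primes $2 < q_i < p_i$, all $2n$ of these primes pairwise distinct (possible with primes of polynomial size by prime density). On a dedicated block $B_i$, the permutation $a_1$ acts as $[q_i]$, $a_2$ as $[p_i]^{-1}$, $a_3$ as $[p_i][q_i]$, and the target $a$ as $[q_i]$; all blocks are placed on pairwise disjoint coordinates. Interpreting ``$i \in A'$'' as the branch $(x_1 \equiv 1 \bmod q_i,\ x_2 \equiv 0 \bmod p_i)$ of Lemma~\ref{lemmaequation} and ``$i \notin A'$'' as the branch $(x_1 \equiv 0 \bmod q_i,\ x_2 \equiv 1 \bmod p_i)$, the Chinese Remainder Theorem guarantees that the bits in $x_1 \bmod q_i$ and $x_2 \bmod p_i$ can be chosen independently across elements, thereby encoding an arbitrary subset $A' \subseteq A$.

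A subtlety is that Lemma~\ref{lemmaequation} uses the \emph{same} exponent for $[p_i]^{-1}$ and for $[p_i][q_i]$, whereas in $3$-knapsack the exponents $x_2$ and $x_3$ are independent. To compensate I would add a linking block $B^*$ in which $a_1$ is the identity, $a_2$ is a product of disjoint cycles of lengths $p_1, \ldots, p_n$ (hence a permutation of order $P = \prod_i p_i$), $a_3 := a_2^{-1}$, and the target $a$ is the identity. Then the block equation $a_2^{x_2-x_3} = 1$ forces $P \mid x_2 - x_3$ and thus, by CRT, $x_2 \equiv x_3 \pmod{p_i}$ for every $i$; this is exactly what each block $B_i$ needs in order to fall under the hypothesis of Lemma~\ref{lemmaequation}.

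The main obstacle will be encoding the X3HS constraints: for each $C_j = \{i_1,i_2,i_3\} \in \mathcal{B}$, exactly one of the bits $y_{i_1},y_{i_2},y_{i_3}$ must equal $1$. For each $C_j$ I would introduce a dedicated ``set block'' $B_j^{\mathrm{set}}$, built on fresh primes and with carefully chosen actions of $a_1,a_2,a_3$, so that the block equation is solvable iff $y_{i_1}+y_{i_2}+y_{i_3} = 1$. A natural scheme is to pick a fresh prime $r_j > 3$ and to use cycles derived from $[q_{i_1}],[q_{i_2}],[q_{i_3}]$ acting on a common coordinate so that the net effect of $a_1^{x_1} a_2^{x_2} a_3^{x_3}$ inside $B_j^{\mathrm{set}}$ is a rotation of $\mathbb{Z}/r_j\mathbb{Z}$ by $y_{i_1}+y_{i_2}+y_{i_3}$ units; fixing the target inside this block to a rotation by exactly one unit then enforces the ``exactly one'' constraint. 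Pinning down the set blocks so that they interact cleanly with the element blocks $B_i$ and the linking block $B^*$ without creating spurious solutions, possibly by invoking a slight generalisation of Lemma~\ref{lemmaequation} together with additional CRT layers, is the delicate technical step. Once each block has been verified to enforce its local constraint, placing all blocks on disjoint coordinates and defining $a, a_1, a_2, a_3$ as the products of their block-local actions yields a polynomial-time reduction from X3HS to $3$-knapsack, and {\bf NP}-hardness follows.
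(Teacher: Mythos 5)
Your high-level plan is the same as the paper's: reduce from X3HS, use Lemma~\ref{lemmaequation} on per-element blocks to force each exponent to encode a bit, add cyclic components to synchronize exponents via CRT, and add per-set blocks enforcing the exactly-one condition. However, there are two genuine gaps. First, your element block imposes only \emph{one} of the two equations in the hypothesis of Lemma~\ref{lemmaequation}. The lemma's conclusion (the two-branch dichotomy) is derived by equating the two left-hand sides of \eqref{equationlemmaequation} to obtain the commutation relation $[p]^{-x_2}[q]^{x_1}=[q]^{x_1}[p]^{-x_2}$, which is the engine of the whole proof; a single equation $[q]^{x_1}[p]^{-x_2}([p][q])^{x_2}=[q]$ does not yield this, and you have given no argument that it has no spurious solutions. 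To get both equations you need a second $S_{p_i}$-component in which the roles of $a_1$ and $a_2$ are swapped (as in the paper's $v_{1,j},v_{2,j}$), and then the two components only instantiate the lemma with the \emph{same} parameters if additionally $x_1\equiv x_2\equiv x_3 \bmod p_i$ and $x_1\equiv x_2 \bmod q_i$. Your linking block only forces $x_2\equiv x_3 \bmod p_i$, so even after adding the second component the lemma would not apply. (The paper forces all of these with the components $\mathbb{Z}_{p_j}\times\mathbb{Z}_{p_j}\times\mathbb{Z}_{r_j}$.)

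Second, and more seriously, the set blocks --- the actual heart of the reduction --- are not constructed, and the ``natural scheme'' you sketch cannot work as described. In your encoding all element bits live in the single exponent $x_1$, read modulo different primes: $y_i$ is $x_1 \bmod q_i$ (or $x_2 \bmod p_i$). A cyclic component of order $r_j$ only sees the residues $x_1,x_2,x_3 \bmod r_j$, so the only ``sums'' it can compute are linear forms $c_1x_1+c_2x_2+c_3x_3 \bmod r_j$; it has no access to $x_1 \bmod q_{i_1}$, $x_1 \bmod q_{i_2}$, $x_1 \bmod q_{i_3}$ individually, so it cannot realize a rotation by $y_{i_1}+y_{i_2}+y_{i_3}$. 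The missing idea is a non-abelian \emph{bit-transfer} gadget: the paper uses, for each set $C_i$ and each $k\in\{1,2,3\}$, an equation of the shape $\mathsf{id}=[q_i]^{-z_1}([q_i][p_{\alpha(i,k)}])^{z_1}[p_{\alpha(i,k)}]^{-z_1}$ (realized in an $S_P$-component, with the exponents collapsed by the congruences) which forces the bit stored in $z_1 \bmod p_{\alpha(i,k)}$ to be copied into $z_k \bmod q_i$; only then can a single $\mathbb{Z}_{q_i}$-component check $z_1+z_2+z_3\equiv 1 \bmod q_i$ and, since each summand is $0$ or $1$, enforce the exactly-one condition. Note that this gadget exploits all three exponents $z_1,z_2,z_3$ as carriers for the three bits of a set, which is incompatible with your decision to store every bit in $x_1$ alone. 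Without this transfer mechanism the reduction does not go through.
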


\begin{proof}
We provide a log-space reduction from the {\bf NP}-complete problem X3HS (Problem \ref{1-in-3-SAT+problem}) to 3-knapsack.  Let
$A$ be a finite set and $\mathcal{B} \subseteq 2^A$ such that every $C \in \mathcal{B}$ has size 3.
W.l.o.g.~let $A = \{1,\ldots, m\}$ and let $\mathcal{B} = \{C_1, C_2, \ldots, C_d\}$ where $C_i = \{ \alpha(i,1), \alpha(i,2), \alpha(i,3)\}$
for a mapping $\alpha:\{1,\dots,d\} \times \{1,2,3\} \rightarrow \{1,\dots,m\}$.

Let $p_1,\dots,p_m,r_1,\dots,r_m,q_1,\dots,q_d$ be the first $2m+d$ odd primes such that $p_j > r_j > 2$ and $p_j > q_i > 2$ for $1 \leq i \leq d$ and $1 \leq j \leq m$ hold. Moreover let $P = \max_{1\leq j \leq m} p_j$. Intuitively, the primes $p_j$ and $r_j$ ($1 \leq j \leq m$)  belong to $j \in A$ and the prime $q_i$ ($1 \leq i \leq d$) belongs  to the set $C_i$. 

We will work in the group
$$
G = \prod_{j=1}^m \mathcal{V}_j \times \prod_{i=1}^d \mathcal{C}_i ,
$$
where $\mathcal{V}_j \le S_{4p_j + r_j}$ and $\mathcal{C}_i \le S_{q_i+3P}$. 
More precisely we have
\begin{equation*}
\mathcal{V}_j = S_{p_j} \times S_{p_j} \times \mathbb{Z}_{p_j} \times \mathbb{Z}_{p_j} \times \mathbb{Z}_{r_j} \text{ and } 
\mathcal{C}_i = \mathbb{Z}_{q_i} \times S_{P} \times S_{P} \times S_{P}  .
\end{equation*}
In the following, we denote the identity element of a symmetric group $S_m$ with $\mathsf{id}$ in order to not confuse it with
the generator of a cyclic group $\mathbb{Z}_m$.

We now define four group elements $g,g_1,g_2,g_3 \in G$. We write 
$g = (v_1, \ldots, v_m, c_1, \ldots c_d)$ and $g_k = (v_{k,1}, \ldots, v_{k,m}, c_{k,1}, \ldots, c_{k,d})$
with $v_j,v_{k,j} \in \mathcal{V}_j$ and $c_i, c_{k,i} \in \mathcal{C}_i$. These elements are defined as follows:
\begin{alignat*}{9}
v_j      & \ = \  ([r_j],         & \; & [r_j],         &  \;    &0, &\;  &0, &\;  &0) \qquad  &  c_i      & \ = \ (1,&\; &\mathsf{id},                 &\; &\mathsf{id},                &\; &\mathsf{id}) \\
v_{1,j} & \ = \  ([r_j],         & \; & [p_j]^{-1}, & \;    &1, &\;  &1,  &\;  &1) \qquad  & c_{1,i} & \ = \ (1, &\; &[q_i]^{-1},                   &\; &[p_{\alpha(i,2)}]^{-1}, &\; &[q_i] [p_{\alpha(i,3)}]) \\
v_{2,j} & \ = \  ([p_j]^{-1}, & \; & [r_j],         & \;   -&1, &\;  &0,  &\; -&1) \qquad & c_{2,i} & \ = \ (1, &\; &[q_i] [p_{\alpha(i,1)}], &\; &[q_i]^{-1},                   &\; &[p_{\alpha(i,3)}]^{-1}) \\
v_{3,j} & \ = \  ([p_j][r_j],  & \; & [p_j][r_j],  & \,    &0, &\;  -&1, &\;  &0) \qquad  & c_{3,i} & \ = \ (1, &\; &[p_{\alpha(i,1)}]^{-1}, &\;  &[q_i] [p_{\alpha(i,2)}], &\; &[q_i]^{-1})   
\end{alignat*}
We claim that there is a subset $A' \subseteq A$ such that $|A' \cap C_i|=1$ for every $1 \le i \le d$ if and only if there are $z_1,z_2,z_3 \in \mathbb{Z}$ with 
\begin{equation*}
g = g_1^{z_1}g_2^{z_2}g_3^{z_3}
\end{equation*}
in the group $G$. Due to the direct product 
decomposition of $G$ and the above definition of $g, g_1, g_2, g_3$,
the statement $g = g_1^{z_1}g_2^{z_2}g_3^{z_3}$ is equivalent to the conjunctions of the following statements (read the above definitions of the 
$v_j,v_{k,j}, c_i, c_{k,i}$ columnwise) for all $1 \le j \le m$ and $1 \le i \le d$:
\begin{enumerate}[(a)]
\item $[r_j] = [r_j]^{z_1} [p_j]^{-z_2} ([p_j][r_j])^{z_3}$
\item $[r_j] = [p_j]^{-z_1} [r_j]^{z_2} ([p_j][r_j])^{z_3}$
\item $z_1 \equiv z_2 \bmod p_j$
\item $z_1 \equiv z_3 \bmod p_j$
\item $z_1 \equiv z_2 \bmod r_j$
\item $1 \equiv z_1+z_2+z_3 \bmod q_i$ 
\item $\mathsf{id} = [q_i]^{-z_1} ([q_i][p_{\alpha(i,1)}])^{z_2} [p_{\alpha(i,1)}]^{-z_3}$
\item $\mathsf{id} = [p_{\alpha(i,2)}]^{-z_1} [q_i]^{-z_2}([q_i][p_{\alpha(i,2)}])^{z_3}$
\item $\mathsf{id} = ([q_i][p_{\alpha(i,3)}])^{z_1} [p_{\alpha(i,3)}]^{-z_2} [q_i]^{-z_3}$
\end{enumerate}
Recall that by Lemma~\ref{lemmaba}, $[p_j][r_j]$ and $[q_i][p_j]$ are cycles of length $p_j$. Due to the congruences in (c), (d), and (e), the conjunction of (a)--(i) is equivalent
to the conjunction of the following equations:
\begin{enumerate}[(a)] \setcounter{enumi}{9}
\item $z_1 \equiv z_2 \equiv z_3 \bmod p_j$
\item $z_1 \equiv z_2 \bmod r_j$
\item $[p_j]^{-z_1} [r_j]^{z_2} ([p_j][r_j])^{z_1} = [r_j] = [r_j]^{z_2} [p_j]^{-z_1} ([p_j][r_j])^{z_1}$
\item $1 \equiv z_1+z_2+z_3 \bmod q_i$ 
\item $\mathsf{id} = [q_i]^{-z_1} ([q_i][p_{\alpha(i,1)}])^{z_1} [p_{\alpha(i,1)}]^{-z_1}$
\item $\mathsf{id} = [p_{\alpha(i,2)}]^{-z_1} [q_i]^{-z_2}([q_i][p_{\alpha(i,2)}])^{z_1}$
\item $\mathsf{id} = ([q_i][p_{\alpha(i,3)}])^{z_1} [p_{\alpha(i,3)}]^{-z_1} [q_i]^{-z_3}$
\end{enumerate}
By Lemma~\ref{lemmaequation}, the conjunction of (j)--(p) is equivalent to the conjunction of the following statements:
\begin{enumerate}[(a)] \setcounter{enumi}{16}
\item ($z_1 \equiv z_2 \equiv z_3 \equiv 0 \bmod p_j$ and $z_1 \equiv  z_2 \equiv 1 \bmod r_j$) or \\ ($z_1  \equiv z_2 \equiv z_3 \equiv 1 \bmod p_j$ and $z_1 \equiv  z_2 \equiv 0 \bmod r_j$)
\item $1 \equiv z_1+z_2+z_3 \bmod q_i$ 
\item $\mathsf{id} = [q_i]^{-z_1} ([q_i][p_{\alpha(i,1)}])^{z_1} [p_{\alpha(i,1)}]^{-z_1}$
\item $\mathsf{id} = [p_{\alpha(i,2)}]^{-z_1} [q_i]^{-z_2}([q_i][p_{\alpha(i,2)}])^{z_1}$
\item $\mathsf{id} = ([q_i][p_{\alpha(i,3)}])^{z_1} [p_{\alpha(i,3)}]^{-z_1} [q_i]^{-z_3}$
\end{enumerate}
Let us now assume that $A' \subseteq A$ is such that $|A' \cap C_i|=1$ for every $1 \le i \le d$.
Let $\sigma : \{1,\ldots,m\} \to \{0,1\}$ such that $\sigma(j) = 1$ iff $j \in A'$. Thus, $\alpha(i,1)+\alpha(i,2)+\alpha(i,3)=1$ for all $1 \leq i \leq d$.
By the Chinese remainder theorem, we can set $z_1, z_2, z_3 \in \mathbb{Z}$ such that 
\begin{itemize}
\item $z_1 \equiv z_2 \equiv z_3 \equiv \sigma(j) \bmod p_j$ and $z_1 \equiv  z_2 \equiv 1-\sigma(j) \bmod r_j$ for $1 \le j \le m$,
\item $z_k \equiv \sigma(\alpha(i,k)) \bmod q_i$ for $1 \le i \le d$ and $1 \le k \le 3$.
\end{itemize}
Then (q) and (r) hold. For (s), one has to consider two cases: if $\sigma(\alpha(i,1)) = 0$, then 
$z_1 \equiv 0 \bmod q_i$ and $z_1 \equiv 0 \bmod p_{\alpha(i,1)}$.
Hence, the right-hand side of (s) evaluates to 
$$[q_i]^{-0} ([q_i][p_{\alpha(i,1)}])^{0} [p_{\alpha(i,1)}]^{-0} = \mathsf{id}.
$$ 
On the other hand, if $\sigma(\alpha(i,1)) = 1$, then
$z_1 \equiv 1 \bmod q_i$ and $z_1 \equiv 1 \bmod p_{\alpha(i,1)}$ and 
the right-hand side of (s) evaluates again to 
$$
[q_i]^{-1} [q_i][p_{\alpha(i,1)}] [p_{\alpha(i,1)}]^{-1} = \mathsf{id}.
$$
In the same way, one can show that also (t) and (u) hold.

For the other direction, assume that $z_1, z_2, z_3 \in \mathbb{Z}$ are such that (q)--(u) hold.
We define $A' \subseteq \{1,\ldots,m\}$ such that for every $1 \leq j \leq m$:
\begin{itemize}
\item $j \notin A'$ if $z_1 \equiv z_2 \equiv z_3 \equiv 0 \bmod p_j$ and $z_1 \equiv  z_2 \equiv 1 \bmod r_j$, and
\item $j \in A'$ if $z_1 \equiv z_2 \equiv z_3 \equiv 1 \bmod p_j$ and $z_1 \equiv  z_2 \equiv 0 \bmod r_j$.
\end{itemize}
Consider a set $C_i = \{ \alpha(i,1), \alpha(i,2), \alpha(i,3)\}$.
From the equations (s), (t), and (u) we get for every $1 \le i \le d$ and $1 \le k \le 3$:
\begin{itemize}
\item if $z_1 \equiv 0 \bmod p_{\alpha(i,k)}$ then $z_k \equiv 0 \bmod q_i$
\item if $z_1 \equiv 1 \bmod p_{\alpha(i,k)}$ then $z_k \equiv 1 \bmod q_i$
\end{itemize}
Together with $1 \equiv z_1+z_2+z_3 \bmod q_i$  and $q_i \geq 3$, this implies that there must be exactly one $k \in \{1,2,3\}$ such that
$z_1 \equiv 1 \bmod p_{\alpha(i,k)}$. Hence, for every $1 \le i \le d$ there is exactly one $k \in \{1,2,3\}$ such that $\alpha(i,k) \in A'$, i.e.,
$|\{ \alpha(i,1), \alpha(i,2), \alpha(i,3)\} \cap A'| = 1$.
\end{proof}
Theorem~\ref{theoremnpcomplete} is an immediate consequence of Corollaries~\ref{thm-rat-black-box} and~\ref{coro-abelian-subsetsum} and
Theorem~\ref{knapsackn3}.

Theorem~\ref{knapsackn3} leads to the question what the exact complexity of the  $2$-knapsack problem for symmetric groups is.
Recall that the complexity of Luks' 2-membership problem is a famous open problem in the algorithmic theory of permutation groups.
The restriction of the 2-membership problem to cyclic groups is easier:

\begin{theorem} \label{thm-2-knapsack}
The $2$-knapsack problem for symmetric groups can be solved in polynomial time.
\end{theorem}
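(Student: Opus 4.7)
The plan is to reformulate the equation $a_1^{i_1} a_2^{i_2} = a$ as the membership condition $a_1^{-i_1} a \in \langle a_2 \rangle$ and to decompose the latter cycle-by-cycle, reducing the problem to a system of modular congruences solvable by the Chinese Remainder Theorem. For $b \in S_m$, we have $b \in \langle a_2 \rangle$ if and only if three elementary conditions hold: (I) $b$ preserves every $a_2$-cycle $C$ setwise; (II) on each such $C$, the restriction $b|_C$ lies in the cyclic group $\langle a_2|_C \rangle$ (equivalently, $b|_C$ commutes with $a_2|_C$, since $a_2|_C$ is a full cycle on $C$); and (III) the induced shifts on the various cycles come from a common $i_2 \bmod n_2 := |a_2|$ via CRT. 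Setting $b = a_1^{-i_1} a$, each condition becomes a constraint on $i_1 \bmod n_1 := |a_1|$, which I will solve in turn.

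For condition (I), I will analyze it pairwise: for each $a_1$-cycle $C_1$ and each $a_2$-cycle $C$, the requirement $a_1^{i_1}(C \cap C_1) = a(C) \cap C_1$ (both viewed as subsets of $C_1$, on which $a_1$ acts as a cyclic shift) restricts $i_1 \bmod |C_1|$ to a coset of the cyclic stabilizer of $C \cap C_1$ under rotation of $C_1$, or is already infeasible. Intersecting these constraints across all $a_2$-cycles $C$ and combining via CRT across $a_1$-cycles $C_1$ yields a coset $i_1^* + d\cdot\mathbb{Z}/n_1$ of a cyclic subgroup of $\mathbb{Z}/n_1$. Since each $|C_1|, |C| \leq m$ and every modulus that arises is at most $m$, this step runs in polynomial time.

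Within this (I)-valid coset, I parametrize $i_1 = i_1^* + kd$. A short calculation using that both $i_1^*$ and $i_1^* + d$ satisfy (I) shows that $a_1^{d}$ itself preserves each $a_2$-cycle $C$; setting $u_C := (a_1^{-d})|_C \in S_C$ and $\sigma_0^C := (a_1^{-i_1^*} a)|_C \in S_C$, I obtain $(a_1^{-i_1} a)|_C = \sigma_0^C \cdot u_C^{\,k}$ in the paper's left-to-right composition. Condition (II) then reads $u_C^{\,k} \in (\sigma_0^C)^{-1}\,\langle a_2|_C \rangle$, which is either infeasible or defines a coset of the cyclic subgroup $\{k \in \mathbb{Z} : u_C^{\,k} \in \langle a_2|_C \rangle\}$ of $\mathbb{Z}$. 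Intersecting over all $a_2$-cycles $C$ yields a refined coset of a subgroup of $\mathbb{Z}/n_1$ for which (I) and (II) hold, and condition (III) is then a final CRT feasibility test on the induced shifts $s_C(k)$, which is again a coset condition in $k$.

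The main obstacle will be carrying out condition (II) efficiently: the order of $u_C$ may have super-polynomial bit-length, so one cannot enumerate all of its powers. The resolution is that $u_C$ and $a_2|_C$ both act on the single cycle $C$ of size $\leq m$, so the intersection $\langle u_C \rangle \cap \langle a_2|_C \rangle$ and the required coset representative can be computed in polynomial time by a standard per-prime analysis of cyclic permutation groups, where every prime-power modulus that arises is bounded by $|C| \leq m$. Composing all coset descriptions yields a generator-based description of the solution set in $\mathbb{Z}/n_1 \times \mathbb{Z}/n_2$, whose non-emptiness is then decidable in polynomial time.
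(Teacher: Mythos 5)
Your proof is essentially correct but takes a genuinely different route from the paper. The paper's argument is a two-line reduction: it passes to permutation matrices, invokes the Kronecker-product reformulation $(A_2^T \otimes I_m)^{x_2}(I_m \otimes A_1)^{x_1}\mathrm{vec}(I_m) = \mathrm{vec}(A)$ from \cite[Theorem~4]{bell} (where the two matrices commute), and then applies the polynomial-time orbit-membership result for commuting matrices \cite[Theorem~1.4]{babai2}. You instead give a direct, self-contained algorithm: rewriting the problem as $a_1^{-i_1}a \in \langle a_2\rangle$, decomposing along $a_2$-cycles, and propagating coset constraints on $i_1$ through conditions (I)--(III). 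Your structural claims all check out: the solution set of each stage is indeed a coset of a subgroup of $\mathbb{Z}/\mathrm{ord}(a_1)$, all moduli that arise are $m$-smooth, and the final shift-consistency test is a two-variable system of linear congruences. Two points deserve care. First, with the paper's left-to-right composition one gets $(a_1^{-i_1}a)|_C = u_C^{\,k}\cdot\sigma_0^C$ rather than $\sigma_0^C\cdot u_C^{\,k}$, so condition (II) is membership of $u_C^{\,k}$ in the \emph{right} coset $\langle a_2|_C\rangle(\sigma_0^C)^{-1}$; since $\langle a_2|_C\rangle$ is a subgroup this does not affect the coset structure of the solution set in $k$. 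Second, the step you flag as the main obstacle --- computing $\{k : u_C^{\,k}\in \langle a_2|_C\rangle(\sigma_0^C)^{-1}\}$ --- does go through, e.g.\ by rewriting membership in $\langle a_2|_C\rangle$ as commutation with the full cycle $a_2|_C$, so that the subgroup part becomes $\{k : u_C^{\,k} = v^{\,k}\}$ for $v$ a conjugate of $u_C$ and a representative is found by a Pohlig--Hellman-style digit-by-digit descent over the prime powers ($\le m$) dividing $\mathrm{ord}(u_C)$; this is the least-developed part of your sketch and would need to be written out. What each approach buys: the paper's proof is short but rests on a nontrivial external theorem about commuting integer matrices, while yours is longer but elementary and yields an explicit description of the whole solution set $(i_1,i_2)$ rather than just a decision procedure.
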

\begin{proof}
Let $a,a_1,a_2 \in S_m$ be permutations and let $A, A_1, A_2$ be the corresponding 
permutation matrices. Recall the definition of the Kronecker product of two $m$-dimensional
square matrices $A$ and $B$: $A \otimes B = (a_{i,j} \cdot B)_{1 \le i,j\leq m}$, so it is an $m^2$-dimensional square matrix
with the $m^2$ blocks $a_{i,j} \cdot B$ for $1 \le i,j\leq m$.
By \cite[Theorem~4]{bell}, the equation $a_1^{x_1}a_2^{x_2} = a$ is equivalent to
\begin{equation} \label{kronecker}
(A_2^T \otimes I_m)^{x_2}(I_m \otimes A_1)^{x_1}\text{vec}(I_m) = \text{vec}(A),
\end{equation}
where $\text{vec}(A) = (A_{1,1}, \ldots, A_{n,1}, A_{1,2}, \ldots, A_{n,2}, \ldots, A_{1,n}, \ldots, A_{n,n})^T$
is the $m^2$-di\-mensional column vector obtained from the matrix $A$ by stacking all columns of $A$
on top of each other and $I_m$ is the $m$-dimensional identity matrix.
The matrices $A_2^T \otimes I_m$ and $I_m \otimes A_1$ commute; see \cite{bell}.
By \cite[Theorem~1.4]{babai2} one can finally check in polynomial time whether
\eqref{kronecker} has a solution.
\end{proof}

\section{Application to intersection problems}

In this section we prove Theorems~\ref{theorem-intersection-CFL-DFA(k)} and \ref{theorem-intersection-CFL-DFA}.
The proofs of the two results are almost identical. Let us show how to deduce Theorem~\ref{theorem-intersection-CFL-DFA(k)} from
Theorem~\ref{theoremStrahler}. Let $\mathcal{G}$ be a grammar from $\text{CFG}(k)$ and let $\mathcal{A}_1, \ldots, \mathcal{A}_n$
be a list of group DFAs. Let $\mathcal{A}_i = (Q_i, \Sigma, q_{i,0}, \delta_i, F_i)$.
W.l.o.g.~we assume that the $Q_i$ are pairwise disjoint and let $Q = \bigcup_{1 \leq i \leq n} Q_i$. 
To every $a \in \Sigma$ we can associate a permutation $\pi_a \in S_Q$ by setting $\pi_a(q) = \delta_i(q,a)$ if $q \in Q_i$.
Let $\mathcal{G}' \in \text{CFG}(k)$ be the context-free grammar over the terminal alphabet $S_Q$ obtained by replacing in $\mathcal{G}$
every occurence of $a \in \Sigma$ by $\pi_a$.
Then, we have $L(\mathcal{G}) \cap \bigcap_{1 \le i \le n} L(\mathcal{A}_i) \neq \emptyset$ if and only
if there exists a permutation $\pi \in L(\mathcal{G}')$ such that $\pi(q_{i,0}) \in F_i$ for every $1 \leq i \leq n$.
We can nondeterministically guess such a permutation and check $\pi \in L(\mathcal{G}')$ in {\bf NP} using 
Theorem~\ref{theoremStrahler}. This proves the upper bound
from Theorems~\ref{theorem-intersection-CFL-DFA(k)}. The lower bound already holds for the case that $L(\mathcal{G}) = \Sigma^*$
\cite{BlondinKM16}.

The proof of the upper bound in Theorem~\ref{theorem-intersection-CFL-DFA} is identical to the above proof, 
except that we use Theorem~\ref{theoremPSPACEcomplete}. For the lower bound, notice that the {\bf PSPACE}-complete context-free membership problem 
for symmetric groups can be directly reduced to the intersection non-emptiness problem from Theorem~\ref{theorem-intersection-CFL-DFA} 
(several group DFAs and a single context-free grammar): Take a context-free gammar $\mathcal{G}$ over the terminal alphabet $S_m$.
Let $\{ \pi_1, \ldots, \pi_n \}$ be the permutations that appear as terminal symbols in $\mathcal{G}$.
Let $\mathcal{G}'$ be the context-free gammar obtained from $\mathcal{G}$ by replacing every occurrence of $\pi_i$ by a new terminal symbol $a_i$.
We construct $m$ group DFAs $\mathcal{A}_1, \ldots, \mathcal{A}_m$ over the terminal alphabet $\{a_1, \ldots, a_n\}$ and state set $\{1,\ldots,m\}$.
The initial and (unique) final state of $\mathcal{A}_i$ is $i$ and the transition function of every $\mathcal{A}_i$ is the same function $\delta$ with
$\delta(q,a_i) = q^{\pi_i}$ for $1 \leq q \leq m$. Then we have $L(\mathcal{G})$ contains the identity permutation if and only if 
$L(\mathcal{G}') \cap \bigcap_{1 \le i \le m} L(\mathcal{A}_i)$ is non-empty.

\bibliographystyle{plainurl}
\bibliography{bib}

\appendix

\section*{Appendix}

\section{Testing membership in CFG(k)}

\begin{lemma}\label{lemmacflk}
Let $\mathcal{G}=(N,T,P,S)$ be a context-free grammar. We have $L(\mathcal{G}) \neq \emptyset$ if and only if $\mathcal{G}$ has 
an acyclic derivation tree.
\end{lemma}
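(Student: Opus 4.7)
The plan is to prove the two directions separately, with the nontrivial direction being a standard pumping-down argument.

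For $(\Leftarrow)$, if $\mathcal{G}$ has an acyclic derivation tree $T$, then by our convention that in derivation trees the root is labelled with $S$ and all leaves are labelled with terminal symbols, the yield of $T$ is a word in $L(\mathcal{G})$, so $L(\mathcal{G}) \neq \emptyset$. This direction is immediate.

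For $(\Rightarrow)$, I would argue as follows. Pick any derivation tree $T$ of $\mathcal{G}$ (which exists because $L(\mathcal{G}) \neq \emptyset$). I will show that among all derivation trees for $\mathcal{G}$, the ones with the fewest nodes are acyclic. Suppose for contradiction that a minimum-size derivation tree $T$ is not acyclic. Then there is a path from the root to some leaf on which some nonterminal $A$ appears twice, at nodes $v_1$ strictly above $v_2$. Let $T_1$ and $T_2$ be the subtrees rooted at $v_1$ and $v_2$ respectively. Both $T_1$ and $T_2$ are derivation trees for $A$ (roots labelled $A$, leaves labelled by terminals, inner nodes respecting productions of $\mathcal{G}$). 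Since $T_2$ is a proper subtree of $T_1$, it has strictly fewer nodes. Replacing $T_1$ by $T_2$ inside $T$ yields a new tree $T'$ whose root is still labelled $S$, whose leaves are still labelled by terminals, and whose inner nodes still respect productions; hence $T'$ is a derivation tree. But $T'$ has strictly fewer nodes than $T$, contradicting minimality. Therefore every minimum-size derivation tree is acyclic.

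There is no real obstacle here; the only thing to be careful about is to verify that after the pumping-down substitution, the resulting tree is still a well-formed derivation tree in the sense of Section~\ref{sec-prel} (root labelled with the start nonterminal, all leaves labelled with terminals, every inner node together with its children matching a production). This holds because the replacement happens strictly inside $T$ and we paste in a subtree whose root label matches the label of the node it replaces, and whose leaves are already terminal. Iterating (or, equivalently, choosing a minimum-size derivation tree up front) completes the argument.
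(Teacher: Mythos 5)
Your proposal is correct and follows essentially the same route as the paper: the easy direction is immediate, and the hard direction is the standard pumping-down argument (the paper simply cites "as in the proof of the pumping lemma for context-free languages", while you spell it out via a minimum-size derivation tree). No issues.
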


\begin{proof}
Clearly, if there is an acyclic derivation tree, then there is a derivation tree and hence $L(\mathcal{G}) = \emptyset$.
For the reverse implication note that an arbitrary derivation tree can be made acyclic (as in the proof of the pumping lemma for context-free languages). 
\end{proof}
The notion of a {\em partial acyclic derivation tree} is defined as the notion of an acyclic derivation tree except that leafs may be labelled with terminals or nonterminals.
 
\begin{theorem}
For every fixed $k \geq 1$, the problem of checking whether a given context-free grammar belongs to CFG$(k)$ is in {\bf coNP}.
\end{theorem}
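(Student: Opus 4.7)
The plan is to show that the complement problem---deciding whether $\mathcal{G}\notin\text{CFG}(k)$---lies in {\bf NP}. By definition, $\mathcal{G}\notin\text{CFG}(k)$ iff $\mathcal{G}$ admits an acyclic derivation tree with Horton--Strahler number at least $k+1$, and I will produce a polynomial-size certificate of this fact.

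The first step is the combinatorial observation that any such witness can always be compressed to polynomial size for fixed $k$. Let $T$ be any acyclic derivation tree of $\mathcal{G}$ with $\text{hs}(T)\ge k+1$. Walking from a leaf of $T$ upward to the root, the Horton--Strahler number of the current subtree starts at $0$ and weakly increases with each step, so there must be a node $v$ whose subtree $T_v$ satisfies $\text{hs}(T_v)=k+1$ exactly. Since $T$ is acyclic, $T_v$ has height at most $|N|$, and Lemma~\ref{lemma-strahler} therefore bounds $|T_v|$ by $2\cdot|N|^{k+1}$, which is polynomial in $|\mathcal{G}|$ for fixed $k$. Let $\pi$ be the root-to-$v$ path in $T$; again by acyclicity $|\pi|\le|N|$. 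The certificate is the partial acyclic derivation tree $T'$ formed by $\pi$, with the off-path siblings along $\pi$ kept as nonterminal leaves, and $T_v$ attached at $v$. A short induction up $\pi$ shows $\text{hs}(T')=k+1$: at each ancestor of $v$ on $\pi$ the two children have Horton--Strahler numbers $k+1$ and $0$, so the maximum is $k+1$. The overall size of $T'$ is $O(|N|^{k+1})$.

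Verification of such a certificate is straightforward apart from one point. We check that $T'$ is a syntactically well-formed partial acyclic derivation tree rooted at $S$, and compute $\text{hs}(T')$ directly. The subtle point---and what I expect to be the main conceptual step---is that every nonterminal leaf $B$ of $T'$ must really be expandable into a full derivation preserving global acyclicity; these extensions may be exponentially large, so we cannot write them into the certificate. The key to handling this in polynomial time is Lemma~\ref{lemmacflk}. Letting $N'_B\subseteq N$ be the set of nonterminal labels on the root-to-$B$ path in $T'$, the required extension exists iff there is an acyclic derivation from $B$ avoiding $N'_B$. Applying Lemma~\ref{lemmacflk} to the subgrammar obtained from $\mathcal{G}$ by deleting every production that uses a nonterminal from $N'_B$, this reduces to a plain non-emptiness test starting from $B$, which is decidable in polynomial time. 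Since $T'$ has only polynomially many nonterminal leaves, verification is polynomial overall, and the whole non-membership test is in {\bf NP}, so membership in $\text{CFG}(k)$ is in {\bf coNP}.
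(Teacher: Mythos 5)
Your proof is correct and reaches the same {\bf coNP} bound, but your certificate is organized differently from the paper's. The paper splits the complement into two cases: either there is a \emph{small} acyclic derivation tree (at most $2|N|^k$ nodes) with Horton--Strahler number exceeding $k$, which can be guessed and checked outright, or there is an acyclic derivation tree with \emph{more} than $2|N|^k$ nodes, whose mere existence already forces Horton--Strahler number above $k$ by the contrapositive of Lemma~\ref{lemma-strahler}; the latter is certified by a partial acyclic derivation tree of size just above $2|N|^k$. You instead give a single uniform certificate: a root-to-$v$ path whose off-path siblings are truncated to nonterminal leaves, together with a fully expanded subtree $T_v$ with $\text{hs}(T_v)=k+1$, bounded in size by $2|N|^{k+1}$ via Lemma~\ref{lemma-strahler}. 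This avoids the case distinction at the cost of a slightly larger (still polynomial) witness, and it makes the witnessed Horton--Strahler value explicit rather than inferring it from the size of the tree. Crucially, both arguments rest on the same verification device, which you correctly identified as the main point: each dangling nonterminal leaf must be shown extendable to an acyclic derivation avoiding the nonterminals on its root path, which reduces to a non-emptiness test for a subgrammar via Lemma~\ref{lemmacflk}. So the core of the two proofs coincides.

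One small repair: your justification for the existence of a node $v$ with $\text{hs}(T_v)=k+1$ exactly is not quite right as stated. Walking up from an \emph{arbitrary} leaf, the Horton--Strahler number of the current subtree is indeed weakly increasing, but it can jump by more than one in a single step (when the sibling subtree has a much larger Horton--Strahler number), so intermediate values need not be attained. Either walk \emph{down} from the root, always moving to a child of maximal Horton--Strahler number (then the value drops by at most one per step and must pass through $k+1$), or take $v$ minimal among nodes with $\text{hs}(T_v)\ge k+1$: both of its children then have value at most $k$, which forces $\text{hs}(T_v)\le k+1$ and hence equality. The claim itself is true, so this does not affect the validity of your argument.
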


\begin{proof}
Let $\mathcal{G}=(N,\Sigma,P,S)$ be a context-free grammar in Chomsky normal form. We have $\mathcal{G} \in \text{CFG}(k)$ if and only if for every acyclic derivation tree the Horton-Strahler number is at most $k$. By this we obtain $\mathcal{G} \not \in \text{CFG}(k)$ if and only if there is an acyclic derivation tree with Horton-Strahler number greater than $k$.  By Lemma~\ref{lemma-strahler}
this holds if and only if one of the following conditions holds:
\begin{itemize}
\item There is an acyclic derivation tree with at most $2|N|^k$ nodes and Horton-Strahler number greater than $k$.
\item There is an acyclic derivation tree with more than $2|N|^k$ nodes. 
\end{itemize}
The second statement holds if and only if there is a partial acyclic derivation tree $T$ with $2|N|^k < |T| \leq 2|N|^k+2$ ($|T|$ denotes the number of nodes of $T$)
and for every leave $v$ in $T$ that is labelled with a nonterminal $A$ there is an acyclic derivation tree $T_v$ of arbitrary size whose root is labelled with $A$
and which contains no nonterminal that has already appeared on the path from the root of $T$ to node $v$. This holds, since in an acyclic derivation tree with more than
 $2|N|^k$ nodes we can remove subtrees such that the resulting partial acyclic derivation tree $T'$ satisfies $2|N|^k < |T'| \leq 2|N|^k+2$.

These conditions can be checked in {\bf NP} as follows:
First, we guess an acyclic derivation tree with at most $2|N|^k$ nodes and compute in polynomial time its Horton-Strahler number $s$. If $s > k$ then we accept.
If $s \leq k$, then we guess a partial acyclic derivation tree $T$ with $2|N|^k < |T| \leq 2|N|^k+2$.  
For every leaf $v$ of $T$ that is labelled with a nonterminal $A$ we define the subgrammar $G_v = (N_v, T, P_v, A)$: let $A_1,\dots,A_d$ ($A_1 = S$, $A_d = A$) 
be the nonterminals that appear on the path from the root of $T$ to the leaf $v$. Then we set $N_v = N \setminus \{A_1,\dots,A_{d-1}\}$. Moreover, $P_v$ is obtained from $P$ by removing every production that contains one of the nonterminals $A_1,\dots,A_{d-1}$. Finally the algorithm verifies deterministically  in polynomial time whether $G_v$ has an acyclic derivation tree $T_v$ of arbitrary size that is rooted in $A$. By Lemma~\ref{lemmacflk} this holds if and only if $L(G_v) \neq \emptyset$.
\end{proof}

\end{document}